\newtheorem{thm}{Theorem}[section]
\newtheorem{prop}[thm]{Proposition}
\newtheorem{cor}[thm]{Corollary}
\newtheorem{lem}[thm]{Lemma}
\newtheorem*{claim}{Claim}
\newtheorem*{conv}{Convention}
\theoremstyle{definition}
\newtheorem{defn}[thm]{Definition}
\theoremstyle{remark}
\newtheorem{rem}[thm]{Remark}
\newtheorem{example}[thm]{Example}
\newcommand{\G }{\mathscr{G} (G, S\cup \mathcal P)}
\newcommand{\Gx }{\mathscr{G} (G, S)}
\newcommand{\dxp }{d_{S\cup{\mathcal P}}}
\newcommand{\dx }{d_S}
\newcommand{\diam }[1]{{\|#1\|}}
\newcommand{\proj }{{{\Pi}}}
\newcommand{\lab }{\textbf{Lab}}
\newcommand{\len }{\ell}
\newcommand{\card}[1]{\lvert#1\rvert}
\begin{document}

\title{Combination of fully quasiconvex subgroups and its applications}

%    Information for first author
\author{Wen-yuan Yang}

%    Address of record for the research reported here
\address{Le Département de Mathématiques de la Faculté des
Sciences d'Orsay, Université Paris-Sud 11, France}
%    Current address
%\curraddr{Le Département de Mathématiques de la Faculté des
%Sciences d'Orsay, Université Paris-Sud 11, France}
\email{yabziz@gmail.com}
%    \thanks will become a 1st page footnote.
\thanks{This research is supported by the ERC starting grant GA 257110 ”RaWG”.}

%    Information for second author
% \author{Author Two}
% \address{Mathematical Research Section, School of Mathematical Sciences,
% Australian National University, Canberra ACT 2601, Australia}
% \email{two@maths.univ.edu.au}
% \thanks{Support information for the second author.}

%    General info
\subjclass[2000]{Primary 20F65, 20F67}

\date{Dec 18, 2012}

\dedicatory{}

\keywords{combination theorems, relatively quasiconvex groups,
separability of double cosets, admissible paths}

\begin{abstract}
In this paper, we state two combination theorems for relatively
quasiconvex subgroups in a relatively hyperbolic group. Applications
are given to the separability of double cosets of certain relatively
quasiconvex subgroups and the existence of closed surface subgroups
in relatively hyperbolic groups.
\end{abstract}

\maketitle

\section{Introduction}
The combination theorems for relatively hyperbolic groups have been
developed by many authors, and possessed wide applications in the
theory of relatively hyperbolic groups. See \cite{BF}, \cite{Dah},
\cite{Ali}, \cite{MR} and \cite{Gau}, just to name a few. By
contrast, the problems for combining relatively quasiconvex
subgroups in a relatively hyperbolic group are less well-studied.
This kind of combination results also afford many important
applications in constructing good subgroups. For instance, the idea
of combining compact surfaces with parallel boundaries was initiated
in Freedman-Freedman \cite{FF} and further explored by Cooper, Long
and Reid(\cite{CLR}, \cite{CL}) to obtain closed surfaces in cusped
hyperbolic 3-manifolds.

In a relatively hyperbolic group $G$, one can ask the following
questions for the combination of relatively quasiconvex subgroups
$H, K$:

\begin{enumerate}
\item
Under what conditions the amalgamation $H \star_C K$ over $C = H
\cap K$ is embedded in $G$ as a relatively quasiconvex subgroup?

\item
Under what conditions the HNN extension $H \star_{Q_1 \sim Q_2}$
over isomorphic subgroups $Q_1, Q_2$ is embedded in $G$ as a
relatively quasiconvex subgroup?
\end{enumerate}

Recall that a relatively quasiconvex subgroup is itself a relatively
hyperbolic group. Hence whether the combination of two relatively
quasiconvex subgroups is relatively hyperbolic and also embedded
into the ambient group $G$ give two theoretical obstructions for
solving the above problems.

In hyperbolic groups, Gitik showed that these two obstructions can
be virtually eliminated in virtue of a separability property: two
quasiconvex subgroups, if their intersection is separable,  contain
(many) finite index subgroups to generate a quasiconvex amalgamation
\cite{Gitik}. Along this line, Martinez-Pedroza proved a combination
theorem in a relatively hyperbolic group for combining relatively
quasiconvex subgroups over a parabolic subgroup \cite{MarPed2}. In
the other direction,  Baker-Cooper showed that a pair of
geometrically finite subgroups with compatible parabolic subgroups
can be virtually amalgamated \cite{BC}.

In the present paper, we shall show two combination theorems in the
same spirit for relatively quasiconvex subgroups with fully
quasiconvex subgroups. Some applications of our combination results
are given to the separability of double cosets and the existence of
closed surface group. We now start by stating the combination
theorems.

\paragraph{1. \textbf{Combination theorems}}
Let $G$ be a finitely generated group hyperbolic relative to a
collection of subgroups $\mathbb P$. A \textit{fully quasiconvex
subgroup} $H$ is relatively quasiconvex in $G$ such that $P^g \cap
H$ is either finite or of finite index in $P^g$ for each $g \in G, P
\in \mathbb P$. Fully quasiconvex subgroups generalize quasiconvex
subgroups in hyperbolic groups and are receiving a great deal of
attention in the study of relatively quasiconvex subgroups, see
\cite{ManMar}, \cite{CDW} and \cite{Wise} and \cite{GePo4}.

Our first result is to deal with the (virtual) amalgamation of a
relatively quasiconvex subgroup with a fully quasiconvex subgroup,
generalizing results of Gitik in the hyperbolic case \cite{Gitik}.

\begin{thm}[Virtual amalgamation] \label{freeproduct}
Suppose $H$ is relatively quasiconvex and $K$ fully quasiconvex in a
relatively hyperbolic group $G$. Then there exists a constant
$D=D(H,K) > 0$ such that the following statements are true.

\begin{enumerate}
\item
Let $\dot H \subset H$ and $\dot K \subset K$ be such that $\dot H
\cap \dot K = C$ and $d(1, g) > D$ for any $g \in \dot H \cup \dot K
\setminus C$, where $C = H \cap K$. Then $\langle \dot H, \dot
K\rangle= \dot H \star_C \dot K$.

\item
If $\dot H, \dot K$ are, in addition, relatively quasiconvex, then
$\langle \dot H, \dot K\rangle$ is relatively quasiconvex.
\item
Moreover, every parabolic subgroup in $\langle \dot H, \dot
K\rangle$ is conjugated into either $\dot H$ or $\dot K$.

\end{enumerate}
\end{thm}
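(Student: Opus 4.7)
The plan is to build, for every reduced word in the prospective amalgamated free product, a path in the relative Cayley graph $\mathscr{G}(G, S\cup \mathcal P)$ that is forced to be a quasi-geodesic (in fact, an admissible path) as long as the threshold $D=D(H,K)$ is chosen large enough in terms of the (relative) quasiconvexity constants of $H$ and $K$. All three conclusions flow from this single construction.

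For part (1), given a reduced word $w=g_1g_2\cdots g_n$ with consecutive $g_i$ alternating in $\dot H\setminus C$ and $\dot K\setminus C$, I would set $x_i = g_1\cdots g_i$ and let $\gamma_i$ be a relative geodesic from $x_{i-1}$ to $x_i$. Each $\gamma_i$ lies in a uniform neighborhood of the coset $x_{i-1}\dot H$ or $x_{i-1}\dot K$, by relative quasiconvexity of $H,K$, and has relative length at least $D$ by hypothesis on $\dot H,\dot K$. The key input, analogous to Gitik's ping-pong in the hyperbolic setting and Martinez-Pedroza's projection lemma in \cite{MarPed2}, is that if the projection of the endpoint region of $\gamma_{i-1}$ onto the coset carrying $\gamma_{i+1}$ has diameter bounded by some $D_0$, then choosing $D>D_0$ makes the concatenation $\gamma_1\cdots\gamma_n$ satisfy a local-to-global quasi-geodesic criterion in the sense of admissible paths. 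In particular its endpoints are distinct, so $w\ne 1$ in $G$; since every reduced word is nontrivial, the canonical map $\dot H\star_C\dot K\to G$ is injective.

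The main obstacle, and the heart of the argument, is precisely this projection bound. Here full quasiconvexity of $K$ plays a crucial role: the cosets $gK$ have bounded-diameter coarse intersection with peripheral cosets (since $K$ meets each $P^g$ either finitely or with finite index), so parabolic excursions inside $gK$ can be controlled. Combined with the relative quasiconvexity of $H$ on the other side, one obtains the required projection constant $D_0$, which defines $D=D(H,K)$.

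Once this admissible path machinery is in place, parts (2) and (3) follow with moderate extra work. For part (2), relative quasiconvexity of $\langle\dot H,\dot K\rangle$ reduces to showing that any relative geodesic from $1$ to an element $g=g_1\cdots g_n\in\langle\dot H,\dot K\rangle$ stays in a uniform neighborhood of the subgroup; the admissible path $\gamma_1\cdots\gamma_n$ is a quasi-geodesic in the same neighborhood (each $\gamma_i$ is near the translate $x_{i-1}\dot H$ or $x_{i-1}\dot K\subset \langle\dot H,\dot K\rangle$), and then standard stability of quasi-geodesics in a relatively hyperbolic group gives the conclusion. For part (3), suppose $P^g\cap \langle\dot H,\dot K\rangle$ is infinite for some $P\in\mathbb P$, $g\in G$. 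Infinite parabolic elements are hyperbolic in the sense that they have well-defined peripheral cosets; a maximal parabolic in $\langle\dot H,\dot K\rangle$ corresponds, via Bass--Serre theory for $\dot H\star_C\dot K$, to a stabilizer of a vertex or edge in the associated tree, and hence after conjugation in $\langle\dot H,\dot K\rangle$ lies in $\dot H$, $\dot K$, or $C\subset \dot H\cap\dot K$. Using the admissible path estimate, no infinite parabolic can fit nontrivially across syllables of a reduced word, which gives the claimed dichotomy.
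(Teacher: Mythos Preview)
Your outline has the right overall shape---build a path from an alternating word and show it is a quasigeodesic via the admissible-path machinery---but several load-bearing steps are either misplaced or missing.

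First, the setting. The paper works in the ordinary Cayley graph $\mathscr{G}(G,S)$, not the relative one, and takes the contracting system to be $\mathbb X=\{gK:g\in G\}$; full quasiconvexity of $K$ is used precisely to make these cosets $(\mu,\epsilon)$-contracting (Lemma~\ref{projlem}). Your proposal instead builds relative geodesics in $\mathscr{G}(G,S\cup\mathcal P)$ and asserts ``relative length at least $D$'', but the hypothesis $d(1,g)>D$ is a bound in the word metric $d_S$, not the relative metric, so this does not follow. Moreover, relative quasiconvexity of $H$ does not say a relative geodesic lies near an $H$-coset; it controls transition points of absolute quasigeodesics.

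Second, and more seriously, you have not addressed the orthogonality condition in Definition~\ref{AdmDef}. An admissible path requires the segments between contracting subsets to be $\tau$-orthogonal to those subsets, i.e.\ to have bounded penetration into their neighborhoods. Your ``projection of the endpoint region of $\gamma_{i-1}$ onto the coset carrying $\gamma_{i+1}$'' is a different condition (bounded projection between consecutive contracting sets), and it is not what is needed. In the paper the $H$-segments play the role of the $q_i$, and the crucial input is Lemma~\ref{ksideproj}: if one chooses each $h_i\in H$ to minimize $d(1,Ch_iC)$ over its double coset, then a geodesic $[1,h_i]$ has uniformly bounded intersection with $N_U(K)$ and $N_U(h_iK)$. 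Without this double-coset minimization the path need not be admissible at all, regardless of how large $D$ is.

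Third, your argument for part~(3) via Bass--Serre theory is incomplete. Bass--Serre theory gives that any subgroup of $\dot H\star_C\dot K$ acting with a global fixed point on the tree is conjugate into a vertex group, but an infinite parabolic subgroup could a priori contain an element acting hyperbolically on the tree. Ruling this out is exactly the content of Lemma~\ref{amalgparabolic}: one shows, using the admissible-path estimates and the orthogonality lemma again, that no element $g\in\langle\dot H,\dot K\rangle\setminus(\dot H\cup\dot K)$ can lie in a conjugate $P^f$, by trapping a long normal path near $fP$ and deriving a contradiction with $\len(q_i)>D$. This geometric step is not replaced by the abstract tree argument you sketch.
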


\begin{rem}
Note that maximal parabolic subgroups are fully quasiconvex. In the
case that $K$ is a maximal parabolic subgroup, it suffices to assume
$d(1, g) >D$ for any $g \in \dot K \setminus C$ in Theorem
\ref{freeproduct}. This generalizes a result in \cite{MarPed2}.
\end{rem}

We also obtain a combination theorem for glueing two parabolic
subgroups of a relatively quasiconvex subgroup such that the HNN
extension is relatively quasiconvex. Let $\Gamma^g = g\Gamma g^{-1}$
be a conjugate of a subgroup $\Gamma$ in $G$.
\begin{thm} [HNN extension] \label{hnn}
Suppose $H$ is relatively quasiconvex in a relatively hyperbolic
group $G$. Let $P \in \mathbb P$ and $f \in G$ such that $Q = P \cap
H, Q' = Q^f$ are non-conjugate maximal parabolic subgroups in $H$.
Then there exists a constant $D=D(H, P, f) > 0$ such that the
following statements are true.

\begin{enumerate}
\item
Suppose there exists $c \in P$ such that $Q^c=Q$ and $d(1, c) > D$
for any $g \in cQ$. Let $t=fc$. Then $\langle H, t\rangle =
H\star_{Q^t = Q'}$ is relatively quasiconvex.

\item
Moreover, every parabolic subgroup in $\langle H, t\rangle$ is
conjugated into $H$.
\end{enumerate}
\end{thm}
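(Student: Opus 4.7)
The plan is to parallel the proof of Theorem~\ref{freeproduct}, adapted to the HNN setting. Observe that $t=fc$ realises the identification $tQt^{-1}=fcQc^{-1}f^{-1}=fQf^{-1}=Q'$ inside $G$, so the HNN presentation $H\star_{Q^t=Q'}$ admits a natural homomorphism to $\langle H,t\rangle\le G$. I would prove (1), together with the claim that this homomorphism is an isomorphism, simultaneously, by showing that every HNN-reduced word labels an admissible path, and hence a relative quasi-geodesic, in the relative Cayley graph $\mathscr{G}(G,S\cup\mathbb P)$. The largeness of $D$ is exactly what ensures that the peripheral excursion through $P$ at each stable letter is long enough to trigger admissibility.

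More concretely, take a reduced word
\[
w = h_0\, t^{\varepsilon_1}\, h_1\, t^{\varepsilon_2}\cdots t^{\varepsilon_n}\, h_n,
\]
subject to the usual Britton conditions ($h_i\notin Q$ when $\varepsilon_i=1$, $\varepsilon_{i+1}=-1$; $h_i\notin Q'$ when $\varepsilon_i=-1$, $\varepsilon_{i+1}=1$). Build a broken path $\gamma_w$ in $\mathscr{G}(G,S\cup\mathbb P)$ by concatenating a geodesic for each $h_i$ with an edge labelled $t^{\pm1}$. To prove admissibility, one needs that the $\mathbb P$-transition at each stable letter spans a long subsegment of a single coset of some conjugate of $P$. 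Since $t=fc$ factors as $f$ followed by $c\in P$, the transition across each $t$-edge in $\gamma_w$ contains a subpath whose label lies in the $Q$-coset $cQ$, after absorbing the near-$Q$/near-$Q'$ tails of the neighbouring $h_i$-pieces and conjugating by $f$. The hypothesis $d(1,g)>D$ for every $g\in cQ$ then forces the peripheral span at each transition to be at least $D$ minus a bounded error coming from $f$ and from the quasiconvexity constants of $H$. Choosing $D=D(H,P,f)$ larger than the admissibility threshold (which depends only on the relative hyperbolicity data of $(G,\mathbb P)$, the quasiconvexity of $H$, and $d(1,f)$) plus this error delivers admissibility.

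Once $\gamma_w$ is admissible, the admissible-path lemma supplies a uniform relative quasi-geodesic. This yields $w\neq 1$ in $G$ for $n\ge 1$, so $\langle H,t\rangle$ really is $H\star_{Q^t=Q'}$, and it places the orbit $\langle H,t\rangle\cdot 1$ in a uniform neighbourhood of a relatively quasiconvex set built from translates of cosets of $H$ and $Q$; hence $\langle H,t\rangle$ is relatively quasiconvex, which is (1). For (2), a parabolic element $g\in\langle H,t\rangle$ has bounded orbits in $\mathscr{G}(G,S\cup\mathbb P)$, and the quasi-geodesic estimate above forces its HNN-length to be zero, so $g$ lies in a conjugate of $H$. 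The non-conjugacy of $Q,Q'$ in $H$ rules out accidental parabolic identifications across a stable letter, and the conclusion for whole parabolic subgroups follows from the standard classification for $H$.

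I expect the principal obstacle to be the admissibility verification at each $t$-transition: one must simultaneously control the $P$-projection supplied by $c$, absorb into a single error constant both the bounded contribution of $f$ and the bounded endpoint deviations of the $H$-pieces near $Q$ or $Q'$, and exploit the non-conjugacy of $Q,Q'$ in $H$ to rule out degenerate cancellations at consecutive transitions. Identifying the correct $D(H,P,f)$ reduces to packaging the quasiconvexity constants of $H$, the relative hyperbolicity data of $(G,\mathbb P)$, and a bounded-projection estimate depending on $f$; once these constants are isolated, the admissible-path machinery carries the rest of the argument.
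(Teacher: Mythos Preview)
Your overall strategy for (1) matches the paper's: build a path from the HNN normal form, verify it is admissible with the peripheral excursion at each stable letter supplying the long contracting piece, and read off injectivity and relative quasiconvexity from Proposition~\ref{admissible} and Corollary~\ref{maincor}. You have also correctly anticipated the two technical ingredients the paper isolates: Lemma~\ref{distneighbor} (the non-conjugacy of $Q,Q'$ forces consecutive peripheral cosets $g_{i-1}P,\,g_iP$ to be distinct) and the error constant $\kappa(H,f^{-1}P,M)+\kappa(H,P,M)$ coming from absorbing the tails of the $h_i$-pieces (your ``near-$Q$/near-$Q'$ tails'').

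There is, however, a real gap in where you are running the argument. The admissible-path machinery of Section~2 is set up in the \emph{absolute} Cayley graph $\Gx$, with $\mathbb X=\{gP\}$ as contracting subsets; Condition~(2) of Definition~\ref{AdmDef} requires $\len(p_i)>\lambda D+c$, which is vacuous for a path lying in a parabolic coset in the \emph{relative} graph (such cosets have diameter~$1$ there). So you cannot apply Proposition~\ref{admissible} to a path in $\mathscr G(G,S\cup\mathbb P)$ as you propose. The paper handles this by first writing the normal path in the relative graph, then \emph{truncating} each relative piece at its first and last intersection with the adjacent coset $g_iP$ and \emph{lifting} the resulting pieces to $\Gx$ (Lemmas~\ref{liftpath}, \ref{relorthogonal}, \ref{hnnadm}). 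The lifted $q_i'$ become $(\lambda,(\lambda+2)|f|)$-quasigeodesics orthogonal to $g_{i-1}P$ and $g_iP$, and the truncated $p_i'$ are geodesics in $g_iP$ of length at least $D-\kappa(H,f^{-1}P,M)-\kappa(H,P,M)$. Only then does the admissible-path proposition apply.

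Your argument for (2) also does not close as written. The quasi-geodesic conclusion from admissible paths is in the $d_S$-metric, and a parabolic element can have arbitrarily large $d_S$-length, so ``bounded orbits in the relative graph'' does not by itself force the HNN-length to vanish. The paper instead argues by contradiction: if $g=fpf^{-1}$ with $p\in P$, pass to a high power so that a geodesic $\alpha$ in $\Gx$ from $1$ to $g^n$ has a long subsegment in $N_U(fP)$; the fellow-traveller statement in Proposition~\ref{admissible} then forces two consecutive peripheral pieces $p_{i-1}',p_i'$ of the truncation path into $N_U(fP)$, whence (for $D>\nu(U)$) $g_{i-1}P=g_iP=fP$, contradicting Lemma~\ref{distneighbor}.
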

\begin{rem}
The sufficiently long element $c$ exists when $Q$ is normal and of
infinite index in $P$. In particular, this holds for the groups
hyperbolic relative to abelian groups.
\end{rem}

In the setting of hyperbolic 3-manifolds, Theorem \ref{hnn}
generalizes a theorem of Baker-Cooper \cite[Theorem 8.8]{BC}, which
was used to glue parallel boundary components of immersed surfaces
in a 3-manifold to construct closed surfaces in \cite{CL}. As an
application of our theorem, we consider the existence of closed
surface groups in a relatively hyperbolic group.

Let $H$ be the fundamental group of a compact surface $S$. Recall
that $H$ has \textit{no accidental parabolics} in a relatively
hyperbolic group $(G, \mathbb P)$ if the conjugacy
 class of elements in $H$ representing boundary components in $S$ is exactly
the elements in $H$ which can conjugated into some $P \in \mathbb
P$.
\begin{cor} \label{surface}
Suppose that $G$ is hyperbolic relative to abelian subgroups of rank
at least two. Let $H$ be the fundamental group of a compact surface
with boundary such that $H$ has no accidental parabolics in $G$.
Then there exists a closed surface subgroup in $G$ which is
relatively quasiconvex.
\end{cor}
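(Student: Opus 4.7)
The plan is to build a closed surface subgroup by iteratively attaching $1$-handles to $S$ (or to a suitable finite cover) via Theorem \ref{hnn}, where each handle attachment is realized algebraically as an HNN extension identifying two cyclic boundary subgroups of the previous stage. After finitely many such gluings, all boundary components will have been identified in pairs, giving the fundamental group of a closed surface that is relatively quasiconvex in $G$.

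First, the no-accidental-parabolics hypothesis yields the peripheral structure of $H$: its maximal parabolic subgroups are exactly the infinite cyclic subgroups $Q_i = \langle q_i\rangle$ generated by the boundary loops of $S$, each contained in a maximal abelian parabolic of $G$ conjugate to some $P\in\mathbb{P}$ of rank $\geq 2$; in particular $H$ is relatively quasiconvex. I then group the $Q_i$ by the $G$-conjugacy class of their containing peripheral, since two loops can only be glued by an HNN extension when they lie in the same class. If some class has odd cardinality, I pass to a finite cover $\tilde S$ of $S$ chosen so that every such class receives an even number of boundary loops; a double cover with trivial monodromy on each boundary generator suffices. After this replacement, fix an explicit pairing $\{(Q_{i_k}, Q_{j_k})\}_{k=1}^m$ of the boundary subgroups of $\tilde S$ with each pair contained in a common $G$-conjugate of some $P_k\in\mathbb{P}$.

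Next I process the pairs inductively, producing a chain $H = H_0 \subset H_1 \subset \cdots \subset H_m$ of relatively quasiconvex subgroups. At step $k$, choose $f_k\in G$ with $Q_{j_k} = Q_{i_k}^{f_k}$. Because $P_k$ is abelian of rank $\geq 2$ and $Q_{i_k}$ is cyclic, the remark following Theorem \ref{hnn} supplies $c_k\in P_k$ with $Q_{i_k}^{c_k} = Q_{i_k}$ and $d(1,g)>D_k$ for every $g\in c_k Q_{i_k}$. Setting $t_k := f_k c_k$, Theorem \ref{hnn} gives that
\[
H_k \;=\; \langle H_{k-1}, t_k\rangle \;=\; H_{k-1}\star_{Q_{i_k}^{t_k}=Q_{j_k}}
\]
is relatively quasiconvex in $G$ and that every parabolic subgroup of $H_k$ is conjugate into $H_{k-1}$, hence inductively into the original $H$. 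Topologically this replaces the $k$-th pair of boundary circles by a $1$-handle. After all $m$ steps, $H_m$ is the fundamental group of the closed surface obtained from $\tilde S$ by this handle-attachment, and it is relatively quasiconvex in $G$ by construction.

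The main obstacle is the inductive maintenance of hypotheses: at each stage $k$, I must verify that $Q_{i_k}$ and $Q_{j_k}$ remain distinct non-conjugate maximal parabolic subgroups of $H_{k-1}$, each realized as the intersection of $H_{k-1}$ with some $P\in\mathbb{P}$ (up to a harmless conjugation). This is precisely the role of Theorem \ref{hnn}(2): by confining every parabolic of $H_{k-1}$ to a conjugate of $H$, it rules out any unexpected identification of the remaining boundary classes by the previously introduced stable letters, and guarantees that the maximal parabolic structure of the intermediate groups is controlled by the unprocessed boundary loops of $\tilde S$. The combinatorial preparation of the cover in Step 2 is comparatively routine, though low-genus or planar configurations of $S$ may require a small case analysis to produce a connected double cover with the required boundary parities.
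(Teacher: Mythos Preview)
Your route differs from the paper's. The paper does not pass to a cover; instead it first \emph{doubles} $H$ along one boundary subgroup $Q_1$ via an amalgamated product $H_1^+ = H \star_{Q_1} H^{p_1}$, where $p_1\in P_1$ is a long element of the abelian peripheral containing $Q_1$ (this is the parabolic case of Theorem~\ref{freeproduct}, equivalently Martinez-Pedroza's earlier result). The remaining boundary subgroups of $H_1^+$ then occur automatically in $G$-conjugate pairs $(Q_i,\,Q_i^{p_1})$ for $i\ge 2$, all conjugated by the \emph{single} element $f=p_1$, and Theorem~\ref{hnn} is iterated $m-1$ times to close them up. This is uniform in the topology of $S$ and needs no covering or parity bookkeeping.

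Your proposal, by contrast, has a concrete gap in the planar case: if $S$ has genus zero then $\pi_1(S)$ is generated by its boundary loops, so any homomorphism to $\mathbb Z/2$ killing all of them is trivial, and no connected double cover with trivial boundary monodromy exists. Your ``small case analysis'' does not resolve this; one genuinely needs higher-degree covers, and then the lifted boundary subgroups become proper powers, forcing you to recheck that they are still \emph{maximal} parabolics of the cover group as required by Theorem~\ref{hnn}. There is also a subtler imprecision: grouping the $Q_i$ by the $G$-conjugacy class of their \emph{containing peripheral} is not enough to guarantee the existence of $f_k$ with $Q_{j_k}=Q_{i_k}^{f_k}$, since two cyclic subgroups of a common abelian group need not be conjugate in $G$. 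You need the $Q_i$ themselves to be $G$-conjugate, which is exactly what the paper's doubling step manufactures for free.
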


\paragraph{2. \textbf{Our approach: admissible paths}}
The approach in proving our combination results is based on a notion
of \textit{admissible paths} in a geodesic metric space with a
system of contracting subsets. A \textit{contracting} subset is
defined with respect to a preferred class of quasigeodesics such
that any of them far from the contracting subset has a uniform
bounded projection to it. See precise definitions in Section 2.

The notion of contracting subsets turns out to compass many
interesting examples. For instance, quasigeodesics and quasiconvex
subspaces in hyperbolic spaces, parabolic cosets in relatively
hyperbolic groups \cite{GePo4}, contracting segments in CAT(0)
spaces \cite{BF2} and the subgroup generated by a hyperbolic element
in groups with nontrivial Floyd boundary \cite{GePo4}. Relevant to
our context, it is worthwhile to point out that fully quasiconvex
subgroups are contracting, as shown in \cite[Proposition
8.2.4]{GePo4}.

In terms of contracting subsets, an admissible path can be roughly
thought as a concatenation of quasigeodesics which travels
alternatively near contracting subsets and leave them in an
orthogonal way (see Definition \ref{AdmDef}). The informal version
of our main result about admissible paths is the following.
\begin{prop}[cf. Proposition \ref{admissible}]\label{admissible2}
Long admissible paths are quasigeodesics.
\end{prop}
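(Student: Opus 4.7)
The plan is to compare an admissible path $p = p_1 \cdot p_2 \cdots p_n$, whose pieces alternate between traveling along contracting subsets $X_1,\dots,X_k$ and leaving them essentially orthogonally, to a geodesic $\gamma$ joining the same endpoints, and to recover a linear upper bound on $\ell(p)$ in terms of $d(p_-,p_+)$. The crucial tool will be the bounded projection property: any preferred quasigeodesic staying at distance more than some threshold $R$ from a contracting subset $X$ projects to $X$ with diameter at most a universal constant $\sigma$.

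First I would fix notation for the combinatorial data of $p$: label the pieces so that certain consecutive subpaths lie in a bounded neighborhood of contracting subsets $X_i$, and the ``transition'' subpaths between $X_i$ and $X_{i+1}$ meet these subsets orthogonally at their ends, in the sense that their endpoints realize the projection to $X_i$ (up to a bounded error). The hypothesis that $p$ is \emph{long} is to be encoded as a quantitative lower bound $L$ on the length of each piece of $p$ that travels along a contracting subset, with $L$ to be chosen much larger than $R+\sigma$ and the quasigeodesicity constants of the preferred class.

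The main step is to prove, by induction on the number of contracting pieces, that $\gamma$ passes within some controlled distance $R'$ of each $X_i$, and in the correct order. Suppose for contradiction that $\gamma$ stays outside the $R$-neighborhood of some $X_i$. Then by the contracting property applied to $\gamma$, together with the analogous property applied to the transition pieces of $p$ that escape orthogonally from $X_i$, the projections to $X_i$ of the endpoints of $\gamma$ differ by at most a uniform constant. On the other hand, the orthogonality assumption forces these projections to coincide, up to a bounded error, with the endpoints of the piece of $p$ along $X_i$, whose length is at least $L$. Choosing $L$ large enough yields a contradiction, so $\gamma$ meets every neighborhood $N_{R'}(X_i)$ in the prescribed order.

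Once one has points $y_i \in \gamma \cap N_{R'}(X_i)$ appearing along $\gamma$ in the right order, I would cut $\gamma$ into subarcs $\gamma_i$ from $y_i$ to $y_{i+1}$, cut $p$ into the matching subpath $p_i'$ joining the corresponding points of $p$, and bound $\ell(p_i')$ linearly by $d(y_i,y_{i+1})+O(1)$ using either the contracting estimates near $X_i,X_{i+1}$ or the quasigeodesicity of the individual pieces of $p$. Summing over $i$ gives $\ell(p) \le K \cdot \ell(\gamma)+C = K\cdot d(p_-,p_+)+C$, which is the desired quasigeodesic inequality.

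The principal obstacle will be the regime where $\gamma$ is allowed to penetrate the $R$-neighborhood of one $X_i$ while still being far from $X_{i-1}$ and $X_{i+1}$: one must prevent nearby contracting subsets from interacting and carefully combine the bounded projection estimates with the orthogonality of the transition segments so that the inductive bounds close uniformly in the number of pieces. This is the analogue of a Morse-type lemma for contracting subsets, and is the point at which one must follow closely the local-to-global arguments already present in the contracting subset literature, such as \cite{BF2} and \cite{GePo4}.
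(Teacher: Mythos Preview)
Your overall architecture matches the paper's: induct on the number of contracting subsets, show that a geodesic $\gamma$ with the same endpoints must enter a bounded neighborhood of each $X_i$ in order (the paper calls this the $R$-fellow traveller property), and then read off quasigeodesicity by summing over the resulting decomposition. So at the strategic level you are on the right track.

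There is, however, a genuine gap in your contradiction step. You assert that if $\gamma$ misses $N_R(X_i)$ then ``the orthogonality assumption forces these projections to coincide, up to a bounded error, with the endpoints of the piece of $p$ along $X_i$.'' Orthogonality only controls the projection to $X_i$ of the \emph{immediately adjacent} transition pieces $q_i$ and $q_{i+1}$. It says nothing about the projections of $p_0 q_1 p_1 \cdots p_{i-2} q_{i-1} p_{i-1}$, and without bounding those you cannot conclude that $\Pi_{X_i}(p_-)$ is close to $(p_i)_-$. The paper devotes two separate lemmas to this: one (their Lemma~\ref{neartarget}) bounds $\Pi_{X_k}(p_{k-1}q_k)$ using either the bounded intersection of $X_{k-1},X_k$ or the length of $q_k$, and a second (their Lemma~\ref{fartarget}) shows that the entire earlier portion $[p_-,(q_{k-2})_+]_\gamma$ stays outside a large neighborhood of $X_k$. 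The latter is where the induction is actually used, and crucially it requires the inductive hypothesis in the form ``shorter admissible paths are $(\Lambda,0)$-quasigeodesics,'' not merely ``the geodesic passes near each $X_j$.'' Your stated inductive hypothesis is the weaker one, and it is not strong enough to close the projection estimate.

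Concretely: to make your argument go through, strengthen the induction to assert both the fellow-traveller property \emph{and} the quasigeodesic bound for admissible paths with fewer contracting pieces, and insert an intermediate step showing that the prefix of $p$ up to $p_{i-2}$ stays uniformly far from $X_i$ (this uses the quasigeodesic bound on the sub-admissible path obtained by appending a short geodesic in $X_i$). Only then can you decompose the projection of $p_-$ to $X_i$ into uniformly bounded pieces and obtain the contradiction with $L$ large. The obstacle you flag at the end, about $\gamma$ penetrating $N_R(X_i)$ while avoiding $X_{i\pm 1}$, is a secondary issue; the real work is the projection control of the distant pieces of $p$ itself.
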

\begin{example}
\begin{enumerate}
\item
Theorems \ref{freeproduct} and \ref{hnn} are proved by constructing
admissible paths for each element in $H \star_C K$ and $H\star_{Q^t
= Q'}$.
\item
Note that local quasigeodesics in hyperbolic spaces are admissible
paths and hence quasigeodesics (\cite{GH}, \cite{Gro}).
\item
Since contracting segments in the sense of Bestvina-Fujiwara are
contracting in our sense, Proposition \ref{admissible2} can be also
thought as a unified version of Proposition 4.2 and Lemma 5.10 in
\cite{BF2}.
\end{enumerate}
\end{example}

\paragraph{3. \textbf{Separability of double cosets}}
Recall that a subset $X$ of a group $G$ is \textit{separable} if for
any $g \in G \setminus X$, there exists a homomorphism $\phi$ of $G$
to a finite group such that $\phi(g) \notin \phi(X)$; in other
words, $X$ is a \textit{closed} subset in $G$ with respect to the
profinite topology. A group $G$ is called \textit{LERF} if every
finitely generated subgroup are separable. A \textit{slender} group
contains only finitely generated subgroups.

An application of our combination theorem, generalizing Gitik
\cite{Gitik2} and Minasyan \cite{Min}, is to give a criterion of
separability of double cosets of certain relatively quasiconvex
subgroups.
\begin{thm} \label{doublecoset}
Suppose $G$ is hyperbolic relative to slender LERF groups and
separable on fully quasiconvex subgroups. Let $H$ be relatively
quasiconvex and $K$ fully quasiconvex in $G$. If $H' \subset H,
K'\subset K$ are relatively quasiconvex in $G$ such that $H' \cap
K'$ is of finite index in $H \cap K$, then $H'K'$ is separable.
\end{thm}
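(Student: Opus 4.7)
The plan is to follow the scheme of Gitik~\cite{Gitik2} and Minasyan~\cite{Min}, adapted to the relatively hyperbolic setting via Theorem~\ref{freeproduct}. A preliminary observation is that, under our hypotheses, every relatively quasiconvex subgroup of $G$ is separable and finitely generated---the former is a standard consequence of slender LERF peripherals combined with separability on fully quasiconvex subgroups, and the latter follows from slenderness of the peripherals---so in particular $G$ is residually finite and the coset decompositions below will be finite.

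First I would reduce to the case $H\cap K = H'\cap K'$. Writing $C=H\cap K$ and $C'=H'\cap K'$, separability of the relatively quasiconvex subgroup $C'$ combined with $[C:C']<\infty$ yields a finite-index subgroup $G_0\le G$ with $G_0\cap C=C'$. Replacing $(H,K,H',K')$ by $(H\cap G_0,\ K\cap G_0,\ H'\cap G_0,\ K'\cap G_0)$ preserves all hypotheses, forces $C=C'$ in the new data, and shrinks $H',K'$ only by finite index, so that the original $H'K'$ is a finite union of bilateral translates of the new one.

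Next, let $D=D(H,K)$ be the constant from Theorem~\ref{freeproduct}. Using separability of $C$ inside $H'$ and $K'$, and the finiteness of the ball of radius $D$ in $G$, I would find finite-index subgroups $\dot H\le H'$ and $\dot K\le K'$, each containing $C$, such that $d(1,g)>D$ for every $g\in(\dot H\cup\dot K)\setminus C$. Since $\dot H\cap\dot K\subseteq H'\cap K'=C$, one has $\dot H\cap\dot K=C$, and Theorem~\ref{freeproduct} yields $L:=\langle\dot H,\dot K\rangle=\dot H\star_C\dot K$ as a relatively quasiconvex---hence separable---subgroup of $G$. Decomposing $H'=\bigsqcup_i h_i\dot H$ and $K'=\bigsqcup_j\dot K k_j$, one has $H'K'=\bigcup_{i,j}h_i\dot H\dot K k_j$, a finite union of bilateral translates of $\dot H\dot K$; since separability is preserved under such unions, it suffices to separate $\dot H\dot K$ in $G$.

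Given $g_0\notin\dot H\dot K$: if $g_0\notin L$, then separability of $L$ produces a finite-index $N\le G$ with $g_0\notin LN\supseteq\dot H\dot K\cdot N$. If instead $g_0\in L$, then the amalgamated normal form of $g_0$ in $\dot H\star_C\dot K$ has syllable length at least $3$, and the plan is to use residual finiteness of $L$ (inherited from $G$) together with separability of $C$, $\dot H$, and $\dot K$ in $L$ to produce a finite-index subgroup $M\le L$ with $g_0\notin\dot H\dot K\cdot M$; lifting $M$ to a finite-index subgroup of $G$ via separability of $L$ then completes the argument. The main obstacle, and the technical heart of the proof, is exactly this final separation of long-normal-form elements from $\dot H\dot K$ inside the amalgam $L$: because the amalgamating subgroup $C$ may be infinite, one must carefully choose compatible finite-index subgroups of $\dot H$ and $\dot K$ that agree on $C$, extending the Gitik--Rips argument for free products to the amalgamated setting.
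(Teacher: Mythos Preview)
Your overall strategy is sound up to the final step, but the place you flag as ``the main obstacle'' is a genuine gap, and the paper sidesteps it entirely by a different choice earlier in the argument. You fix $\dot H,\dot K$ once, using only the constant $D=D(H,K)$, and then try to prove that $\dot H\dot K$ is separable in $G$; the hard case $g_0\in L\setminus \dot H\dot K$ then forces you to establish double coset separability of the vertex groups inside the amalgam $\dot H\star_C\dot K$, which you do not prove and which is itself a nontrivial theorem when $C$ is infinite.

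The paper avoids this by letting $\dot H,\dot K$ depend on the element $g$ to be separated. Given $g\notin H'K'$ with $m=d(1,g)$, it takes $D_1=\max(m\Lambda,D)$, where $\Lambda=\Lambda(H,K)$ is the quasigeodesic constant from Lemma~\ref{inject1}, and then uses separability of $C$ to choose $\dot H\le H'$, $\dot K\le K'$ with $d(1,f)>D_1$ for all $f\in(\dot H\cup\dot K)\setminus C$. The closed set used is not $H'\dot H\dot K K'$ but the larger $H'\langle\dot H,\dot K\rangle K'=\bigcup_{i,j} h_i\langle\dot H,\dot K\rangle k_j$. If $g$ lay in this set one could write $g=h_0 k_1 h_1\cdots k_n h_n k_{n+1}$ with $n\ge 1$, $h_0\in H'$, $k_{n+1}\in K'$, and the interior letters in $(\dot H\cup\dot K)\setminus C$; the associated normal path is $(D_1,1,0)$-admissible (the first and last pieces are allowed to be short), hence a $(\Lambda,0)$-quasigeodesic, and the presence of an interior piece of length $>D_1\ge m\Lambda$ forces $d(1,g)>m$, a contradiction. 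Thus the quantitative content of Theorem~\ref{freeproduct}---the explicit quasigeodesic constant $\Lambda$, not merely the algebraic amalgam structure---does all the work, and your difficult case $g_0\in L\setminus\dot H\dot K$ never has to be addressed.
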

\begin{rem}
If $H$ is also fully quasiconvex, then the condition on each
parabolic subgroup being LERF and slender would not be necessary.
\end{rem}

An interesting corollary is obtained as follows when each maximal
parabolic subgroup are virtually abelian. Note that abelian groups
are LERF and slender.
\begin{cor} \label{dblparacoset}
Suppose $G$ is hyperbolic relative to virtually abelian groups and
separable on fully quasiconvex subgroups. Then the double coset of
any two parabolic subgroups is separable. In particular, the double
coset of any two cyclic subgroups is separable.
\end{cor}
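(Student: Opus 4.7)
The plan is to reduce the statement to Theorem~\ref{doublecoset}, whose standing hypotheses are automatic here: every finitely generated virtually abelian group is both slender and LERF. Let $P_1, P_2$ be parabolic subgroups, and let $\tilde P_i$ denote the (unique) maximal parabolic subgroup containing $P_i$. Since $\tilde P_i$ is virtually abelian and hence slender, each $P_i$ is finitely generated, and as a finitely generated subgroup of a parabolic subgroup it is relatively quasiconvex in $G$; each $\tilde P_i$, being a maximal parabolic, is fully quasiconvex. The argument splits into two cases according to whether $\tilde P_1 = \tilde P_2$.

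In the easy case $\tilde P_1 \neq \tilde P_2$, almost malnormality of the peripheral structure forces $\tilde P_1 \cap \tilde P_2$ to be finite, so $P_1 \cap P_2 \subseteq \tilde P_1 \cap \tilde P_2$ trivially has finite index in $\tilde P_1 \cap \tilde P_2$. Theorem~\ref{doublecoset} applied with $H = \tilde P_1$, $K = \tilde P_2$, $H' = P_1$, $K' = P_2$ then yields separability of $P_1 P_2$.

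The main obstacle is the remaining case $\tilde P_1 = \tilde P_2 =: \tilde P$, where $P_1 \cap P_2$ need not have finite index in $\tilde P$, so Theorem~\ref{doublecoset} does not apply directly. Here we would exploit the virtually abelian structure of $\tilde P$ as follows: pick a finite-index abelian subgroup $A \leq \tilde P$ that is normal in $\tilde P$, and write each $P_i$ as a finite disjoint union of cosets of $P_i \cap A$. Multiplying the two decompositions and using that $A$ is normal in $\tilde P$ and abelian, one rewrites
\[
P_1 P_2 = \bigcup_{k,\ell} r_k s_\ell \cdot M_\ell,
\]
where each $M_\ell$ is a finitely generated subgroup of $A$, obtained as the product (in the abelian group $A$) of an $s_\ell$-conjugate of $P_1 \cap A$ with $P_2 \cap A$. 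Each $M_\ell$ is a finitely generated subgroup of the parabolic $\tilde P$ and is therefore relatively quasiconvex in $G$; applying Theorem~\ref{doublecoset} with $H = H' = M_\ell$ and $K = K' = \{1\}$ (where the intersection hypothesis is trivially satisfied) then shows that each $M_\ell$ is separable in $G$. Since separability of subsets of $G$ is preserved under left translation and finite unions, $P_1 P_2$ is separable. The ``in particular'' statement is immediate, since cyclic parabolic subgroups form a special case of parabolic subgroups.
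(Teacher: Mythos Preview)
Your treatment of the double coset of two parabolic subgroups is correct. For the case $\tilde P_1=\tilde P_2$ you and the paper take different routes: the paper simply observes that $P_1P_2$ is closed in the profinite topology of the virtually abelian group $\tilde P$, and that this implies closedness in $G$ (since every finite-index subgroup of $\tilde P$ is relatively quasiconvex and hence separable in $G$ by Corollary~\ref{relsepa}). Your explicit decomposition into translates of subgroups $M_\ell\le A$ is a valid alternative and has the virtue of being completely self-contained, though invoking Theorem~\ref{doublecoset} with $K=\{1\}$ is overkill---Corollary~\ref{relsepa} already gives separability of $M_\ell$ directly.

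There is, however, a genuine gap in your handling of the ``in particular'' clause. The assertion is that the double coset of \emph{any} two cyclic subgroups is separable, not merely cyclic parabolic subgroups. A cyclic subgroup $\langle g\rangle$ with $g$ hyperbolic is not parabolic for the given peripheral structure $\mathbb P$, so your reduction does not cover it. The paper closes this gap via a result of Osin: for any hyperbolic element $g$ there is a virtually cyclic subgroup $E(g)\supset\langle g\rangle$ such that $G$ is hyperbolic relative to $\mathbb P\cup\{E(g)\}$. After enlarging the peripheral structure (twice, once for each cyclic factor if needed), both cyclic subgroups become parabolic and the argument for the first part applies. Without this step your proof of the final sentence is incomplete.
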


\begin{rem}
When $G$ is the fundamental group of a cusped hyperbolic 3-manifold
of finite volume,  Hamilton-Wilton-Zalesskii showed in \cite{HWZ}
that,  without additional assumptions, the double coset of any two
parabolic subgroups is separable. In \cite{Wise}, Wise proved that
$G$ is virtually special and thus separable on fully quasiconvex
subgroups. Hence, this corollary with Wise's result gives another
proof of their result.
\end{rem}

This paper is structured as follows. In Section 2, we give a general
development of the notion of admissible paths, which underlies the
proofs of Theorems \ref{freeproduct} and \ref{hnn}. Sections 3 \& 4
and Section 5 are devoted to prove Theorems \ref{freeproduct} and
\ref{hnn} respectively. In the final section, we give the proof of
Theorem \ref{doublecoset} and its corollary.

After the completion of this paper, the author noticed that Eduardo
Martinez-Pedroza and Alessandro Sisto proved a more general
combination theorem in \cite{MarSisto}. Our Theorem
\ref{freeproduct} is a special case of their result. However, our
methods are different and Theorem \ref{hnn} does not follow from
their result.

\section{Axiomatization: Admissible Paths}

The purpose of this section is two-fold. First, a notion of an
admissible path is introduced as a model in proving our combination
theorems in next sections. In fact, this notion arises an attempt to
unify the proofs of combination theorems.

Secondly, we pay much attention to axiomatize the discussion, with
the aim extracting the hyperbolic-like feature naturally occurred in
various contexts. The motivating examples we have in mind are
parabolic cosets in relatively hyperbolic groups and contracting
segments in CAT(0) spaces.

\subsection{Notations and Conventions}
Let $(Y, d)$ be a geodesic metric space. Given a subset $X$ and a
number $U \ge 0$, let $N_U(X) = \{y \in Y: d(y, X) \le U \}$ be the
closed neighborhood of $X$ with radius $U$. Denote by $\diam {X}$
the diameter of $X$ with respect to $d$.

Fix a (sufficiently small) number $\delta > 0$ that won't change in
the rest of paper. Given a point $y \in Y$ and subset $X \subset Y$,
let $\proj_X (y)$ be the set of points $x$ in $X$ such that $d(y, x)
\le d(y, X) + \delta$. Define \textit{the projection} of a subset
$A$ to $X$ as $\proj_X(A) = \cup_{a \in A} \proj_X(a)$.

Let $p$ be a path in $Y$ with initial and terminal endpoints  $p_-$
and $p_+$ respectively. Denote by $\len (p)$ the length of $p$.
Given two points $x, y \in p$, denote by $[x,y]_p$ the subpath of
$p$ going from $x$ to $y$.

Let $p, q$ be two paths in $Y$. Denote by $p\cdot q$(or $p q$ if it
is clear in context) the concatenated path provided that $p_+ =
q_-$.

A path $p$ going from $p_-$ to $p_+$ induces a first-last order as
we describe now. Given a property (P), a point $z$ on $p$ is called
the \textit{first point} satisfying (P) if $z$ is among the points
$w$ on $p$ with the property (P) such that $\len([p_-, w]_p)$ is
minimal. The \textit{last point} satisfying (P) is defined in a
simiarly way.

Let $f(x,y): \mathbb R \times \mathbb R \to \mathbb R_+$ be a
function. For notational simplicity, we frequently write $f_{x,y} =
f(x,y)$.

\subsection{Contracting subsets}

\begin{defn}[Contracting subset]
Suppose $\mathcal L$ is a preferred collection of quasigeodesics in
$X$. Let $\mu: \mathbb R \times \mathbb R \to \mathbb R_+$ and
$\epsilon: \mathbb R \times \mathbb R \to \mathbb R_+$ be two
functions.

Given a subset $X$ in $Y$, if the following inequality holds
$$\diam{\proj_{X} (q)} < \epsilon(\lambda, c),$$
for any $(\lambda, c)$-quasigeodesic $q \in \mathcal L$ with $d(q,
X) \ge \mu(\lambda, c)$, then $X$ is called $(\mu,
\epsilon)$-\textit{contracting} with respect to $\mathcal L$. A
collection of $(\mu, \epsilon)$-contracting subsets is referred to
as a $(\mu, \epsilon)$-\textit{contracting system} (with respect to
$\mathcal L$).

%(If $\epsilon$ is a constant function, then $X$ is called a
%\textit{strongly $(\mu, \epsilon)$-contracting subset}.)
\end{defn}

\begin{example} We note the following examples in various contexts.
\begin{enumerate}
\item
Quasigeodesics and quasiconvex subsets are contracting with respect
to the set of all quasigeodesics in hyperbolic spaces. These are
best-known examples in the literature.
\item
Fully quasiconvex subgroups (and in particular, maximal parabolic
subgroups) are contracting with respect to the set of all
quasigeodesics in the Cayley graph of relatively hyperbolic groups
(see Proposition 8.2.4 in \cite{GePo4}). This is the main situation
that we will deal with in Section 3.
\item
The subgroup generated by a hyperbolic element is contracting  with
respect to the set of all quasigeodesics in groups with non-trivial
Floyd boundary (see Proposition 8.2.4 in \cite{GePo4}). Here
hyperbolic elements are defined in the sense of convergence actions
on the Floyd boundary. Note that groups with non-trivial Floyd
boundary include relatively hyperbolic groups \cite{Ge2}, and it is
not yet known whether these two classes of groups coincide.
\item
Contracting segments in CAT(0)-spaces in the sense of in
Bestvina-Fujiwara are contracting here with respect to the set of
geodesics (see Corollary 3.4 in \cite{BF2}).
\item
Any finite neighborhood of a contracting subset is still contracting
with respect to the same $\mathcal L$.
\end{enumerate}
\end{example}

\begin{conv}
In view of examples above, the preferred collection $\mathcal L$ in
the sequel is always assumed to be containing all geodesics in $Y$.
\end{conv}

\begin{defn}[Quasiconvexity]
Let $\sigma: \mathbb R \to \mathbb R_+$ be a function. A subset $X
\subset Y$ is called \textit{$\sigma$-quasiconvex} if given $U \ge
0$,  any geodesic with endpoints in $N_U(X)$ lies in the
neighborhood $N_{\sigma(U)}(X)$.
\end{defn}

Quasiconvexity follows from the above contracting property.

\begin{lem}\label{quasiconvexity}
Let $X$ be a $(\mu, \epsilon)$-contracting subset in $Y$. Then there
exists a function $\sigma: \mathbb R \to \mathbb R_+$ such that $X$
is $\sigma$-quasiconvex.
\end{lem}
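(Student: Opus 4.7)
The plan is to bound, for any geodesic $\gamma$ with endpoints $p,q \in N_U(X)$, the quantity $d(z,X)$ for every $z \in \gamma$ by a function of $U$ alone. Since $\mathcal L$ contains all geodesics, $\gamma$ is a $(1,0)$-quasigeodesic in $\mathcal L$, so the relevant constants are $\mu_0 := \mu(1,0)$ and $\epsilon_0 := \epsilon(1,0)$, both independent of $U$.

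Fix $z \in \gamma$. If $d(z,X) \le \mu_0$ there is nothing to show, so assume $d(z,X) > \mu_0$. Using continuity of the function $w \mapsto d(w,X)$ along $\gamma$, let $[a,b]_\gamma$ be the connected component of the closed set $\{w \in \gamma : d(w,X) \ge \mu_0\}$ containing $z$. At each endpoint, two alternatives arise: either the endpoint is an endpoint of $\gamma$ itself (so the distance to $X$ is $\le U$), or by continuity the distance to $X$ equals $\mu_0$. In either case $d(a,X),\, d(b,X) \le \max(U,\mu_0)$. The subpath $[a,b]_\gamma$ is itself a geodesic, hence a $(1,0)$-quasigeodesic in $\mathcal L$, and by construction $d([a,b]_\gamma, X) \ge \mu_0$. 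Applying the contracting hypothesis yields $\diam{\proj_X([a,b]_\gamma)} < \epsilon_0$.

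Choosing $x_a \in \proj_X(a)$ and $x_b \in \proj_X(b)$, one has $d(x_a,x_b) < \epsilon_0$, while the definition of $\proj$ gives $d(a,x_a),\, d(b,x_b) \le \max(U,\mu_0) + \delta$. The triangle inequality then bounds
\[
d(a,b) \;\le\; 2\bigl(\max(U,\mu_0) + \delta\bigr) + \epsilon_0,
\]
and since $z \in [a,b]_\gamma$ with $\gamma$ a geodesic, $d(z,X) \le d(z,a) + d(a,X) \le d(a,b) + \max(U,\mu_0)$. The resulting quantity depends only on $U$ (through the fixed constants $\mu_0, \epsilon_0, \delta$), so it defines the desired function $\sigma(U)$. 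The only subtle point is the case analysis at the endpoints of the maximal sub-interval $[a,b]_\gamma$, which is why one takes a connected component of $\{d(\cdot,X) \ge \mu_0\}$ rather than a single high-distance point; I do not anticipate any real obstacle, as this is the standard bounded-projection argument familiar from the hyperbolic setting.
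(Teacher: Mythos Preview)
Your proof is correct and follows essentially the same approach as the paper: take the maximal connected subsegment of $\gamma$ through $z$ staying at distance $\ge \mu(1,0)$ from $X$, apply the contracting property to bound its projection, and then recover a bound on $d(z,X)$ via the triangle inequality through the projected endpoints. The paper's write-up is terser (it records only $d(p_-,X)\le\max(U,\mu_{1,0})$ and jumps to $d(z,X)\le d(z,p_-)+d(p_-,X)\le\sigma(U)$), while you spell out the endpoint case analysis and the detour through $x_a,x_b$; the resulting $\sigma(U)$ differs from the paper's $3\max(U,\mu_{1,0})+\epsilon_{1,0}$ only by the negligible $\delta$ term.
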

\begin{proof}
Given $U \ge 0$, let $\gamma$ be a geodesic with endpoints in
$N_U(X)$. Define $\sigma(U) = 3\max(U, \mu_{1,0}) + \epsilon_{1,0}$.
It suffices to verify that $\gamma \subset N_{\sigma(U)}(X)$.

Let $z$ be a point in $\gamma$ such that $d(z, X) \ge \mu_{1,0}$.
Denote by $p$ the maximal connected segment of $\gamma$ containing
$z$ such that $d(p, X) \ge \mu(1,0)$. Then $\diam{\proj_X(p)} <
\epsilon_{1,0}$. Note that $d(p_-, X) \le \max(U, \mu_{1, 0})$.
Hence, it follows that $$d(z, X) \le d(z, p_-) + d(p_-, X) \le
\sigma(U),$$ which finishes the proof.
\end{proof}

We need a notion of orthogonality of a quasigeodesic path to a
contracting subset.
\begin{defn}[Orthogonality]
Let $X$ be a $(\mu,\epsilon)$-contracting subset in $Y$. Given a
function $\tau: \mathbb R \times \mathbb R \to \mathbb R_+$, a
($\lambda, c$)-quasigeodesic $p$ is said to be
\textit{$\tau$-orthogonal} to $X$ if $\diam{p \cap
N_{\mu(\lambda,c)}(X)} \le \tau(\lambda,c)$.
\end{defn}

The main point of an orthogonal path is that its projection to the
contracting subset is uniformly bounded. In particular, the
following fact will be frequently used later without explicit
mention.
\begin{lem}
Given a $(\epsilon, \mu)$-contracting subset $X$, let $q$ be a
$(\lambda, c)$-quasigeodesic in $\mathcal L$ that is
$\tau$-orthogonal to $X$. Then the following inequality holds
$$\diam {\proj_X(q)} < A_{\lambda,c},$$
where
\begin{equation}\label{A}
A_{\lambda, c} = \mu(\lambda, c) + \tau(\lambda, c)  +
\epsilon(\lambda, c).
\end{equation}
\end{lem}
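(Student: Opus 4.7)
The plan is to pick two arbitrary projection points $z_1, z_2 \in \proj_X(q)$, realise them as $z_i \in \proj_X(w_i)$ for some $w_i \in q$, and bound $d(z_1, z_2)$ by analysing the subpath $r = [w_1, w_2]_q$. Granted the natural convention that the preferred family $\mathcal L$ is closed under taking subpaths, $r$ is itself a $(\lambda, c)$-quasigeodesic in $\mathcal L$, and I would split the analysis according to whether $r$ meets the neighborhood $N_{\mu(\lambda, c)}(X)$.

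If $r \cap N_{\mu(\lambda,c)}(X) = \emptyset$, then $d(r, X) \ge \mu(\lambda, c)$, so the $(\mu, \epsilon)$-contracting hypothesis immediately gives $\diam{\proj_X(r)} < \epsilon(\lambda, c) < A_{\lambda,c}$. Otherwise, let $u_1$ and $u_2$ be respectively the first and last points of $r$ lying in $N_{\mu(\lambda,c)}(X)$. Both are points of $q \cap N_{\mu(\lambda,c)}(X)$, so $\tau$-orthogonality yields $d(u_1, u_2) \le \tau(\lambda, c)$, and the definition of projection gives $d(u_i, \proj_X(u_i)) \le \mu(\lambda, c) + \delta$. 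The two subpaths $[w_1, u_1]_r$ and $[u_2, w_2]_r$ have distance at least $\mu(\lambda, c)$ from $X$ on their interiors, so the contracting property applied to each gives $d(z_1, \proj_X(u_1)), d(z_2, \proj_X(u_2)) < \epsilon(\lambda, c)$. A triangle inequality then assembles these pieces into a bound of the form $d(z_1, z_2) < 2\epsilon(\lambda, c) + \tau(\lambda, c) + 2\mu(\lambda, c) + 2\delta$, which is of the same order as $A_{\lambda, c}$ and suffices once small additive terms are absorbed into the three governing functions.

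The only subtlety I anticipate is the boundary behaviour at $\partial N_{\mu(\lambda,c)}(X)$: the contracting hypothesis is stated for subpaths strictly at distance $\ge \mu(\lambda,c)$ from $X$, while the endpoints $u_i$ may sit exactly on the boundary. I would handle this either by shortening $[w_1, u_1]_r$ and $[u_2, w_2]_r$ by an arbitrarily small amount to push them strictly outside $N_{\mu(\lambda,c)}(X)$ and then passing to a limit, or by appealing to the $\delta$ slack already built into the definition of $\proj_X$ to absorb the gap. Beyond this bookkeeping the argument is direct, and the apparent mismatch with the tidy formula $A_{\lambda,c} = \mu(\lambda,c) + \tau(\lambda,c) + \epsilon(\lambda,c)$ is simply a harmless reindexing of small additive constants.
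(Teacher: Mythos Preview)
The paper does not actually prove this lemma: it is stated immediately after the definition of $\tau$-orthogonality with the remark that it ``will be frequently used later without explicit mention,'' and no argument is supplied. Your first--last point decomposition is exactly the natural proof, and it is the same manoeuvre the paper itself uses repeatedly elsewhere (for instance in the proofs of Lemmas \ref{quasiconvexity}, \ref{neartarget} and \ref{basestep}), so your approach is entirely in keeping with the paper's methods.

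Two comments on your caveats. First, the closure of $\mathcal L$ under subpaths is indeed tacitly assumed throughout the paper: see e.g.\ the bound $\diam{\proj_1([(q_3)_-, z]_{q_3})} \le \epsilon_{\lambda,c}$ in the case $n=2$ of Lemma \ref{basestep}, where the contracting property is applied to a proper subpath of a $(\lambda,c)$-quasigeodesic $q_3 \in \mathcal L$. So you are not introducing an extra hypothesis. Second, your observation that the argument produces a bound like $2\epsilon_{\lambda,c} + \tau_{\lambda,c} + 2\mu_{\lambda,c} + 2\delta$ rather than the tidy $\mu_{\lambda,c} + \tau_{\lambda,c} + \epsilon_{\lambda,c}$ is correct, and it is genuinely harmless: the constant $A_{\lambda,c}$ is only ever fed into the larger constants $C_{\lambda,c}$, $B_{\lambda,c}$, and the thresholds $D$ and $R$ of Proposition \ref{admissible}, all of which merely need to be \emph{some} function of $(\lambda,c)$. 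The precise formula \eqref{A} is a convenient label, not a sharp bound, and nothing downstream depends on its exact form.
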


We now shall introduce an additional property, named bounded
intersection property for a contracting system.
\begin{defn}[Bounded Intersection]
Given a function $\nu: \mathbb R \to \mathbb R_+$, two subsets $X,
X' \subset Y$ have \textit{$\nu$-bounded intersection} if the
following inequality holds
$$\diam{N_U (X) \cap N_U (X')} < \nu(U)$$
for any $U \geq 0$.
\end{defn}
\begin{rem}
Typical examples include sufficiently separated quasiconvex subsets
in hyperbolic spaces, and parabolic cosets in relatively hyperbolic
groups(see Lemma \ref{boundinter}).
\end{rem}

A $(\mu, \epsilon)$-contracting system $\mathbb X$ is said to have
\textit{$\nu$-bounded intersection} if any two distinct $X, X' \in
\mathbb X$ have $\nu$-bounded intersection. A related notion is the
following bounded projection property, which is equivalent to the
bounded intersection property under the contracting assumption as in
Lemma \ref{equivalence} below.

\begin{defn}[Bounded Projection]
Two subsets $X, X' \subset Y$ have \textit{$B$-bounded projection}
for some $B > 0$ if the following holds
$$\diam{\proj_X(X')} < B, \; \diam{\proj_{X'}(X)} < B$$
\end{defn}

\begin{lem}[Bounded intersection $\Leftrightarrow$ Bounded projection]\label{equivalence}
Let $X, X'$ be two $(\mu, \epsilon)$-contracting subsets. Then $X,
X'$ have $\nu$-bounded intersection for some $\nu: \mathbb R \to
\mathbb R_+$ if and only if they have $B$-bounded projection for
some $B > 0$.
\end{lem}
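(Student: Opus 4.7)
\medskip

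\noindent\textbf{Proof proposal.} The plan is to prove the two implications separately, using Lemma~\ref{quasiconvexity} (each contracting subset is $\sigma$-quasiconvex) together with the contracting definition to connect the metric neighborhoods of $X,X'$ to their projections. Throughout I will work with ordinary geodesics, which lie in $\mathcal{L}$ by the standing convention.

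\smallskip

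\emph{Bounded projection $\Rightarrow$ bounded intersection (easy direction).} Given $U \ge 0$, take $p_1, p_2 \in N_U(X) \cap N_U(X')$. Pick $x_i \in X$ with $d(p_i, x_i) \le U$ and $y_i \in \proj_{X'}(x_i)$. Since $d(x_i, X') \le d(x_i, p_i) + d(p_i, X') \le 2U$, the triangle inequality yields $d(x_i, y_i) \le 2U + \delta$. Bounded projection then gives $d(y_1, y_2) \le \diam\proj_{X'}(X) < B$, so chaining $p_1 \to x_1 \to y_1 \to y_2 \to x_2 \to p_2$ produces $d(p_1, p_2) \le 6U + B + O(\delta)$. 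This gives the desired function $\nu(U)$.

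\smallskip

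\emph{Bounded intersection $\Rightarrow$ bounded projection (hard direction).} Fix $y_1, y_2 \in X'$ and $x_i \in \proj_X(y_i)$; the goal is to bound $d(x_1, x_2)$ uniformly. Let $\gamma = [y_1, y_2]$ be a geodesic; by Lemma~\ref{quasiconvexity}, $\gamma \subset N_{\sigma(0)}(X')$. If $d(\gamma, X) \ge \mu(1,0)$ then the contracting definition immediately gives $\diam\proj_X(\gamma) < \epsilon(1,0)$, whence $d(x_1, x_2) < \epsilon(1,0) + 2\delta$. Otherwise, let $z_1$ be the first point of $\gamma$ inside $N_{\mu(1,0)}(X)$ and $z_2$ the last such point. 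On the two outer subpaths $[y_1, z_1]_\gamma$ and $[z_2, y_2]_\gamma$, small perturbations of the endpoints land in subsegments lying outside $N_{\mu(1,0)}(X)$, so the contracting property bounds their projections to $X$ in terms of $\epsilon(1,0)$ and $\mu(1,0)$. For the middle subpath $[z_1, z_2]_\gamma$, note that $z_1, z_2 \in N_U(X) \cap N_U(X')$ with $U = \max(\mu(1,0), \sigma(0))$, and therefore $d(z_1, z_2) < \nu(U)$ directly from the $\nu$-bounded intersection hypothesis. Combining the three pieces bounds $\diam\proj_X(\gamma)$, and thus $d(x_1, x_2)$, by a constant depending only on $\mu, \epsilon, \sigma, \nu$.

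\smallskip

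\emph{Anticipated difficulty.} The technical point is the transition between the ``far'' regime (where contracting applies) and the ``close'' regime (where bounded intersection applies) along $\gamma$: the subpaths outside $N_{\mu(1,0)}(X)$ need not be closed, so one must argue that trimming a neighborhood of $z_1$ and $z_2$ from $\gamma$ still leaves subsegments whose projection is uniformly controlled and whose endpoints are close to $x_1, x_2$ respectively. Once this bookkeeping is done, both implications fit together to give the claimed equivalence, with explicit formulas for $\nu$ in terms of $B$ and for $B$ in terms of $\nu, \mu, \epsilon, \sigma$.
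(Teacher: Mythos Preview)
Your proposal is correct and follows essentially the same approach as the paper: for the hard direction you both take a geodesic between two points of $X'$, locate its first and last entry into $N_{\mu(1,0)}(X)$, apply bounded intersection to the middle piece (using quasiconvexity to place it in a neighborhood of $X'$) and the contracting property to the outer pieces. For the easy direction the paper routes through $\proj_X$ and an auxiliary estimate on the projection of a short segment, whereas your chain $p_i \to x_i \to y_i$ through $\proj_{X'}(X)$ is slightly more direct; both yield an affine bound $\nu(U)$ in $U$. The ``anticipated difficulty'' you flag is indeed only bookkeeping and is handled the same way in the paper.
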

\begin{proof}
$\Rightarrow$: Let $z, w \in \proj_X(X')$ be such that $d(z,w) =
\diam{\proj_X(X')}$. Then there exist $\hat z, \hat w \in X'$ which
project to $z, w$ respectively. Set $B = 2\epsilon_{1, 0} +
\nu(\mu_{1,0})$.

Let's consider $d(z, X)> \mu_{1, 0}$ and $d(w, X)> \mu_{1, 0}$.
Other cases are easier. Let $p$ be a geodesic segment between $z,
w$. Let $\hat u, \hat v$ be the first and last points respectively
on $p$ such that $d(\hat u, X) \le \mu_{1, 0}, d(\hat v, X) \le
\mu_{1, 0}$. Let $u, v$ be a projection point of $\hat u, \hat v$ to
$X$ respectively. Then $d(u, v) \le \nu(\mu_{1,0})$ by the
$\nu$-bounded intersection of $X, X'$. Since $X$ is a $(\mu,
\epsilon)$-contracting subset, we obtain that $d(z, u) \le
\epsilon_{1, 0}, d(w, v) \le \epsilon_{1, 0}$. Hence
$$d(z, w) \le d(z, u) + d(u, v) + d(v, w) \le B.$$

$\Leftarrow$: Given $U >0$, let $z, w \in N_U(X) \cap N_U(X')$. Let
$\hat z, \hat w \in X'$ be such that $d(\hat z, z) \le U, d(\hat w,
z) \le U$. Project $z, w$ to $z', w' \in X$ respectively. Then $d(z,
w) \le d(z', w') + 2U$. It remains to bound $d(z', w')$.

It is easy to verify that the projection of a geodesic segment of
length $U$ on $X$ have a upper bounded size $(2\mu_{1, 0} +
\epsilon_{1, 0} + U)$. Hence $\diam{\proj_X([\hat z, z])} \le
(2\mu_{1, 0} + \epsilon_{1, 0} + U), \diam{\proj_X([\hat w, w])} \le
(2\mu_{1, 0} + \epsilon_{1, 0} + U)$. It follows that
$$
\begin{array}{rl}
d(z' ,w') &\le \diam{\proj_X([\hat z, z])} + \diam{\proj_X(X')} +
\diam{\proj_X([\hat w, w])} \\
&\le B +2(2\mu_{1, 0} + \epsilon_{1, 0} + U).
\end{array}
$$

Then $d(z, w) \le B +4\mu_{1, 0} + 2\epsilon_{1, 0} + 2U$. It
suffices to set $\nu(U) = B +4\mu_{1, 0} + 2\epsilon_{1, 0} + 2U$.
\end{proof}

To conclude this subsection, we note a thin-triangle property when
one side of a triangle lies near a contracting subset. Recall that
the constant $A_{\lambda, c}$ below is defined in (\ref{A}).
\begin{lem}\label{firststep}
Given $X\lambda \ge 1, c \ge 0$, let $\gamma = p q$, where $p$ is a
geodesic and $q$ is $(\lambda,c)$-quasigeodesic in $\mathcal L$.
Assume that $p_-, p_+ \in X \in \mathbb X$ and $q$ is
$\tau$-orthogonal to $X$. Then $\gamma$ is a $(\lambda,
C_{\lambda,c})$-quasigeodesic, where
\begin{equation}\label{C}
C_{\lambda,c} =\lambda(\mu_{\lambda,c}+ \epsilon_{\lambda,c} +
A_{\lambda,c}) + c.
\end{equation}
\end{lem}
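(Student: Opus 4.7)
The plan is a case analysis on the positions of the two test points $u, v \in \gamma$. If both lie on $p$ or both on $q$, the desired bound $\ell([u,v]_\gamma) \le \lambda\, d(u,v) + C_{\lambda,c}$ is immediate (respectively, from $p$ being a geodesic and from $q$ being a $(\lambda,c)$-quasigeodesic in $\mathcal L$), so the substance of the proof is confined to the mixed case $u \in p$, $v \in q$, where I write $\ell([u,v]_\gamma) = d(u,p_+) + \ell([q_-,v]_q)$.

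For the mixed case, I would bound the second summand by the quasigeodesic inequality $\ell([q_-,v]_q) \le \lambda\, d(p_+,v) + c$ and then compare with $d(u,v)$ by projecting $p_+$ onto the geodesic $\alpha := [u,v]$: if $m \in \alpha$ realizes $R := d(p_+,\alpha)$, then the triangle inequalities $d(u,p_+)\le d(u,m)+R$, $d(p_+,v)\le R+d(m,v)$ together with $d(u,m)+d(m,v)=d(u,v)$ and $\lambda \ge 1$ yield
\begin{equation*}
\ell([u,v]_\gamma) \;\le\; \lambda\, d(u,v) + (1+\lambda) R + c.
\end{equation*}
Thus the whole problem reduces to bounding $R$ by a constant built out of $\mu_{\lambda,c}$, $\epsilon_{\lambda,c}$, and $A_{\lambda,c}$.

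To bound $R$ I would combine three ingredients: $p_+ \in X$; by the orthogonal-projection estimate preceding this lemma, $\diam{\proj_X(q)} < A_{\lambda,c}$, and since $p_+ = q_-$ itself lies in $\proj_X(q)$, every projection to $X$ of a point of $q$ is within $A_{\lambda,c}$ of $p_+$; and by Lemma \ref{quasiconvexity}, $u \in p \subset N_{\sigma(0)}(X)$. Applying the contracting property to $\alpha$, I split on whether $\alpha \cap N_{\mu_{1,0}}(X)$ is empty. When $\alpha$ meets $N_{\mu_{1,0}}(X)$, take $z$ to be the last entry point from $u$: the subsegment past $z$ stays outside $N_{\mu_{1,0}}(X)$, so the contracting hypothesis forces $\diam{\proj_X((z,v])} < \epsilon_{1,0}$, and chaining this with $d(z,\proj_X(z)) \le \mu_{1,0}+\delta$ and $d(\proj_X(v),p_+) \le A_{\lambda,c}$ bounds $d(z,p_+)$ by $\mu_{1,0} + \epsilon_{1,0} + A_{\lambda,c} + O(\delta)$, which is an upper bound for $R$. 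When $\alpha$ avoids $N_{\mu_{1,0}}(X)$ altogether, the contracting property gives $\diam{\proj_X(\alpha)} < \epsilon_{1,0}$ and a parallel comparison bounds $d(u,p_+)$ directly; the full length $\ell([u,v]_\gamma)$ is then immediately bounded and absorbed into $C_{\lambda,c}$.

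The main obstacle I anticipate is the projection bookkeeping near the boundary of $N_{\mu_{1,0}}(X)$: the contracting hypothesis strictly requires $d(\cdot,X) \ge \mu_{1,0}$ on the test subpath, so at the entry point $z$ one has to work slightly past $z$ and pass to a limit. Beyond that technicality, the argument is a routine but careful assembly of the quasiconvexity, bounded-projection, and orthogonality estimates already set up in this section, and the constants in the end coagulate into the stated form of $C_{\lambda,c}$.
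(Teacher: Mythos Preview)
Your approach is correct and shares the same core idea as the paper's: pass to the geodesic $\alpha$ joining the two test points, locate a point of $\alpha$ close to $X$ (and hence to $p_+=q_-$) via the contracting property, and use that point as a waypoint to compare $\len(p)+\len(q)$ against $\lambda\, d(\cdot,\cdot)$.

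The difference is one of packaging and completeness. The paper only checks the global endpoint inequality $\len(\gamma)\le \lambda\, d(\gamma_-,\gamma_+)+C_{\lambda,c}$: it takes $\alpha=[\gamma_-,\gamma_+]$, lets $z$ be the last point of $\alpha$ within $\mu$-distance of $X$, and bounds $d(z,p_+)$ by the projection estimates $\diam{\proj_X([z,\alpha_+]_\alpha)}\le\epsilon$ and $\diam{\proj_X(q)}\le A_{\lambda,c}$. Your proof is more careful: you perform the full case split on the positions of $u,v\in\gamma$ (which the paper's sketch omits), and in the mixed case you introduce the auxiliary quantity $R=d(p_+,[u,v])$ before bounding it by exactly the paper's argument. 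The nearest-point step is a mild detour --- one can go straight from the entry point $z$ to the length bound without naming $R$ --- but it does no harm. Your second subcase (when $\alpha$ misses $N_{\mu_{1,0}}(X)$ entirely) is handled correctly once you note $R\le d(u,p_+)$, so the phrase ``bounds $d(u,p_+)$ directly'' should be read as bounding $R$, not the full length; the paper sidesteps this subcase because $\gamma_-=p_-\in X$ guarantees $\alpha$ starts in the neighborhood.
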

\begin{proof} Let $\alpha$ be a geodesic such that $\alpha_- =
\gamma_-, \alpha_+ = \gamma_+$. Let $z$ be the last point on
$\alpha$ such that $d(z, X) \le \mu_{\lambda,c}$. Project $z$ to a
point $z'$ on $X$. Then $d(z, z') \leq \mu_{\lambda,c}$. By
projection, we have
$$\begin{array}{rl}
d(z, q_-) &\le \diam{\proj_{X}([\alpha_-, z]_\alpha)} +
\diam{\proj_{X}(q)} \\
&\le \epsilon_{1,0} + A_{\lambda,c}.
\end{array}
$$
Then we have
$$\begin{array}{rl}
d(z, q_-) &\le d(q_-, z) + d(z, z')\\
&\le \mu_{1,0}+ \epsilon_{1,0} + A_{\lambda,c}.
\end{array}
$$
Hence $\len(\gamma) = \len(p) + \len(q) < \lambda d(\gamma_-,
\gamma_+) + c \le \lambda d(\gamma_-, \gamma_+) + C_{\lambda,c}$.
\end{proof}

\subsection{Admissible Paths}
In this subsection, we give the precise definition of an admissible
path, which is roughly a piecewise quasigeodesic path with
well-controlled local properties.

Recall that $\mathcal L$ is a preferred collection of quasigeodesics
in $X$ such that $\mathcal L$ contains all geodesics. In what
follows, let $\mathbb X$ be a $(\mu, \epsilon)$-contracting system
in $Y$ with respect to $\mathcal L$. Then each $X \in \mathbb X$ is
$\sigma$-quasiconvex, where $\sigma$ is given by Lemma
\ref{quasiconvexity}.

Fix also two functions $\nu: \mathbb R \to \mathbb R_+$ and $\tau:
\mathbb R\times \mathbb R \to \mathbb R_+$, which the reader may
have in mind are the bounded intersection function and orthogonality
function respectively.

\begin{defn}[Admissible Paths]\label{AdmDef}
Given $D \ge 0, \lambda\geq 1, c \geq 0$, a \textit{$(D, \lambda,
c)$-admissible path} $\gamma$ is a concatenation of $(\lambda,
c)$-quasigeodesics in $Y$ such that the following conditions hold:

\begin{enumerate}
\item
Exactly one quasigeodesic $p_i$ of any two consecutive ones in
$\gamma$ has two endpoints in a contracting subset $X_i \in \mathbb
X$,

\item
Each $p_i$ has length bigger then  $\lambda D + c$, except that
$p_i$ is the first or last quasigeodesic in $\gamma$,

\item
For each $X_i$,   the quasigeodesics with one endpoint in $X_i$ are
$\tau$-orthogonal to $X_i$, and

\item
Either any two $X_i, X_{i+1}$(if defined) have $\nu$-bounded
intersection, or the quasigeodesic $q_{i+1}$ between them has length
bigger then $\lambda D + c$.
\end{enumerate}
\end{defn}
\begin{rem}
Note that if $\mathbb X$ has $\nu$-bounded intersection, then the
condition (3) is always satisfied.
\end{rem}

For definiteness in the sequel, usually write $\gamma = p_0 q_1 p_1
\ldots q_n p_n$ and assume that $p_i$ has endpoints in a contracting
subset $X_i \in \mathbb X$ and the following conditions hold.

\begin{enumerate}
\item
$\ell(p_{i}) > \lambda D + c$ for $0 < i < n$.
\item
$q_i \in \mathcal L$ is $\tau$-orthogonal to both $X_{i-1}$ and
$X_i$ for $1\leq i \leq n$.
\item
For $1\leq i \leq n$, either $\len(q_i)
> \lambda D + c$, or $X_{i-1}$ and $X_i$ have $\nu$-bounded
intersection.
\end{enumerate}

\begin{rem}
The collection $\{X_i\}$ therein will be referred to as the
(associated) contracting subsets for $\gamma$. It is not required
that $X_i \ne X_j$ for $i \ne j$. This often facilitates the
verification of a path being admissible.
\end{rem}

\begin{defn}[Fellow Traveller]\label{Fellow}
Assume that $\gamma = p_0 q_1 p_1 ... q_n p_n$ is a $(D, \lambda,
c)$-admissible path, where each $p_i$ has two endpoints in $X_i \in
\mathbb X$. Let $\alpha$ be a path such that $\alpha_- = \gamma_-,
\alpha_+=\gamma_+$.

Given $R >0$, the path $\alpha$ is a \textit{$R$-fellow traveller}
for $\gamma$ if there exists a sequence of successive points $z_i,
w_i$($0 \le i \le n$) on $\alpha$ such that $d(z_i, w_i) \ge 1$ and
$d(z_i, (p_{i})_-) < R, \;d(w_i, (p_{i})_+) < R.$
\end{defn}

\subsection{Quasi-geodesicity of long admissible paths}
The aim of this subsection is to show that for a sufficiently large
$D$, a $(D, \lambda, c)$-admissible path is a quasigeodesic. The
main technical result of this subsection can be stated as follows.
\begin{prop}\label{admissible}
Given $\lambda \geq 1, c \geq 0$, there are constants $D=D(\lambda,
c) >0, R = R(\lambda, c)>0$ such that the following statement holds.

Let $\gamma$ be a $(D_0, \lambda, c)$-admissible path for $D_0 > D$.
Then any geodesic $\alpha$ between $\gamma_-$ and $\gamma_+$ is a
$R$-fellow traveller for $\gamma$.
\end{prop}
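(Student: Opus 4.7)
The approach is to track the geodesic $\alpha$ through a sequence of ``checkpoints'' provided by the contracting subsets $X_i$, showing that for each $0 \leq i \leq n$, $\alpha$ must enter and exit a bounded neighborhood of $X_i$ near $(p_i)_-$ and $(p_i)_+$. I proceed in two stages: a projection analysis forcing $\alpha$ into a neighborhood of each $X_i$ at predictable locations, and an ordering argument ensuring the resulting points $z_i, w_i$ appear in the correct order along $\alpha$.

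For the first stage, the key estimate is that $\proj_{X_i}(\gamma_-)$ lies close to $(p_i)_-$ and $\proj_{X_i}(\gamma_+)$ lies close to $(p_i)_+$, with a bound $R_0 = R_0(\lambda, c)$ independent of $n$. I would derive this via a telescoping projection argument on the initial subpath $[\gamma_-, (p_i)_-]_\gamma = p_0 q_1 p_1 \ldots q_i$. The final piece $q_i$ is $\tau$-orthogonal to $X_i$ and ends at $(p_i)_- \in X_i$, so $\diam{\proj_{X_i}(q_i)} \leq A_{\lambda,c}$, placing this projection within $A_{\lambda,c}$ of $(p_i)_-$. Iterating backward through the pairs $p_{j-1}, q_j$, one of two cases applies: either $X_{j-1}, X_j$ have $\nu$-bounded intersection, so Lemma \ref{equivalence} gives $\diam{\proj_{X_j}(X_{j-1})} < B$ and the projection bound transfers through $q_{j-1}$; or $q_j$ is long enough that Lemma \ref{firststep} controls the relevant concatenation. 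Since condition (2) of admissibility yields $d((p_i)_-, (p_i)_+) > D$, once $D$ exceeds $2R_0 + \epsilon_{1,0}$ the $(\mu,\epsilon)$-contracting property rules out $\alpha$ staying outside $N_{\mu_{1,0}}(X_i)$: otherwise $\diam{\proj_{X_i}(\alpha)} < \epsilon_{1,0}$, contradicting that the endpoints $\gamma_\pm \in \alpha$ project at distance greater than $D - 2R_0$. Defining $z_i, w_i$ as the first and last points of $\alpha$ in $N_{\mu_{1,0}}(X_i)$, a routine projection computation then yields $d(z_i, (p_i)_-), d(w_i, (p_i)_+) < R$ for a suitable $R = R(\lambda, c)$.

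The second stage is the ordering $z_0, w_0, z_1, w_1, \ldots$ along $\alpha$; the only nontrivial step is confirming $w_i$ precedes $z_{i+1}$. If $X_i, X_{i+1}$ have $\nu$-bounded intersection, then $N_{\mu_{1,0}}(X_i) \cap N_{\mu_{1,0}}(X_{i+1})$ has diameter at most $\nu(\mu_{1,0})$, whereas $(p_i)_+$ and $(p_{i+1})_-$ are separated by $\len(q_{i+1})$ plus the transits through both contracting subsets; taking $D$ large compared to $\nu(\mu_{1,0}) + 2R$ forces $w_i, z_{i+1}$ to be distinct and correctly ordered. If instead $\len(q_{i+1}) > \lambda D + c$, the separation is immediate. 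The required $d(z_i, w_i) \geq 1$ follows from $d((p_i)_-, (p_i)_+) > D > 2R + 1$. The main obstacle I anticipate is the telescoping estimate of stage one: propagating a uniform bound $R_0$ across arbitrarily many pieces requires toggling carefully between the two subcases in condition (4) of Definition \ref{AdmDef} without the constants accumulating, and a joint inductive statement --- for example, asserting simultaneously for all $i$ that the initial subpath projects within $R_0$ of $(p_i)_-$ on $X_i$ --- is probably the cleanest route.
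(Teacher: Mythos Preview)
Your two-stage outline is the right architecture, and you have correctly isolated the crux: stage one's telescoping estimate. But the telescoping as written does not go through, and the difficulty is worse than constants accumulating. Condition~(4) of Definition~\ref{AdmDef} only constrains \emph{adjacent} pairs $X_{j-1}, X_j$; it says nothing about $\proj_{X_i}(p_{j})$ or $\proj_{X_i}(q_{j})$ for $j < i-1$. So when you ``iterate backward'' past $p_{i-1}q_i$, each further term $\diam{\proj_{X_i}(p_{j-1})} + \diam{\proj_{X_i}(q_j)}$ is simply unbounded by the hypotheses --- neither bounded intersection with $X_{i-1}$ nor the length of $q_j$ gives any control over projection to the fixed target $X_i$. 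Your invocation of Lemma~\ref{firststep} does not help here: that lemma produces a quasigeodesic from a concatenation, not a projection bound to a third set. Likewise ``the projection bound transfers through $q_{j-1}$'' is not a step that can be made precise without an additional ingredient.

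The missing ingredient, and what the paper's proof supplies, is an induction on $n$ that simultaneously carries along the \emph{quasigeodesic} conclusion (Corollary~\ref{maincor}) for shorter admissible paths. With that in hand, one proves (Lemma~\ref{fartarget}) that the far subpath $[\gamma_-, (q_{k-2})_+]_\gamma$ stays outside $N_{R+\sigma(\mu_{1,0})}(X_k)$: if it entered, one could complete it to a shorter admissible path whose quasigeodesicity would force $\len(p_{k-1})$ to be small. Once far pieces are far from $X_k$, the contracting property bounds their projection. The actual projection estimate (Lemma~\ref{Rnear}) then routes not through $\gamma$ directly but through the \emph{geodesic} $\alpha_1 = [\gamma_-, (p_{k-1})_-]$, using the inductive fellow-traveller property to locate where $\alpha_1$ can meet $N_{\mu_{1,0}}(X_k)$. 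So the ``joint inductive statement'' you reach for at the end is exactly right in spirit, but it must couple the fellow-traveller property with quasigeodesicity of subpaths; the projection bound on $[\gamma_-, (p_i)_-]_\gamma$ is a consequence of that coupling, not an independently provable inductive hypothesis.
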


The main corollary is that a long admissible path is a
quasigeodesic.

\begin{cor}\label{maincor}
Given $\lambda \geq 1, c \geq 0$, there are constants $D=D(\lambda,
c) >0, \Lambda = \Lambda(\lambda, c)\ge 1$ such that given any $D_0
>D$ the $(D_0, \lambda, c)$-admissible path is a $(\Lambda,
0)$-quasigeodesic.
\end{cor}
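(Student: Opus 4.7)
The plan is to deduce the corollary from Proposition \ref{admissible} by a short bookkeeping argument along the fellow traveller geodesic. I will fix $D = D(\lambda, c)$ and $R = R(\lambda, c)$ as supplied by the proposition, take any $(D_0, \lambda, c)$-admissible path $\gamma = p_0 q_1 p_1 \cdots q_n p_n$ with $D_0 > D$, and let $\alpha$ be a geodesic from $\gamma_-$ to $\gamma_+$. The proposition then furnishes successive points $z_0, w_0, \ldots, z_n, w_n$ along $\alpha$ with $d(z_i, w_i) \ge 1$ and $d(z_i, (p_i)_-), \, d(w_i, (p_i)_+) < R$.

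For each piece, the $(\lambda,c)$-quasigeodesicity combined with the triangle inequality gives
$$\ell(p_i) \le \lambda\, d((p_i)_-, (p_i)_+) + c \le \lambda\, d(z_i, w_i) + 2\lambda R + c,$$
and similarly $\ell(q_i) \le \lambda\, d(w_{i-1}, z_i) + 2\lambda R + c$. Summing over all $2n+1$ pieces and using that the points $z_0, w_0, z_1, \ldots, z_n, w_n$ occur in this order along the geodesic $\alpha$, the distances $d(z_i, w_i)$ and $d(w_{i-1}, z_i)$ telescope into a total bounded by $\ell(\alpha) = d(\gamma_-, \gamma_+)$, yielding
$$\ell(\gamma) \le \lambda\, d(\gamma_-, \gamma_+) + (2n+1)(2\lambda R + c).$$

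To absorb the additive term into a linear one, I will bound the number of pieces by the same distance: from $d(z_i, w_i) \ge 1$ and successivity along $\alpha$, one has $n + 1 \le d(\gamma_-, \gamma_+)$, hence $2n+1 \le 2\, d(\gamma_-, \gamma_+)$. Plugging this in produces $\ell(\gamma) \le \Lambda\, d(\gamma_-, \gamma_+)$ with $\Lambda := \lambda + 2(2\lambda R + c)$, which is precisely the desired $(\Lambda, 0)$-quasigeodesic bound; the degenerate case $d(\gamma_-, \gamma_+) < 1$ forces $\gamma$ to be trivial and is handled by inspection. Since all the genuine work is already packaged in Proposition \ref{admissible}, I expect no real obstacle here: this is a two-step counting, with the fellow traveller property doing all the heavy lifting.
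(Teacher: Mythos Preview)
Your argument is correct and is exactly what the paper does: the paper's own proof is the single line ``it suffices to set $\Lambda = \lambda(6R+1) + 3c$'', and you have simply spelled out that bookkeeping in full, arriving at the slightly sharper constant $\lambda(4R+1)+2c$. One small point left implicit both in your write-up and in the paper: the displayed estimate is only the endpoint bound $\ell(\gamma)\le \Lambda\, d(\gamma_-,\gamma_+)$, whereas the $(\Lambda,0)$-quasigeodesic condition asks for this over every subpath --- this follows since a subpath of a $(D_0,\lambda,c)$-admissible path is again admissible, so the same estimate applies.
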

\begin{proof}
Let $D = D(\lambda, c),\; R = R(\lambda, c)$ be given by Proposition
\ref{admissible}. Then it suffices to set $\Lambda = \lambda(6R + 1)
+ 3c$ to complete the proof.
\end{proof}

The reminder of this subsection is devoted to the proof of
Proposition \ref{admissible}.

We now define, a priori, the candidate constants which are
calculated in the course of proof:
$$
R = R(\lambda, c) =  \max \{(\ref{R1}), \;(\ref{R2}), \;(\ref{R3})),
$$
and
$$
D = D(\lambda,c) =  \max\{(\ref{D1}),\; (\ref{D2})\;, (\ref{D3})\;,
(\ref{D4}), \;(\ref{D5})\}
$$

Let $\gamma = p_0 q_1 p_1 \ldots q_n p_n$ be a $(D_0, \lambda,
c)$-admissible path for $D_0 > D$, where $p_i, q_i$ are $(\lambda,
c)$-quasigeodesics. For definiteness, assume that each $p_i$ has
endpoints in $X_i \in \mathbb X$. Moreover, we can assume that each
$p_i$ is a geodesic, as the general case follows as a direct
consequence.

The proof of Proposition \ref{admissible}  is achieved by the
induction on the number of contracting subsets $\{X_i \}$ for
$\gamma$.

We start with a lemma describing the subpath of an admissible path
around a contracting subset. Denote by $\proj_k (q)$ the projection
of $q$ to $X_k$.

\begin{lem}[Near contracting subsets]\label{neartarget}
Let $X_k$$(0 \le k \le n)$ be a contracting subset for $\gamma$.
Then we have the following
$$\forall k >0: \diam {\proj_k(p_{k-1}  q_{k})} < B_{\lambda,c}, $$
and
$$\forall k < n: \diam {\proj_k(q_{k+1}  p_{k+1})} < B_{\lambda,c},$$
where
$$
B_{\lambda,c} = 2\epsilon_{1,0} + 2\mu_{1,0} + \nu(\mu_{1,0} +
\sigma_0) + A_{\lambda,c}.
$$
\end{lem}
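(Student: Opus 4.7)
My plan is to bound separately $\diam{\proj_k(q_k)}$ and $\diam{\proj_k(p_{k-1})}$ and combine them; the inequality for $\diam{\proj_k(q_{k+1}p_{k+1})}$ follows by the mirror-image argument with $X_{k+1}$ in place of $X_{k-1}$, so I focus on the first. By the reduction recorded just before the lemma, each $p_i$ may be taken to be a geodesic, whence Lemma~\ref{quasiconvexity} confines $p_{k-1}$ to $N_{\sigma_0}(X_{k-1})$ with $\sigma_0 := \sigma(0)$.

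The estimate on $q_k$ is immediate from the orthogonality lemma preceding formula~(\ref{A}): since $q_k \in \mathcal L$ is $\tau$-orthogonal to $X_k$ and one of its endpoints lies in $X_k$, one has $\diam{\proj_k(q_k)} < A_{\lambda,c}$. To handle $p_{k-1}$, I would identify the first and last points $u,v$ of $p_{k-1}$ that lie in $N_{\mu_{1,0}}(X_k)$, if any exist. The two ``outside'' subpaths $[(p_{k-1})_-,u]$ and $[v,(p_{k-1})_+]$ are geodesics with $d(\cdot, X_k) > \mu_{1,0}$, so each has projection diameter at most $\epsilon_{1,0}$ by the $(\mu,\epsilon)$-contracting property of $X_k$. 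The ``inside'' subpath $[u,v]$ has both endpoints simultaneously in $N_{\mu_{1,0}}(X_k)$ and, by quasiconvexity of $X_{k-1}$, in $N_{\sigma_0}(X_{k-1})$; invoking $\nu$-bounded intersection of $X_{k-1}$ and $X_k$ at level $U = \mu_{1,0}+\sigma_0$ bounds $d(u,v)$ by $\nu(\mu_{1,0}+\sigma_0)$, which in turn caps the projection of $[u,v]$ up to an additive $2\mu_{1,0}$ coming from projection slack for points within $\mu_{1,0}$ of $X_k$. Telescoping the three pieces and adding $A_{\lambda,c}$ from $q_k$ reproduces exactly $B_{\lambda,c} = 2\epsilon_{1,0} + 2\mu_{1,0} + \nu(\mu_{1,0}+\sigma_0) + A_{\lambda,c}$.

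The main obstacle is the alternative branch of admissibility condition~(4): if $X_{k-1}$ and $X_k$ do not have $\nu$-bounded intersection, then instead $\len(q_k) > \lambda D + c$, and the step using $\nu$ is unavailable. In that regime the $\tau$-orthogonality of $q_k$ to $X_k$ combined with the length lower bound forces $(p_{k-1})_+ = (q_k)_-$ strictly outside $N_{\mu_{\lambda,c}}(X_k)$ as soon as $D > \tau_{\lambda,c}$, so the tail of $p_{k-1}$ abutting $q_k$ escapes $N_{\mu_{1,0}}(X_k)$. I expect the proof to exploit this escape, together with the contracting property applied to the outside subarc ending at $(p_{k-1})_+$, to recover the same numerical bound $B_{\lambda,c}$ without actually activating the bounded-intersection term in the formula. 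Calibrating $D$ so that both branches of condition~(4) yield the same constant $B_{\lambda,c}$ is the delicate point of the argument.
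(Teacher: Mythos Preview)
Your treatment of the bounded-intersection branch is essentially the paper's argument: decompose $p_{k-1}$ at the first and last entries $u,v$ into $N_{\mu_{1,0}}(X_k)$, use contracting for the two outer arcs, and use $p_{k-1}\subset N_{\sigma_0}(X_{k-1})$ together with $\nu$-bounded intersection to bound $d(u,v)$ and hence $\diam{\proj_k([u,v])}$ by $2\mu_{1,0}+\nu(\mu_{1,0}+\sigma_0)$. Adding $A_{\lambda,c}$ for $q_k$ gives $B_{\lambda,c}$ on the nose.

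The gap is in the other branch, where $X_{k-1},X_k$ need not have $\nu$-bounded intersection and instead $\len(q_k)>\lambda D+c$. Your sketch only extracts that the single point $(p_{k-1})_+=(q_k)_-$ lies outside $N_{\mu_{\lambda,c}}(X_k)$; this does \emph{not} prevent $p_{k-1}$ from dipping deep into $N_{\mu_{1,0}}(X_k)$ elsewhere, and without bounded intersection you have no handle on $\diam{[u,v]}$. The contracting property applied to the outer arc ending at $(p_{k-1})_+$ controls only that one arc's projection, not the middle piece. So the ``same numerical bound'' does not follow from your outline.

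What the paper actually does in this branch is stronger: it shows $p_{k-1}\cap N_{\mu_{1,0}}(X_k)=\emptyset$, so the inequality $\diam{p_{k-1}\cap N_{\mu_{1,0}}(X_k)}<\nu(\mu_{1,0}+\sigma_0)$ holds vacuously and the same telescoping goes through. The mechanism is Lemma~\ref{firststep}, applied with $X=X_{k-1}$ (not $X_k$): since $p_{k-1}$ is a geodesic with endpoints in $X_{k-1}$ and $q_k$ is $\tau$-orthogonal to $X_{k-1}$, the concatenation $p_{k-1}q_k$ is a $(\lambda,C_{\lambda,c})$-quasigeodesic. Now suppose some $w\in p_{k-1}$ satisfies $d(w,X_k)\le\mu_{1,0}$; projecting $[w,(p_{k-1})_+]_{p_{k-1}}$ and $q_k$ to $X_k$ gives $d(w,(q_k)_+)\le \mu_{1,0}+\epsilon_{1,0}+A_{\lambda,c}$, whereas the subpath from $w$ to $(q_k)_+$ along $p_{k-1}q_k$ has length at least $\len(q_k)>\lambda D+c$. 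For $D>\mu_{1,0}+\epsilon_{1,0}+A_{\lambda,c}+C_{\lambda,c}$ this contradicts quasigeodesicity. This is the missing idea; your orthogonality-to-$X_k$ observation is not what drives the argument here.
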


\begin{proof}
We only prove the inequality for the case $p_{k-1}q_{k}$. The other
case is similar. We claim the following inequality
\begin{equation}\label{Q1}
\diam {p_{k-1} \cap N_{\mu(1,0)}(X_k)} < \nu(\mu_{1,0} + \sigma_0) ,
\end{equation}
from which the conclusion follows. In fact, assuming the inequality
(\ref{Q1}) is true. Let $z$(resp. $w$) be the first(resp. last)
point of $p_{k-1}$ such that $z, w \in N_{\mu(1,0)}(X_k).$ Then we
have
$$
\begin{array}{rl}
\diam {\proj_k(p_{k-1}  q_k)} &<\diam {\proj_k([(p_{k-1})_-,
z]_{p_{k-1}})} + \diam {\proj_k([z,
w]_{p_{k-1}})} \\
& + \diam {\proj_k([w, (p_{k-1})_+]_{p_{k-1}})} + \diam
{\proj_k(q_k)}\\
&<2\epsilon_{1,0} + (2\mu_{1,0} + \nu(\mu_{1,0} + \sigma_0)) +
A_{\lambda,c} <B_{\lambda,c}.
\end{array}
$$

In order to prove (\ref{Q1}), we examine the following two cases by
the definition of an admissible path.

\textit{Case 1:} $\len (q_k) > \lambda D + c$. We show that $p_{k-1}
\cap N_{\mu(1,0)}(X_k) = \emptyset$ and hence (\ref{Q1}) holds
trivially. Suppose not. Let $w$ be the last point on $p_{k-1}$ such
that $d(w, X_k) \le \mu_{1,0}$. Project $w$ to a point $w' \in X_k$.
Then $d(w, w')< \mu_{1,0}$. Using projection, we obtain
$$\begin{array}{rl}
d(w, (q_k)_+) &< d(w, w') + d(w', (q_k)_+)\\
&< \mu_{1,0} + \diam{\proj_k [w, p_{k-1}]_{p_{k-1}}} + \diam{\proj_k (q_k)}\\
&< \mu_{1,0} + \epsilon_{1,0} + A_{\lambda,c}.
\end{array}$$ Since $p_{k-1}
q_k$ is a $(\lambda, C_{\lambda,c})$-quasigeodesic by Lemma
\ref{firststep}, we have that
$$C_{\lambda,c} + \lambda d(w, (q_k)_+)> \len([w, (p_{k-1})_+]_{p_{k-1}}) + \len(q_k).$$
As it is assumed that
\begin{equation}\label{D1}
D> \mu_{1,0} + \epsilon_{1,0} + A_{\lambda,c}+ C_{\lambda,c},
\end{equation}
this gives a contradiction with $\len(q_k) > \lambda D + c$.

\textit{Case 2:} Otherwise $X_{k-1}, X_k$ have $\nu$-bounded
intersection. Then $p_{k-1}$ lies in $N_{\sigma(0)}(X_{k-1})$. By
the bounded projection property, we have $$\diam {p_{k-1} \cap
N_{\mu(1,0)}(X_k)} < \diam {N_{\sigma(0)} (X_{k-1}) \cap
N_{\mu(1,0)}(X_k)} < \nu(\mu_{1,0} + \sigma_0).$$ This establishes
(\ref{Q1}).
\end{proof}

We are ready to start the base step of induction.
\begin{lem}[Base Step]\label{basestep}
Proposition \ref{admissible} is true for $n=1$ and $n=2$.
\end{lem}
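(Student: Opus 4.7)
The plan is to locate the fellow-traveller points on $\alpha$ as entrance and exit points in the $\mu_{1,0}$-neighborhoods of the contracting subsets $X_i$ associated to $\gamma$, and to control their positions via projection estimates driven by Lemma \ref{neartarget}.

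For $n=1$, write $\gamma=p_0q_1p_1$ with $(p_j)_\pm\in X_j$, and set $z_0=\alpha_-$, $w_1=\alpha_+$. Define $w_0$ as the last point of $\alpha$ lying in $N_{\mu_{1,0}}(X_0)$, and $z_1$ as the first point of $\alpha$ after $w_0$ lying in $N_{\mu_{1,0}}(X_1)$; both exist since $\alpha_-\in X_0$ and $\alpha_+\in X_1$. To estimate $d(w_0,(p_0)_+)$, project $w_0$ to $w_0'\in X_0$ with $d(w_0,w_0')\le\mu_{1,0}$. By definition $[w_0,\alpha_+]_\alpha$ avoids $N_{\mu_{1,0}}(X_0)$ past $w_0$, so the contracting property gives $\diam{\proj_{X_0}([w_0,\alpha_+]_\alpha)}<\epsilon_{1,0}$. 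Combining with Lemma \ref{neartarget}, which yields $\diam{\proj_{X_0}(q_1p_1)}<B_{\lambda,c}$, and using that $(p_0)_+\in X_0$ is its own projection, one obtains $d(w_0,(p_0)_+)<\mu_{1,0}+\epsilon_{1,0}+B_{\lambda,c}$ up to a $\delta$-error; this supplies one of the candidate values of $R$. The estimate $d(z_1,(p_1)_-)<R$ is completely symmetric via $\diam{\proj_{X_1}(p_0q_1)}<B_{\lambda,c}$.

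For the ordering $w_0\le z_1$, suppose instead $z_1<w_0$ on $\alpha$. Splitting $\alpha=[\alpha_-,z_1]\cup[z_1,w_0]\cup[w_0,\alpha_+]$ and combining the previous estimates with the triangle inequality forces $d((p_0)_+,(p_1)_-)\le 2R$ after cancellation. In the subcase $\len(q_1)>\lambda D+c$, one has $d((p_0)_+,(p_1)_-)>D$, yielding a contradiction once $D>2R$; this contributes a threshold to the required $D$. In the subcase where $X_0,X_1$ have $\nu$-bounded intersection, the same inequality forces $(p_0)_+\in X_0\cap N_{2R}(X_1)$, a set of diameter at most $\nu(2R)$, so $w_0$ and $z_1$ are within a uniformly bounded distance and may be identified, absorbing the error into $R$. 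The spacing $d(z_i,w_i)\ge 1$ is automatic for interior $i$ from $\len(p_i)>\lambda D+c$ once $D>2R+1$, and is handled for the end segments by minor perturbation. The case $n=2$ proceeds along the same template with $\gamma=p_0q_1p_1q_2p_2$ and four intermediate points $w_0,z_1,w_1,z_2$ defined as entry/exit points in the neighborhoods of $X_0,X_1,X_2$; each distance bound uses Lemma \ref{neartarget} for a suitable subpath among $q_1p_1$, $p_0q_1$, $q_2p_2$, $p_1q_2$. The order $z_1\le w_1$ is automatic as both lie in $N_{\mu_{1,0}}(X_1)$, and $\len(p_1)>\lambda D+c$ combined with $D>2R+1$ ensures $d(z_1,w_1)\ge 1$.

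The hard part is the ordering analysis together with the case split between the two alternatives in the admissibility definition. In the bounded-intersection subcase it is not a priori clear that $\alpha$ visits the $\mu_{1,0}$-neighborhoods of the $X_i$ in the expected order, and one must either merge consecutive fellow-travelling points (enlarging $R$ by a constant depending on $\nu$) or exploit Lemma \ref{firststep} to force the geometry by increasing $D$. The thresholds extracted from each step accumulate into the candidate values of $R$ and $D$ announced before the statement.
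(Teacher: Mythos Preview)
Your decomposition differs from the paper's: for the case $n=1$ the paper takes $\gamma=q_1p_1q_2$ with a \emph{single} contracting subset $X_1$, and for $n=2$ it takes $\gamma=q_1p_1q_2p_2q_3$. With only one contracting subset in the first case there is no ordering issue at all: one simply shows $\alpha\cap N_{\mu_{1,0}}(X_1)\neq\emptyset$ by projecting, then takes first and last intersection points. The $n=2$ case bootstraps by applying the $n=1$ conclusion to the subpath $q_2p_2q_3$ (to know it is a $(\Lambda,0)$-quasigeodesic) and by first checking $q_3\cap N_{\mu_{\lambda,c}}(X_1)=\emptyset$. The paper also proves the slightly stronger statement where $\alpha_\pm$ are only $\mu_{1,0}$-close to $\gamma_\pm$; this strengthening is exactly what Lemma~\ref{Rnear} and the inductive assembly at the end of the proof of Proposition~\ref{admissible} invoke, and your version does not supply it.

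Your reading $\gamma=p_0q_1p_1$ with two contracting subsets forces you into the ordering analysis, and that is where there is a genuine gap. The assertion that $z_1<w_0$ ``forces $d((p_0)_+,(p_1)_-)\le 2R$ after cancellation'' is not justified: from $d(w_0,(p_0)_+)<R$, $d(z_1,(p_1)_-)<R$ and the wrong order on the geodesic $\alpha$, the triangle inequality only yields $d((p_0)_+,(p_1)_-)<2R+d(z_1,w_0)$, and nothing in your argument bounds $d(z_1,w_0)$. One can salvage the long-$q_1$ subcase by observing that $z_1\in[\alpha_-,w_0]_\alpha\subset N_{\sigma(\mu_{1,0})}(X_0)$ via quasiconvexity and then using orthogonality of $q_1$ to $X_0$ to show $(p_1)_-$ cannot lie in $N_{R+\sigma(\mu_{1,0})}(X_0)$ once $D$ is large enough, but this is a different argument from what you sketch. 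There is also a definitional slip: you first define $z_1$ as the first point \emph{after} $w_0$ in $N_{\mu_{1,0}}(X_1)$, then suppose $z_1<w_0$; meanwhile the projection estimate $d(z_1,(p_1)_-)<R$ via $\diam{\proj_{X_1}(p_0q_1)}<B_{\lambda,c}$ is only valid when $z_1$ is the first such point on all of $\alpha$. You need to commit to one definition before the case split.
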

\begin{proof}
We shall prove a slightly stronger result: let $\alpha$ be a
geodesic such that $d(\alpha_-, \gamma_-) \le \mu_{1,0},\;
d(\alpha_+, \gamma_+) \le \mu_{1,0}$, then $\alpha$ is a $R$-fellow
traveller for $\gamma$.

\textbf{The case "$n=1$"}. Assume that $\gamma= q_1 p_1 q_2$, where
the geodesic $p_1$ has two endpoints in a contracting subset $X_1$.

Note that $[\alpha_-, \gamma_-]$ and $[\alpha_+, \gamma_+]$ are of
length at most $\mu_{1, 0}$. By projection we have
$$
\diam{\proj_1([\alpha_-, \gamma_-])} \le \epsilon_{1,0} + 3 \mu_{1,
0}, \; \diam{\proj_1([\alpha_+, \gamma_+])} \le \epsilon_{1,0} + 3
\mu_{1, 0}.
$$

We claim that $N_{\mu(1, 0)}(X_1) \cap \alpha \neq \emptyset$.
Suppose not. Then we can estimate by projection
$$
\begin{array}{rl}
\len (p_1) & \le \diam {\proj_1(q_1)} + \diam {\proj_1(\alpha)}
+\diam {\proj_1(q_2)} + \diam{\proj_1([\alpha_-, \gamma_-])} +
\diam{\proj_1([\alpha_+, \gamma_+])} \\
& \le 2A_{\lambda, c} + 3\epsilon_{1, 0} + 6 \mu_{1, 0}.
\end{array}
$$
This gives a contradiction as it is assumed that
\begin{equation}\label{D2}
D > 2A_{\lambda, c} + 3\epsilon_{1, 0} + 6 \mu_{1, 0}.
\end{equation}

Let $z$ and $w$ be the first and last points of $\alpha$ such that
$z, w \in N_{\mu(1, 0)}(X_1).$ Project $z, w$ to $z', w'$ to $X_1$
respectively. Hence we see
$$
\begin{array}{rl}
d((q_1)_+, z) &\le \diam {\proj_1(q_1)} + \diam {\proj_1([\alpha_-,
z]_\alpha)} + \diam{\proj_1([\alpha_-, \gamma_-])} + d(z, z') \\
& \le A_{\lambda, c} + 2\epsilon_{1, 0} + 4\mu_{1, 0} < R -1,
\end{array}
$$ as it is assumed that
\begin{equation}\label{R1}
R > A_{\lambda, c} + 2\epsilon_{1, 0} + 4\mu_{1, 0} + 1.
\end{equation}
It is similar that $d((q_2)_-, w) < R -1$. Up to a slight
modification of $z, w$, we see that $\alpha$ is a $R$-fellow
traveller for $\gamma$.

\textbf{The case "$n=2$"}. This case is similar to the case "n=1".
We only indicate the necessary changes in the below.

Assume that $\gamma = q_1 p_1 q_2 p_2 q_3$, where the geodesic $p_1,
p_2$ have two endpoints in contracting subsets $X_1, X_2$
respectively.

We first claim that $N_{\mu(\lambda, c)}(X_1) \cap q_3 = \emptyset$.
If not, let $z$ be the first point on $q_3$ such that $z \in
N_{\mu(\lambda, c)}(X_1) \cap q_3$. Project $z$ to $z'$ on $X_1$. By
Lemma \ref{neartarget}, we see that
$$
d(z', (q_2)_-) \le \diam {\proj_1(q_2 p_2)} + \diam
{\proj_1([(q_3)_-, z]_{q_3})} \le B_{\lambda, c} +
\epsilon_{\lambda, c}.
$$
The case "$n=1$" shows that $q_2 p_2 q_3$ is a $(\Lambda,
0)$-quasigeodesic, where $\Lambda = \Lambda(\lambda, c)$ is given by
Corollary \ref{maincor}. It follows that
$$
\len(p_2) \le \len([(q_2)_-, z]_{\gamma}) \le \Lambda d((q_2)_-, z)
\le \Lambda(B_{\lambda, c} + \epsilon_{\lambda, c} + \mu_{\lambda,
c}).
$$
This gives a contradiction as it is assumed that
\begin{equation}\label{D3}
D > \Lambda(B_{\lambda, c} + \epsilon_{\lambda, c} + \mu_{\lambda,
c}).
\end{equation}
Hence it is shown that $N_{\mu(\lambda, c)}(X_1) \cap q_3 =
\emptyset$.

Using the same argument as the case "$n=1$", we can see that $\alpha
\cap N_{\mu(1, 0)} (X_i) \neq \emptyset$ for $i= 1, 2$. Let $z_1$
and $w_1$ be the first and last points of $\alpha$ such that $z_1,
w_1 \in N_{\mu(1, 0)}(X_1).$ Then as in the case "$n=1$", we obtain
$$d((p_1)_-, z_1) \le A_{\lambda, c} + 2\epsilon_{1, 0} + 4\mu_{1, 0} < R -1,$$ and
$$
\begin{array}{rl}
d((p_1)_+, w_1) & \le \diam {\proj_1(q_2 p_2)} + \diam
{\proj_1(q_3)}
+ \diam {\proj_1([w_1, \alpha_+]_\alpha)} + \diam{\proj_1([\alpha_+, \gamma_+])}\\
& \le B_{\lambda, c} + 3\epsilon_{1, 0} + 4\mu_{1, 0} < R -1,
\end{array}
$$ as it is assumed that
\begin{equation}\label{R2}
R > B_{\lambda, c} + 3\epsilon_{1, 0} + 4\mu_{1, 0} + 1.
\end{equation}

Consider the path $\gamma' = [w_1', (p_1)_+] q_2 p_2 q_3$ which is
$(D, \lambda, c)$-admissible. Let $\alpha' = [w_1,
\alpha_+]_\alpha$. Let $z_2$ and $w_2$ be the first and last points
of $\alpha'$ such that $z_2, w_2 \in N_{\mu(1, 0)}(X_2).$ Similarly
as above, we obtain that $d((p_2)_-, z_2) \le R-1, d((p_2)_+, w_2)
\le R-1$.

Consequently, it is shown that $\alpha$ is a $R$-fellow traveller
for $\gamma$.
\end{proof}

\textbf{Inductive Assumption}: Assume that Proposition
\ref{admissible} holds for any $(D_0, \lambda,c)$-admissible path
$\gamma'$ with $k$ contracting subsets $X_i \in \mathbb X$ ($k \le
n$). Then any geodesic between $\gamma'_-, \gamma'_+$ is a
$R$-fellow traveller for $\gamma'$. Moreover $\gamma'$ is a
$(\Lambda,0)$-quasigeodesic, where $\Lambda = \Lambda(\lambda, c)$
is given by Corollary \ref{maincor}.

We now consider the admissible path $\gamma = p_0 q_1 p_1 \ldots q_n
p_n$, which has $n+1$ contracting subsets $\{X_i \in \mathbb X: 0
\le i \le n\}$.
\begin{lem}[Far from contracting subsets]\label{fartarget}
Let $X_k$ $(0 <k < n)$ be a contracting subset for $\gamma$. Denote
$\beta = [(p_0)_-,(q_{k-2})_+]_\gamma$. Then the following holds
$$\beta \cap N_{R+ \sigma(\mu(1,0))}(X_k) = \emptyset.$$
\end{lem}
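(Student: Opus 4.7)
I plan to argue by contradiction. Suppose there exists $z \in \beta$ with $d(z, X_k) \le R + \sigma(\mu(1,0))$, and fix a projection point $z' \in \proj_{X_k}(z)$. The underlying intuition is that combining such a $z$ with the ``long'' middle pieces $p_{k-1}$ and $q_k$ (whose length is controlled by the admissibility hypothesis) must violate either the quasigeodesicity supplied by the induction or the bounded projection/intersection controls near $X_k$.

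My first step is to apply the inductive hypothesis to the admissible subpath $\eta = [(p_0)_-, (p_k)_+]_\gamma$, which has $k+1 \le n$ contracting subsets (using $k < n$). This yields that $\eta$ is a $(\Lambda, 0)$-quasigeodesic and that the geodesic $\alpha_\eta$ from $\eta_-$ to $\eta_+$ is an $R$-fellow traveller for $\eta$. In particular there are ordered breakpoints $z_i, w_i \in \alpha_\eta$ satisfying $d(z_i, (p_i)_-) < R$ and $d(w_i, (p_i)_+) < R$ for each $0 \le i \le k$. Combining the $(\Lambda, 0)$-quasigeodesicity of $\eta$ with the $\sigma$-quasiconvexity of the intermediate $X_j$ (so as to pin down the geodesic between consecutive breakpoints), I extract a point $a \in \alpha_\eta$ at controlled distance from $z$; consequently $a \in N_M(X_k)$ for some $M$ of order $R + \sigma(\mu(1,0))$.

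Next I invoke Lemma \ref{quasiconvexity} for $X_k$ applied to the geodesic subpath $[a, z_k]_{\alpha_\eta}$: both endpoints lie in $N_M(X_k)$, so the entire subpath lies in $N_{\sigma(M)}(X_k)$. In particular $w_{k-1}$, which sits between $a$ and $z_k$ along $\alpha_\eta$, satisfies $d((p_{k-1})_+, X_k) \le R + \sigma(M)$. I then split according to condition (4) of admissibility at index $k$. If $\len(q_k) > \lambda D + c$, then the $(\lambda, c)$-quasigeodesic property of $q_k$ forces $d((p_{k-1})_+, (p_k)_-) > D$; combining the previous bound on $d((p_{k-1})_+, X_k)$ with Lemma \ref{neartarget} produces a matching upper bound of order $R + \sigma(M) + B_{\lambda, c}$, a contradiction for $D$ chosen sufficiently large. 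Otherwise $X_{k-1}$ and $X_k$ have $\nu$-bounded intersection; the same quasiconvexity argument applied to $[a, w_k]_{\alpha_\eta}$ places $z_{k-1}$ in $N_{\sigma(M)}(X_k)$ as well, so $z_{k-1}, w_{k-1}$ both lie in $N_U(X_{k-1}) \cap N_U(X_k)$ for a common $U$, giving $d(z_{k-1}, w_{k-1}) < \nu(U)$ and hence $\len(p_{k-1}) < 2R + \nu(U)$, contradicting $\len(p_{k-1}) > \lambda D + c$ for $D$ large.

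The main obstacle is the initial tracking step: Definition \ref{Fellow} formulates the fellow-traveller property only at the breakpoints $(p_i)_\pm$, whereas I need to locate a nearby point on $\alpha_\eta$ for an \emph{arbitrary} interior point $z \in \beta$. This is where the inductive hypothesis is indispensable, since without the $(\Lambda, 0)$-quasigeodesicity of $\eta$ one cannot bound $d(z, \alpha_\eta)$ uniformly. The additive errors accumulated in this tracking step, together with those from the projection estimates above, are what determine the lower bounds on $R$ and $D$ contributed by this lemma to the standing choices fixed at the start of the subsection.
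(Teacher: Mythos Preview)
Your argument has a genuine gap at the tracking step. In a general geodesic metric space, the fact that $\eta$ is a $(\Lambda,0)$-quasigeodesic does \emph{not} imply that an arbitrary point $z\in\eta$ lies within a uniformly bounded distance of the geodesic $\alpha_\eta$; that Morse-type conclusion requires hyperbolicity, which is not part of the hypotheses here. The fellow-traveller property of Definition~\ref{Fellow} only controls the breakpoints $(p_i)_\pm$, and the quasiconvexity of the intermediate $X_j$ does not bridge the gap: knowing that $[z_j,w_j]_{\alpha_\eta}\subset N_{\sigma(R)}(X_j)$ tells you nothing about where a given point of $p_j$ (let alone of $q_j$) sits relative to $\alpha_\eta$. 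Without the point $a$, your subsequent case analysis never gets started, and the ordering assertion ``$w_{k-1}$ lies between $a$ and $z_k$'' is likewise unjustified.

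The paper sidesteps this entirely by never passing to the companion geodesic. It projects $z$ to a point $w\in X_k$ and forms the path
\[
\hat\beta \;=\; [(p_0)_-,\,(p_k)_-]_\gamma \cdot [(p_k)_-,\,w],
\]
which is still a $(D_0,\lambda,c)$-admissible path with at most $k+1\le n$ contracting subsets (the last contracting segment is the geodesic in $X_k$ from $(p_k)_-$ to $w$). By the inductive assumption $\hat\beta$ itself is a $(\Lambda,0)$-quasigeodesic, so
\[
\len\bigl([z,w]_{\hat\beta}\bigr)\;\le\;\Lambda\,d(z,w)\;\le\;\Lambda\bigl(R+\sigma(\mu(1,0))\bigr).
\]
Since $z$ lies on $\beta$, the subpath $[z,w]_{\hat\beta}$ contains all of $p_{k-1}$, whence $\len(p_{k-1})\le\Lambda(R+\sigma(\mu(1,0)))$, contradicting the standing choice $D>\Lambda(R+\sigma(\mu(1,0)))$ in (\ref{D4}). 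The key point is that the quasigeodesic inequality is applied to two points \emph{on the admissible path itself}, so no comparison with an auxiliary geodesic---and hence no tracking of interior points---is needed at all.
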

\begin{proof}
Suppose, to the contrary, that $\beta \cap N_{R+
\sigma(\mu(1,0))}(X_k) \ne \emptyset$. Let $z$ be the last point on
$\beta$ such that $d(z, X_k) \leq R + \sigma_{\mu(1,0)}$. Project
$z$ to $w$ on $X_k$.

Observe that $\hat \beta = \beta \cup p_{k-1} q_k [(p_k)_-, w]$ is a
$(D_0, \lambda,c)$-admissible path with at most $n$ contracting
subsets, as $k < n$. Hence $\bar \beta$ is a $(\Lambda,
0)$-quasigeodesic by Inductive Assumption. It follows that
$$\len([z, w]_{\hat \beta}) < \Lambda(R + \sigma_{\mu(1,0)}).$$ This
gives a contradiction with $\len (p_{k-1})
> D$, as it is assumed that
\begin{equation}\label{D4}
D > \Lambda(R  + \sigma_{\mu(1,0)}).
\end{equation}

Therefore, the segment $\beta$ has at least a distance $R +
\sigma_{\mu(1,0)}$ to $X_k$.
\end{proof}

\begin{figure}[htb] % You can use the htb to tell latex: place here if possible, if not try top of page, if not try bottom of page.
\centering \scalebox{0.7}{
\includegraphics{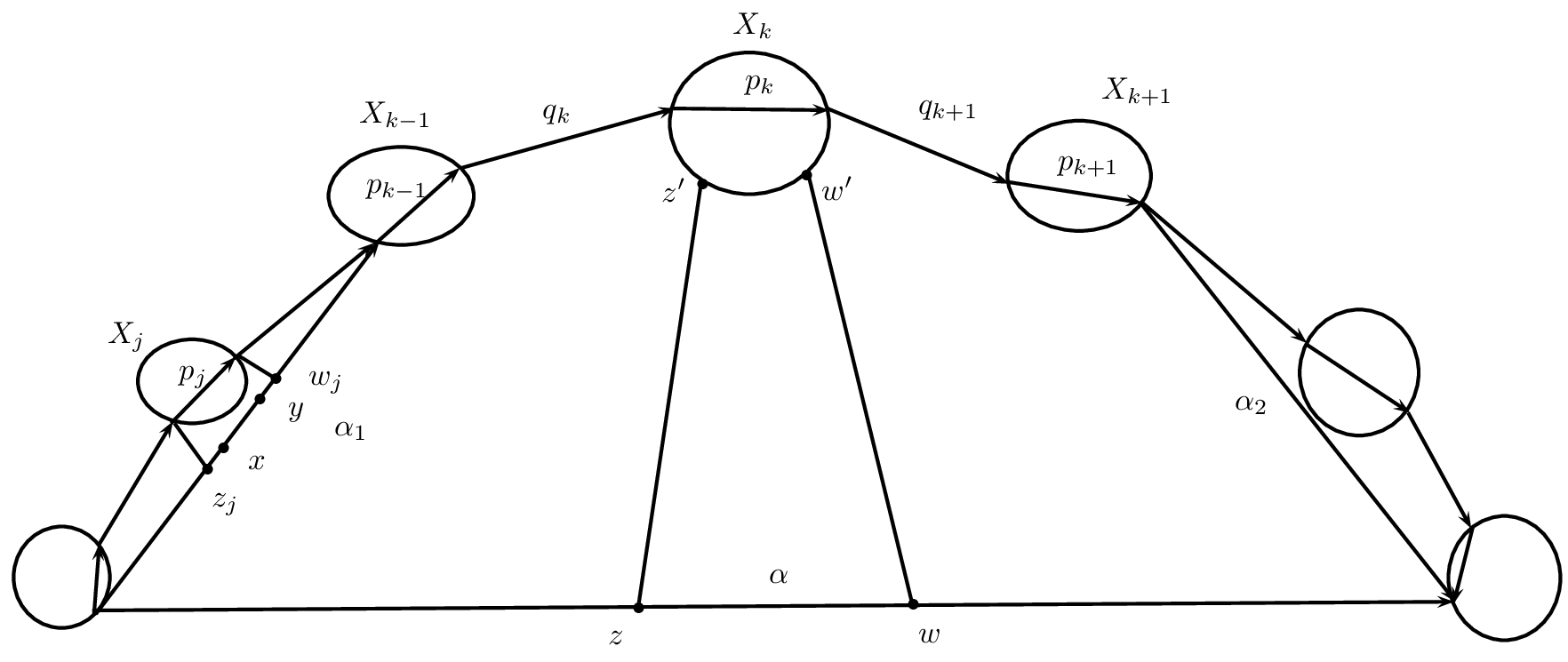} % Do not put file extension, only name
} \caption{Proof of Proposition \ref{admissible}} \label{fig:fig1}
\end{figure}

A key step in proving Proposition \ref{admissible} is the following.
\begin{lem}\label{Rnear}
Let $X_k$ be a contracting subset for $\gamma$, where $0 < k < n$.
Assume $\alpha$ is a geodesic such that $d(\gamma_-, \alpha_-) \le
\mu_{1, 0},\; d(\gamma_+, \alpha_+) \le \mu_{1, 0}$. Then  there
exist points $z, w \in \alpha \cap N_{\mu(1, 0)}(X_k)$ such that
$d(z, (p_k)_-) < R-1, \; d(w, (p_k)_+) < R-1$.
\end{lem}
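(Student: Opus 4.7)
The plan is to extend the two-step scheme of the base case (Lemma~\ref{basestep}) --- first showing that $\alpha$ enters $N_{\mu(1,0)}(X_k)$, then pinpointing where --- and to carry out the location step in parallel for $z$ on the left and $w$ on the right.

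For existence, I would suppose $\alpha \cap N_{\mu(1,0)}(X_k) = \emptyset$ and derive a contradiction. Since $\alpha \in \mathcal L$ is a geodesic avoiding $N_{\mu(1,0)}(X_k)$, the contracting property gives $\diam{\proj_{X_k}(\alpha)} < \epsilon_{1,0}$. I would then read $\len(p_k) = d((p_k)_-,(p_k)_+)$ as (at most) the total $X_k$-projection diameter of the broken path $(p_k)_- \to \gamma_- \to \alpha_- \to \alpha_+ \to \gamma_+ \to (p_k)_+$ and bound each link: $\diam{\proj_{X_k}(p_{k-1}q_k)}$ and $\diam{\proj_{X_k}(q_{k+1}p_{k+1})}$ by Lemma~\ref{neartarget}; the two short connectors $[\gamma_\pm,\alpha_\pm]$ by the short-segment projection estimate from the proof of Lemma~\ref{equivalence}; $\alpha$ itself by $\epsilon_{1,0}$; and the two far portions of $\gamma$ by Lemma~\ref{fartarget} (and its symmetric right-side counterpart) together with the inductive hypothesis. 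Summing yields $\len(p_k) <$ constant depending only on $\lambda, c$, contradicting $\len(p_k) > \lambda D + c$ once $D$ is chosen larger than this constant (a new condition of type~\ref{D5}).

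For the location step, let $z$ and $w$ be the first and last points of $\alpha$ in $N_{\mu(1,0)}(X_k)$ and pick projections $z', w' \in X_k$. The subpath $[\alpha_-, z]_\alpha$ is a geodesic whose interior lies at distance $\ge \mu_{1,0}$ from $X_k$, so $\diam{\proj_{X_k}([\alpha_-, z]_\alpha)} < \epsilon_{1,0}$. I would bound $d((p_k)_-, z')$ by running the same projection argument along the left half only, from $(p_k)_-$ back through $q_k p_{k-1}$, across the left far portion of $\gamma$, through $[\gamma_-,\alpha_-]$, and along $[\alpha_-, z]_\alpha$ --- controlled respectively by Lemma~\ref{neartarget}, Lemma~\ref{fartarget}, the short-segment estimate, and the contracting bound. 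This produces a constant $R_0 = R_0(\lambda,c)$ with $d(z,(p_k)_-) \le d(z,z') + d(z',(p_k)_-) \le \mu_{1,0} + R_0$; setting $R > R_0 + \mu_{1,0} + 1$ (condition~\ref{R3}) gives $d(z,(p_k)_-) < R - 1$. The bound for $w$ is obtained by the symmetric right-side argument.

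The main obstacle I anticipate is handling the ``far'' portions of $\gamma$: these are admissible subpaths with fewer than $n+1$ contracting subsets and so are $(\Lambda,0)$-quasigeodesics by the inductive assumption, but they need not lie in $\mathcal L$, so the contracting bound cannot be applied to them directly. I would resolve this by exploiting the margin $R + \sigma_{\mu(1,0)}$ supplied by Lemma~\ref{fartarget}: a geodesic between the endpoints of such a far portion does lie in $\mathcal L$, and using the inductive fellow-travelling estimate together with this margin it cannot cross $N_{\mu(1,0)}(X_k)$, so its $X_k$-projection is bounded by $\epsilon_{1,0}$. This substitution step is what ultimately fixes the sizes of $D(\lambda, c)$ and $R(\lambda, c)$ in the recursion.
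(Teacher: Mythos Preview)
Your two-step outline and your diagnosis of the obstacle are exactly right, and up to that point your argument matches the paper's. The gap is in the resolution you propose. The inductive fellow-travelling (Definition~\ref{Fellow}) only gives closeness of the replacing geodesic $\alpha_1=[\gamma_-,(p_{k-1})_-]$ to $\beta_1$ at the discrete marker points $z_j,w_j$; it does \emph{not} give a uniform Hausdorff bound. Between markers the geodesic $\alpha_1$ is only known to lie in $N_{\sigma(R)}(X_j)$, and since the definition of admissible path does not require $X_j\neq X_k$ or bounded intersection of $X_j$ with $X_k$ (only of \emph{consecutive} contracting subsets, and even that only conditionally), nothing prevents $\alpha_1$ from entering $N_{\mu(1,0)}(X_k)$. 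Concretely, if $X_j=X_k$ for some $j\le k-2$ --- which the remark after Definition~\ref{AdmDef} explicitly allows --- then $\alpha_1$ certainly meets $N_{\mu(1,0)}(X_k)$ near $z_j,w_j$. So the assertion ``it cannot cross $N_{\mu(1,0)}(X_k)$'' is false in general, and the single $\epsilon_{1,0}$ bound you want for the far portion is unavailable.

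The paper's proof does not attempt to keep $\alpha_1$ out of $N_{\mu(1,0)}(X_k)$. Instead it first proves a \emph{Claim}: the portion $[x,y]_{\alpha_1}$ of $\alpha_1$ inside $N_{\mu(1,0)}(X_k)$ contains none of the fellow-traveller points $z_j,w_j$, and the short connectors $[(p_j)_\pm,z_j],[(p_j)_\pm,w_j]$ themselves stay at distance $\ge\mu_{1,0}$ from $X_k$. Both parts follow from Lemma~\ref{fartarget}: if some $z_j$ fell in $[x,y]_{\alpha_1}\subset N_{\sigma(\mu_{1,0})}(X_k)$, then $(p_j)_-$ would be within $R+\sigma(\mu_{1,0})$ of $X_k$, contradicting that lemma. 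With the Claim in hand, the possible incursion of $\alpha_1$ into $N_{\mu(1,0)}(X_k)$ is confined to a single subsegment $[z_j,w_j]_{\alpha_1}$, and one \emph{detours} around it via $z_j\to(p_j)_-\to(p_j)_+\to w_j$; each of these three geodesic pieces stays outside $N_{\mu(1,0)}(X_k)$ (by the Claim and by Lemma~\ref{fartarget} for $p_j$), so each contributes only $\epsilon_{1,0}$. This yields the estimate $\diam{\proj_k([\gamma_-,(p_k)_-]_\gamma)}<5\epsilon_{1,0}+B_{\lambda,c}$ and the rest of the argument proceeds as you describe. In short: replace your ``avoidance'' step by a ``localize and detour'' step.
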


\begin{proof}
Let $\alpha_1 = [\gamma_-, (p_{k-1})_-]$, $\alpha_2 = [(p_{k+1})_+,
\gamma_+]$, $\beta_1 = [\gamma_-, (p_{k-1})_-]_\gamma$ and $\beta_2
= [(p_{k+1})_+, \gamma_+]_\gamma$. Note that $\alpha_1, \alpha_2,
\beta_1, \beta_2$ may be trivial.

Apply Induction Assumption to $\beta_1$. It follows that $\alpha_1$
is a $R$-fellow traveller for $\beta_1$. Let $z_j, w_j \in \alpha_1$
be the points given by Definition \ref{Fellow}. Note that  $d(z_j,
w_j) \ge 1$ and $d((p_j)_-, z_j) < R$, $d((p_j)_+, w_j) < R$.

Let $x, y$ be the first and last points on $\alpha_1$ respectively
such that $x, y \in N_{\mu(1,0)}(X_k)$. See Figure \ref{fig:fig1}.

\begin{claim}
The segment $[x, y]_{\alpha_1}$ contains no points from $\{z_j, w_j:
1 < j < k-1\}$. Moreover, any geodesic segment  $[(p_j)_-, z_j]$ and
$[(p_j)_+, w_j]$ have at least a distance $\mu_{1,0}$ to $X_k$.
\end{claim}
\begin{proof}[Proof of Claim]
Suppose, to the contrary, that there is a point, say $z_j$ from
$\{z_j, w_j: 1 < j < k-1\}$ such that $z_j \in [x, y]_{\alpha_1}$.
Note that any point on $[x, y]_{\alpha_1}$ has at most a distance
$\sigma_{\mu(1,0)}$ to $X_k$. As $d(z_j, (p_j)_-) < R$, we have
$d((p_j)_-, X_k) < R + \sigma_{\mu(1,0)}$. This contradicts Lemma
\ref{fartarget}, as it follows that $\beta_1$ has at least a
distance $(R + \sigma_{\mu(1,0)})$ to $X_k$.

By the same argument, one can see that any geodesic segment
$[(p_j)_-, z_j]$ and $[(p_j)_+, w_j]$ have at least a distance
$\mu_{1,0}$ to $X_k$.
\end{proof}
By the \textbf{Claim} above, we assume that $x, y \in [z_j,
w_j]_\alpha$ for some $z_j, w_j \in \{z_j, w_j: 1 < j < k-1\}$. (The
case that $x, y \in [w_j, z_{j+1}]_\alpha$ is similar).

Using projection, we shall show that $\alpha \cap N_{\mu(1,0)} (X_k)
\neq \emptyset$. Suppose not. The length of $p_k$ is estimated as
follows:
\begin{equation}\label{L}
\begin{array}{rl}
\diam{\proj_k([\gamma_-, (p_k)_-]_\gamma)} & <  \diam{\proj_k([\alpha_-, z_j]_{\alpha_1})} + \diam{\proj_k ([(p_j)_-, z_j])} \\
& + \diam{\proj_k (p_j)} + \diam{\proj_k [(p_j)_+, w_j]}  \\
& + \diam{\proj_k ([w_j, (\alpha_1)_+]_{\alpha_1})} + \diam {\proj_k (p_{k-1} q_k)}  \\
&< 5\epsilon_{1,0} + B_{\lambda,c}.
\end{array}
\end{equation}

Similarly, we obtain that
\begin{equation}\label{R}
\diam{\proj([(p_k)_+, \gamma_+]_\gamma)} < 5\epsilon_{1,0} +
B_{\lambda,c}.
\end{equation}

Note that $[\alpha_-, \gamma_-]$ and $[\alpha_+, \gamma_+]$ are of
length at most $\mu_{1, 0}$. Hence we have
\begin{equation}\label{M}
\diam{\proj_k([\alpha_-, \gamma_-])} \le \epsilon + 3 \mu_{1, 0}, \;
\diam{\proj_k([\alpha_+, \gamma_+])} \le \epsilon + 3 \mu_{1, 0}.
\end{equation}

It follows from (\ref{L}), (\ref{M}) and (\ref{R}) that
$$\begin{array}{rl} \len(p_k) &< \diam{\proj_k([\gamma_-,
(p_k)_-]_\gamma)} + \diam{\proj_k([(p_k)_+, \gamma_+]_\gamma)} +
\diam{\proj_k(\alpha)} \\
& + \diam{\proj_k([\alpha_-, \gamma_-])} + \diam{\proj_k([\alpha_+, \gamma_+])} \\
& < 13\epsilon_{1,0} + 6\mu_{1, 0} + 2B_{\lambda,c}.
\end{array}$$
This gives a contradiction with $\len(p_k) > D$, as it is assumed
that
\begin{equation}\label{D5}
D > 13\epsilon_{1,0} + 6\mu_{1, 0} + 2B_{\lambda,c}.
\end{equation}

Hence $\alpha \cap N_{\mu(1,0)} (X_k) \neq \emptyset$. Let $z$ be
the first point of $\alpha$ such that $d(z, X_k) \leq \mu_{1,0}$.
Let $z'$ be a projection point of $z$ to $X_k$. Then it follows that
$$
\begin{array}{rl}
d(z, (p_k)_-) &< d(z,z') + d(z', (p_k)_-) \\
&< \mu_{1,0} + 5\epsilon_{1,0} + B_{\lambda,c} < R -1,
\end{array}$$
as it is assumed that
\begin{equation}\label{R3}
R > \mu_{1,0} + 5\epsilon_{1,0} + B_{\lambda,c} +1.
\end{equation}

Let $w$ be the last point of $\alpha$ such that $d(z, X_k) \leq
\mu_{1,0}$. Arguing in the same way, we see that $d(w, (p_k)_+) < R
-1$. This completes the proof.
\end{proof}

We now finish the proof of Proposition \ref{admissible}, which is
repeated applications of Lemma \ref{Rnear}.

\begin{proof}[Proof of Proposition \ref{admissible}]
Recall that $\gamma$ is a $(D, \lambda, c)$-admissible path with at
most $(n+1)$ contracting subsets. Let $\alpha$ be a geodesic such
that $\alpha_- = \gamma_-$ and $\alpha_+ = \gamma_+$. By Lemma
\ref{basestep}, we can assume that $n \ge 2$.

Consider a contracting subset $X_k$ for $\gamma$, where $0 < k < n$.
By Lemma \ref{Rnear}, there exist $z_k, w_k \in \alpha \cap
N_{\mu(1, 0)}$ such that $d(z_k, (p_k)_-) < R-1,\; d(w_k, (p_k)_+) <
R-1$.

Let $w_k'$ be a projection point of $w_k$ to $X_k$. Then $d(w_k,
w_k') \le \mu_{1, 0}$. We consider $j = k + 1$. Let $\gamma' =
[w_k', (p_k)_+][(p_k)_+, \gamma_+]_\gamma$ and $\alpha' = [w_k,
\alpha_+]_\alpha$.

Observe that $\gamma'$ is a $(D, \lambda, c)$-admissible path with
at most $n$ contracting subsets. Apply Lemma \ref{Rnear} to
$\gamma'$ and $\alpha'$. Then there exist points $z_j, w_j \in
\alpha' \cap N_{\mu(1,0)}(X_j)$ such that $d(z_j, (p_j)_-) <R-1,
d(w_j, (p_j)_+) < R-1$.

Continuously increasing or decreasing $k$, the points $z_j, w_j$ on
$\alpha$ are obtained to satisfy $d(z_j, (p_j)_-) <R-1,\; d(w_j,
(p_j)_+) < R-1$ for all $0 \le j \le n$.

The conclusion that $\alpha$ is a $R$-fellow traveller follows from
a slight modification $z_j, w_j$ such that $d(z_j, w_j)\ge 1,\;
d(z_j, (p_j)_-) <R, \; d(w_j, (p_j)_+) < R$. The proof is now
complete.
\end{proof}

\section{Fully quasiconvex subgroups}

In this section,  a finitely generated group $G$ is always assumed
to be hyperbolic relative to $\mathbb P = \{P_i: 1 \le i \le n\}$.
We refer the reader to \cite{Gro}, \cite{Farb}, \cite{Bow1},
\cite{DruSapir} and \cite{Hru} for the references on the relative
hyperbolicity of a group.

Given a finite generating set $S$, let $\Gx$ be the Cayley graph of
$G$ with respect to $S$. We denote by $\lab(\cdot)$ the label
function assigning for a combinatorial path in $\Gx$ the product of
generators labeling its edges in $G$.

Let $Y = \Gx$, $d$ be the combinatorial metric induced on the graph
$\Gx$ and $\mathbb X = \{gP: g \in G, P\in \mathbb P\}$. The
conjugate of a subgroup of $P \in \mathbb P$ is called a
\textit{parabolic subgroup}, a left $P$-coset a \textit{parabolic
coset}.

\subsection{Relatively quasiconvex subgroups} Relative
quasiconvexity of a subgroup has been extensively studied from
different points of view. See \cite{Hru} and \cite{GePo4} for the
equivalence of various definitions.

In this subsection, we shall recall a definition of relatively
quasiconvex subgroup in terms of the geometry of Cayley graphs. This
definition rather replies on the fact that $\mathbb X$ is a
contracting system with bounded intersection in $Y$.

\begin{lem}[Bounded intersection] \label{boundinter} \cite{DruSapir}
There exists a positive real-valued function $\nu: \mathbb R \to
\mathbb R_+$ such that any two distinct $gP, g'P' \in \mathbb X$
have $\nu$-bounded intersection.
\end{lem}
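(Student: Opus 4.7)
My plan is to prove the contrapositive using the asymptotic cone characterization of relative hyperbolicity due to Drutu--Sapir, which asserts that $G$ is hyperbolic relative to $\mathbb P$ if and only if every asymptotic cone of $G$ is a tree-graded space whose pieces are precisely the ultralimits of parabolic cosets. The essential feature I exploit is that in a tree-graded space, distinct pieces meet in at most one point.

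Suppose toward a contradiction that the conclusion fails. Then there exists $U \ge 0$ and sequences of distinct parabolic cosets $g_n P_n \ne g'_n P'_n$ together with points $x_n, y_n \in N_U(g_n P_n) \cap N_U(g'_n P'_n)$ with $D_n := d(x_n, y_n) \to \infty$. After left-translating we may assume $x_n = 1$; fix a non-principal ultrafilter $\omega$ and form the asymptotic cone $\mathrm{Con}^\omega(G, (1), (D_n))$ based at $1$ with scaling sequence $D_n$. Then the ultralimits
\[
L = \lim{}^\omega \frac{1}{D_n} g_n P_n, \qquad L' = \lim{}^\omega \frac{1}{D_n} g'_n P'_n
\]
are pieces of the tree-graded structure, and both contain the limit points $x_\infty$ (the basepoint) and $y_\infty$, which satisfy $d(x_\infty, y_\infty) = 1$.

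The main obstacle is to verify that $L \ne L'$, since otherwise the tree-graded property is not yet contradicted. This is a core content of Drutu--Sapir's correspondence: distinct parabolic cosets give rise to distinct pieces in the cone (if $L = L'$ along $\omega$, a standard argument shows $g_n P_n = g'_n P'_n$ for $\omega$-almost every $n$, contrary to our hypothesis). Granting this, $L \cap L' \supset \{x_\infty, y_\infty\}$ has at least two points, contradicting the defining property of a tree-graded space. Hence for every $U \ge 0$ there is a finite upper bound
\[
\nu(U) := \sup\bigl\{ \diam{N_U(gP) \cap N_U(g'P')} : gP \ne g'P' \in \mathbb X \bigr\} < \infty,
\]
and this yields the desired function $\nu: \mathbb R \to \mathbb R_+$. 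Alternatively, once the argument is set up, one could bypass the asymptotic cone machinery by invoking the equivalence in Lemma \ref{equivalence} together with the bounded projection of distinct parabolic cosets, which in turn follows from the BCP property in Farb's formulation; but the asymptotic cone route is the cleanest and matches the cited reference.
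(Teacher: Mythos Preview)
The paper gives no proof of this lemma; it is simply cited from Dru\c{t}u--Sapir, where the statement is one of the defining conditions (condition $(\alpha_1)$) of an asymptotically tree-graded space, and is shown there to follow from relative hyperbolicity.

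Your asymptotic-cone argument has a genuine gap at exactly the step you yourself flag as ``the main obstacle'': the claim that $g_nP_n \ne g'_nP'_n$ forces the ultralimit pieces $L$ and $L'$ to be distinct. Your justification --- that $L = L'$ would imply $g_nP_n = g'_nP'_n$ for $\omega$-almost every $n$ --- is not a standard fact and is not correct as stated. Equality of the ultralimits only says that on balls of radius $O(D_n)$ the two cosets are $o(D_n)$-Hausdorff close; this is perfectly compatible with the cosets being distinct for every $n$. The way one actually rules out $L = L'$ is precisely via the $\nu$-bounded intersection property $(\alpha_1)$: if distinct cosets have uniformly bounded coarse intersection, their rescaled ultralimits can share at most one point, hence cannot coincide as pieces of positive diameter. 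But that is the very statement you are trying to prove, so the argument is circular. Indeed, in Dru\c{t}u--Sapir's development the implication ``relatively hyperbolic $\Rightarrow$ cones tree-graded with pieces the ultralimits of cosets'' is established \emph{using} $(\alpha_1)$, not the other way around.

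Your closing parenthetical --- deducing the lemma from Lemma~\ref{equivalence} together with bounded projection of distinct parabolic cosets (equivalently, from the BCP property) --- is an honest and correct route, and is essentially what the cited reference does; but then the asymptotic-cone paragraph contributes nothing and should be dropped.
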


The notion of transition points was introduced by Hruska in
\cite{Hru} and further generalized by Gerasimov-Potyagailo in
\cite{GePo4}.
\begin{defn}
Let $p$ be a path in $\Gx$ and $v$ a point in $p$. Given $U >0,
L>0$, we say $v$ is \textit{$(U, L)$-deep} in some $X \in \mathbb X$
if $\diam{[v, p_-]_p \cap N_U(X)}>L$ and $\diam{[v, p_+]_p \cap
N_U(X)}
> L.$ If $v$ is not $(U, L)$-deep in any $X \in \mathbb X$, then $v$
is called a \textit{$(U, L)$-transition point} of $p$.
\end{defn}

\begin{rem}\label{unqiuedeep}
Let $v$ be a $(U, L)$-deep point. If $L > \nu(U)$, then $v$ is $(U,
L)$-deep in a unique $X \in \mathbb X$ by Lemma \ref{boundinter}.
\end{rem}

The following lemma is clear by the $\nu$-bounded intersection and
Remark \ref{unqiuedeep}. It roughly says that the points at which a
path exits a parabolic coset is transitional.
\begin{lem}\label{boundpt}
Given $X \in \mathbb X, U > 0$, let $z, w$ be the first and last
points on a path $p$ in $\Gx$ such that $d(z, X) \le U, d(w, X) \le
U$. If $d(z, w) > \nu(U)$, then $z, w$ are $(U, \nu(U))$-transition
points of $p$.
\end{lem}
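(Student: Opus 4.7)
I would prove Lemma \ref{boundpt} by contradiction, establishing that $z$ is a $(U,\nu(U))$-transition point; the argument for $w$ is symmetric. Assume instead that $z$ is $(U,\nu(U))$-deep in some $X'\in\mathbb X$, so that both $\diam{[p_-, z]_p\cap N_U(X')}>\nu(U)$ and $\diam{[z, p_+]_p\cap N_U(X')}>\nu(U)$.

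The first case to dispatch is $X'=X$. By the very choice of $z$ as the first point of $p$ with $d(\cdot,X)\le U$, no point of $[p_-, z)_p$ lies in $N_U(X)$, so $[p_-, z]_p\cap N_U(X)\subseteq\{z\}$ has diameter $0$. Since $\nu(U)\ge 0$, this immediately contradicts the left-side depth inequality in the definition.

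For $X'\ne X$, I would invoke bounded intersection (Lemma \ref{boundinter}) to obtain $\diam{N_U(X)\cap N_U(X')}<\nu(U)$ and then exhibit two points in $N_U(X)\cap N_U(X')$ at mutual distance exceeding $\nu(U)$, violating this bound. The natural candidates are $z$ and $w$, which already lie in $N_U(X)$ by hypothesis and satisfy $d(z,w)>\nu(U)$. It remains to show that both also lie in $N_U(X')$. For $z$, this follows from its two-sided $X'$-depth: both neighbouring subpaths intersect $N_U(X')$, forcing $z$ into $N_U(X')$. For $w$, I would argue that the right-side depth of $z$ in $X'$ provides a subpath of $[z,p_+]_p\cap N_U(X')$ of diameter greater than $\nu(U)$ which, because after $w$ the path $p$ permanently leaves $N_U(X)$ and because bounded intersection prevents this excursion from sitting inside $[z,w]_p\cap N_U(X)$, must reach past the point $w$ — forcing $w\in N_U(X')$.

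The main obstacle is pinning down the last geometric claim that $w\in N_U(X')$. My plan here is to apply Remark \ref{unqiuedeep} after upgrading $\nu(U)$ to some $L>\nu(U)$ (available from the strict diameter inequality in the depth assumption) to get uniqueness of the coset in which $z$ is $(U,L)$-deep, and then to combine this with the hypothesis $d(z,w)>\nu(U)$ and a short case analysis on where the $X'$-excursion sits relative to $w$ to force the contradiction with bounded intersection.
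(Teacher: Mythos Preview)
The paper gives no proof for this lemma beyond the remark that it is ``clear by the $\nu$-bounded intersection and Remark~\ref{unqiuedeep},'' so there is nothing detailed to compare against. Your treatment of the case $X'=X$ is correct and is exactly the one-line observation the paper has in mind: the choice of $z$ as the \emph{first} point in $N_U(X)$ forces $[p_-,z]_p\cap N_U(X)\subseteq\{z\}$, killing the left-hand depth condition.

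The case $X'\neq X$, however, has a genuine gap, and your own fallback plan does not close it. Two problems:

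\begin{itemize}
\item Your claim that ``two-sided depth forces $z\in N_U(X')$'' does not follow from the paper's literal definition of $(U,L)$-deep, which only asks that $[p_-,z]_p\cap N_U(X')$ and $[z,p_+]_p\cap N_U(X')$ have large diameter. Those intersections need not contain $z$. This step is valid only under the standard ``connected'' reading (a maximal subsegment of $p$ through $z$ lying in $N_U(X')$), which you should make explicit.

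\item More seriously, your argument for $w\in N_U(X')$ fails. You write that bounded intersection ``prevents this excursion from sitting inside $[z,w]_p\cap N_U(X)$,'' hence it must reach past $w$. But the lemma is stated for an arbitrary path $p$, and nothing forces $[z,w]_p\subset N_U(X)$: the path may leave $N_U(X)$ between $z$ and $w$. So the $X'$-excursion of diameter $>\nu(U)$ on $[z,p_+]_p$ can sit entirely inside $[z,w]_p\setminus N_U(X)$, contradicting neither bounded intersection nor the choice of $w$. Even if it did reach past $w$ in the order on $p$, that would not place $w$ in $N_U(X')$ unless the excursion is a connected segment through $w$.
\end{itemize}

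Your proposed rescue via Remark~\ref{unqiuedeep} does not help: that remark says that a point can be $(U,L)$-deep in at most one element of $\mathbb X$ when $L>\nu(U)$. But you have already shown $z$ is \emph{not} deep in $X$, so uniqueness of the coset in which $z$ is deep only reconfirms $X'\neq X$; it produces no contradiction.

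What actually makes the lemma work (and is implicit in the paper's one-line justification) is the connected reading of ``deep'' together with the quasiconvexity of $X\in\mathbb X$: taking the maximal connected segment $[a,b]_p\subset N_U(X')$ through $z$, one argues by cases on whether $b$ lies before or after $w$, using in the former case that the portion $[z,b]_p$ is trapped in a neighbourhood of $X$ (here is where quasiconvexity of $X$ and the nature of $p$ matter). Your write-up should make these hypotheses and this case split explicit rather than trying to force $w\in N_U(X')$ directly.
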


In terms of transition points, we can state a week Morse Lemma for a
pair of quasigeodesics in the Cayley graph $\Gx$. It was originally
proved in \cite{Hru} that the Hausdorff distance between the
transition points of an (absolute) geodesic and vertices of a
relative geodesic is bounded. The following general version
essentially follows from \cite[Proposition 5.2.3]{GePo4}.

\begin{lem}\label{transpoints}
Given $\lambda \ge 1, c\ge 0$, there exists a constant $U =
U(\lambda, c)>0$ such that the following holds for any two
$(\lambda, c)$-quasigeodesics $p, q$ in $\Gx$ with same endpoints.

For any $U_0 \ge U, L>\nu(U_0)$, there is a constant $R=R(U_0, L) >
0$ such that any $(U_0, L)$-transition point of $p$ has at most a
distance $R$ to a $(U_0, L)$-transition point of $q$.
\end{lem}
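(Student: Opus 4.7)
The plan is to leverage the admissible-path machinery of Section 2. Given a $(\lambda, c)$-quasigeodesic $p$ with endpoints $p_\pm$, I will build a skeleton $\hat p$ in which each deep excursion of $p$ into a parabolic coset is replaced by a geodesic inside that coset, verify that $\hat p$ is $(D_0, \lambda, c)$-admissible in the sense of Definition \ref{AdmDef}, and then invoke Proposition \ref{admissible} to fellow-travel $\hat p$ with any geodesic $\alpha$ between the same endpoints.

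More precisely, for each coset $X \in \mathbb X$ in which $p$ is $(U_0, L)$-deep, let $z_X, w_X$ be the first and last points of $p$ lying in $N_{U_0}(X)$. By Lemma \ref{boundpt} these are $(U_0, \nu(U_0))$-transition points of $p$, and by Remark \ref{unqiuedeep} the coset $X$ is uniquely determined by the excursion, so the excursions are linearly ordered along $p$. Replace $[z_X, w_X]_p$ by a short geodesic from $z_X$ to its projection on $X$, a geodesic inside $X$, and a short geodesic back to $w_X$. The intra-coset geodesic has length at least $L - 2U_0$, and the inter-excursion pieces are $(\lambda, c)$-quasigeodesic segments of $p$ which, by the definition of $w_X$ for the coset just exited and $z_X$ for the coset about to be entered, intersect the respective $\mu$-neighborhoods in bounded diameter, that is, are $\tau$-orthogonal for an appropriate $\tau = \tau(\lambda, c, U_0)$. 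The adjacent cosets in $\hat p$ are distinct and hence have $\nu$-bounded intersection by Lemma \ref{boundinter}, making clause (4) of Definition \ref{AdmDef} automatic. Choosing $U(\lambda, c)$ so that $L - 2U_0$ exceeds the admissibility threshold $\lambda D + c$ supplied by Proposition \ref{admissible} then makes $\hat p$ a $(D_0, \lambda, c)$-admissible path with the same endpoints as $p$.

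Applying Proposition \ref{admissible} (together with Corollary \ref{maincor}) produces $R_0 = R_0(\lambda, c)$ such that any geodesic $\alpha$ between $p_\pm$ is an $R_0$-fellow traveller for $\hat p$. Every $(U_0, L)$-transition point $v$ of $p$ lies outside all deep excursions and hence corresponds to a point of $\hat p$ on an inter-excursion (or terminal) piece; such a point is within distance $R_0$ of $\alpha$. Performing the same construction for $q$ and using the same geodesic $\alpha$ places a point $u$ of $q$ within distance $2R_0$ of $v$. The last step promotes $u$ to an actual $(U_0, L)$-transition point of $q$: if $u$ were $(U_0, L)$-deep in some $Y \in \mathbb X$, then $v$ would lie in a bounded neighborhood of $Y$, and combining this with the $\nu$-bounded intersection and the quasigeodesic hypothesis on $p$ would force $v$ itself to be deep in $Y$ once $L$ is taken sufficiently large relative to $\nu(U_0)$, a contradiction. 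The main technical obstacle will be the careful verification of the orthogonality clause for the inter-excursion segments, which is where the contracting and bounded-intersection constants of Section 2 enter the proof.
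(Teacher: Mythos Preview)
The paper does not actually prove this lemma; it is quoted from the literature, with the remark that it ``essentially follows from \cite[Proposition 5.2.3]{GePo4}'' (the geodesic case being due to Hruska \cite{Hru}). So there is no in-paper argument to compare against, and your attempt to derive the statement from the Section~2 machinery is a genuinely new route. Unfortunately it has a real gap.

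The central problem is the sentence ``such a point is within distance $R_0$ of $\alpha$.'' Proposition~\ref{admissible} only asserts that $\alpha$ is an $R$-fellow traveller for $\hat p$ \emph{in the sense of Definition~\ref{Fellow}}: there are points of $\alpha$ close to the endpoints $(p_i)_\pm$ of each intra-coset piece, i.e.\ close to the entry and exit points of the deep parabolic cosets. It says nothing about an arbitrary point $v$ sitting on an inter-excursion piece $q_i$, possibly far from every such vertex. Corollary~\ref{maincor} upgrades $\hat p$ to a $(\Lambda,0)$-quasigeodesic, but in a merely relatively hyperbolic space two quasigeodesics with common endpoints need not fellow-travel pointwise---that failure is exactly why transition points are introduced in the first place. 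Thus passing from ``$v$ lies on an inter-excursion segment whose endpoints are near $\alpha$'' to ``$v$ itself is near $\alpha$'' requires precisely the kind of control that Lemma~\ref{transpoints} is meant to supply, and the argument becomes circular. The cited proofs sidestep this by working through the hyperbolic relative Cayley graph (Hruska) or the Floyd metric (Gerasimov--Potyagailo), input that is not encoded in the abstract contracting-system framework of Section~2 alone. The same circularity undermines your promotion step at the end: knowing $d(u,v)\le 2R_0$ and that $u$ is $(U_0,L)$-deep in $Y$ puts $v$ in $N_{U_0+2R_0}(Y)$, but turning that into ``$v$ is $(U_0,L)$-deep in $Y$ along $p$'' again needs control of $p$ near $Y$ that you have not established.

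There is also a quantifier slip: you write ``choosing $U(\lambda,c)$ so that $L - 2U_0$ exceeds the admissibility threshold $\lambda D + c$,'' but in the statement $U$ is fixed first and only afterwards does the user hand you arbitrary $U_0 \ge U$ and $L > \nu(U_0)$; you have no leverage over $L - 2U_0$ when choosing $U$. This particular issue is repairable via the Remark following the lemma (transition-point sets for different admissible values of $L$ lie at bounded Hausdorff distance, by Lemma~\ref{boundpt}), but the fellow-travelling gap above is not.
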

\begin{rem}
Fix $U_0 \ge U(\lambda,c)$ and $L_1, L_2 > \nu(U_0)$. By Lemma
\ref{boundpt}, it is easy to see that the set of $(U_0,
L_1)$-transition points of a $(\lambda,c)$-quasigeodesic has a
bounded Hausdorff distance $H$ to the set of its $(U_0,
L_2)$-transition points, where $H$ depends on $L_1, L_2$ only.
\end{rem}

We are now ready to state the definition of relatively quasiconvex
subgroups.

\begin{defn} [Relative quasiconvexity]\label{quasiconvexdef}
Let $U = U(\lambda, c), L = \nu(U) + 1$, where $U(\cdot, \cdot),
\nu(\cdot)$ are given by Lemmas \ref{boundinter} and
\ref{transpoints} respectively. A subgroup $H$ of $G$ is called
\textit{relatively $M$-quasiconvex} for some $M >0$ if for any
$(\lambda,c)$-quasigeodesic $p$ in $\Gx$ with endpoints in $H$, any
$(U,L)$-transition point of $p$ lies in $M$-neighborhood of $H$.
\end{defn}

To close this subsection, we recall two results frequently used in
next sections. The first result is a general fact about the
intersection of two subgroups in a countable group.

\begin{lem}\label{intersection}\cite{Hru} \cite{MarPed2}
Suppose $H, K$ be subgroups of a countable group $G$. Let $d$ be a
left invariant proper metric on $G$. Then for any $H, gK, U
>0$, there exists a constant $\kappa=\kappa(H, gK, U)$ such
that $N_U (H) \cap N_U(gK) \subset N_\kappa (H \cap K^g) $.
\end{lem}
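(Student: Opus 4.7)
The plan is to use the properness of $d$ to reduce the problem to finitely many cases. Suppose $x \in N_U(H) \cap N_U(gK)$. Then we may write $x = hs = gkt$ where $h \in H$, $k \in K$, and $s, t \in G$ satisfy $d(1,s) \le U$ and $d(1,t) \le U$ (using left invariance to translate the defining inequalities $d(x,h) \le U$ and $d(x,gk) \le U$ back to the identity). From $hs = gkt$ we obtain
$$h^{-1}gk = st^{-1},$$
and by left invariance plus the triangle inequality, $d(1, h^{-1}gk) \le d(1,s) + d(1,t^{-1}) \le 2U$.

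Since $d$ is proper, the closed ball $B(1, 2U)$ is finite. Hence, as $x$ ranges over $N_U(H) \cap N_U(gK)$, the element $h^{-1}gk$ can take only finitely many values $a_1, \ldots, a_m$. For each such value $a_i$, fix once and for all a pair $(h_i, k_i) \in H \times K$ with $h_i^{-1} g k_i = a_i$.

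Now the key algebraic observation: given $x$ with $h^{-1}gk = a_i = h_i^{-1}gk_i$, we obtain
$$h h_i^{-1} = g k_i k^{-1} g^{-1} \in gKg^{-1} = K^g,$$
and of course $h h_i^{-1} \in H$, so $h h_i^{-1} \in H \cap K^g$. This gives us an explicit element of $H \cap K^g$ associated to $x$. Estimating its distance to $x$ using left invariance,
$$d(x, h h_i^{-1}) \le d(x, h) + d(h, h h_i^{-1}) \le U + d(1, h_i^{-1}).$$
Setting $\kappa := U + \max_{1 \le i \le m} d(1, h_i^{-1})$, which is finite and depends only on $H$, $gK$, and $U$ (since the $h_i$ are chosen once and for all), we conclude $x \in N_\kappa(H \cap K^g)$, as desired.

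There is no real obstacle; the only subtlety is to be careful that the chosen coset representatives $h_i$ are fixed independently of $x$, so that $\kappa$ is a genuine constant and not something depending implicitly on the point we are trying to approximate.
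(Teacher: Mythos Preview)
Your proof is correct and is essentially the standard argument for this lemma. The paper itself does not supply a proof: the statement is quoted with citations to Hruska and Mart\'{i}nez-Pedroza, and your argument is the one found in those references. One tiny slip: from $h^{-1}gk = h_i^{-1}gk_i$ you actually get $h h_i^{-1} = g\,k k_i^{-1}\,g^{-1}$ rather than $g\,k_i k^{-1}\,g^{-1}$, but since $K^g$ is a subgroup this does not affect the conclusion.
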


The next result is well-known but we could not locate a reference in
the literature. Hence a proof is given here for completeness.
\begin{lem}[Long parabolic intersection]\label{longparabolic}
Suppose $H$ is relatively quasiconvex in $G$.  Given a constant $U >
0$, there exists a constant $L = L(H, U)$ such that if $\diam
{N_U(gP) \cap H} > L$ for some $g \in G, P \in \mathbb P$, then
$\card{H \cap P^g} = \infty$.
\end{lem}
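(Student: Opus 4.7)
The plan is to argue by contradiction. If no such $L$ exists, then we get sequences $g_n \in G$ and $P_n \in \mathbb P$ together with elements $h_n^1, h_n^2 \in H \cap N_U(g_n P_n)$ with $d(h_n^1, h_n^2) \to \infty$, yet $H \cap P_n^{g_n}$ finite for every $n$. Since $\mathbb P$ is a finite collection, passing to a subsequence we may take $P_n \equiv P$.

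The key step is a normalization by left translation in $H$. Since $h_n^1 \in N_U(g_nP)$, write $g_n = h_n^1 s_n p_n^{-1}$ with $p_n \in P$ and $|s_n| \le U$; then $g_n P = h_n^1 s_n P$. Left multiplication by $(h_n^1)^{-1}$ is an isometry of $\Gx$ which fixes $H$ setwise (because $h_n^1 \in H$), so
\[
\diam\bigl( H \cap N_U(g_nP) \bigr) = \diam\bigl( H \cap N_U(s_nP) \bigr) \longrightarrow \infty.
\]
Moreover $g_n P g_n^{-1} = h_n^1 (s_n P s_n^{-1}) (h_n^1)^{-1}$, and conjugation by $h_n^1 \in H$ carries $H \cap s_n P s_n^{-1}$ to $H \cap g_nPg_n^{-1}$ bijectively, so $H \cap P^{s_n}$ remains finite. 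Now the ball of radius $U$ in $\Gx$ is finite, so after one further extraction we may assume $s_n \equiv s$ is constant.

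Finally, Lemma \ref{intersection} applied to $H$ and $sPs^{-1}$ with the invariant combinatorial metric furnishes a \emph{single} constant $\kappa = \kappa(H, sP, U)$ with
\[
N_U(H) \cap N_U(sP) \subset N_\kappa(H \cap P^s).
\]
Because $H \cap P^s$ is finite, the right-hand side has bounded diameter, directly contradicting $\diam(H \cap N_U(sP)) \to \infty$.

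The main obstacle is the normalization step: Lemma \ref{intersection} only supplies a constant $\kappa$ depending on the specific coset, so one cannot apply it directly to the sequence $g_nP$. The role of the $H$-translation trick is precisely to reduce the infinitely many cosets $g_nP$ to finitely many of the form $sP$ with $|s|\le U$, while simultaneously preserving both the diameter hypothesis and the finiteness of the parabolic intersection. (Note that the relative quasiconvexity of $H$ is not explicitly invoked; it enters only through its role in guaranteeing that the dichotomy finite/infinite for $H \cap P^g$ is the meaningful one in our context.)
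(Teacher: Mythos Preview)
Your proof is correct and follows essentially the same approach as the paper: both use the $H$-translation trick to replace the arbitrary coset $gP$ by one of the finitely many cosets $sP$ with $|s|\le U$, then invoke Lemma~\ref{intersection} on that fixed coset. The only difference is packaging---the paper constructs $L$ explicitly (taking the maximum of $\kappa(H,sP,U)$ over $s$ in the $U$-ball and then bounding the diameter of the finitely many finite intersections $H\cap P^s$), whereas you phrase the same finiteness via a contradiction-and-subsequence argument; your closing observation that relative quasiconvexity is not actually used is also correct, and the paper's proof likewise does not invoke it.
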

\begin{proof}
Given $U >0$, let $B = \{g: d(1, g) \le U\}$ and $A = \max
\{\kappa(H, gP, U): g \in B, P \in \mathbb P\}$, where $\kappa$ is
the function given by Lemma \ref{intersection}. Consider the finite
collection $\mathbb F = \{H \cap P^g: \card{H \cap P^g} < \infty, g
\in B, P \in\mathbb P\}$. Set $L = \max \{d(1, g): g \in N_A(\cup_{F
\in \mathbb F} F)\}$. We claim that $L$ is the desired constant.

Let $h \in N_U(gP) \cap H$ and $p \in P$ such that $d(h, gp) < U$.
Set $g_0 = h^{-1}gp$. Note that $\diam {N_U(gP) \cap H} = \diam
{N_U(g_0P) \cap H}$, where $d(1, g_0)<U$. Hence by Lemma
\ref{intersection}, $N_U(g_0P) \cap H \subset N_A(P^{g_0} \cap H)$.
This implies that if $\diam {N_U(g_0P) \cap H} > L$, then $P^{g_0}
\cap H$ is infinite. Hence, $P^g \cap H$ is infinite.
\end{proof}

\subsection{Fully quasiconvex subgroups}
The notion of a fully quasiconvex subgroup is the central object in
next sections.
\begin{defn}[Fully quasiconvex subgroups]
Let $H$ be relatively quasiconvex in $G$. Then $H$ is said to be
\textit{fully quasiconvex} if $H \cap P^g$ is either finite or of
finite index in $P^g$ for each $g \in G, P \in \mathbb P$.
\end{defn}

The fundamental fact in this study is that a fully quasiconvex
subgroup is a contracting subset. In particular, a collection of
left cosets of a fully quasiconvex subgroup is a contracting system
which we will focus on in next sections.

\begin{lem}\label{projlem} \cite[Proposition 8.2.4]{GePo4}
Let $H$ be fully quasiconvex in $G$. For any $\lambda \geq 1, c \geq
0$, there exist positive constants $\mu=\mu(\lambda, c)$ and
$\epsilon=\epsilon(\lambda, c)$ such that for any
$(\lambda,c)$-quasigeodesic $\gamma$ satisfying $d(\gamma, H) \ge
\mu(\lambda,c)$ in $\Gx$, we have $\diam{\proj_H \gamma} <
\epsilon(\lambda,c)$.
\end{lem}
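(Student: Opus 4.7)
The plan is to argue by contradiction, producing a configuration that violates the full quasiconvexity of $H$. Suppose that for arbitrarily large constants there is a $(\lambda,c)$-quasigeodesic $\gamma$ with $d(\gamma,H)\ge \mu$ and $\diam{\proj_H(\gamma)} \ge \epsilon$. Pick $x,y\in\gamma$ with $h_x=\proj_H(x)$ and $h_y=\proj_H(y)$ realizing (up to $\delta$) the full projection diameter. I would then form the bent path $\alpha = [h_x,x] \cdot [x,y]_\gamma \cdot [y,h_y]$, where the outer legs are geodesics. A direct estimate using the defining property of projection (if $z\in[h_x,x]$ then $d(z,H)\ge d(z,h_x)$) shows that each outer leg enters $N_U(H)$ only in an $O(U)$-neighborhood of its endpoint in $H$; informally, the outer legs are orthogonal to $H$.

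The first serious step is to show that $\alpha$ is itself a $(\lambda',c')$-quasigeodesic, with $\lambda',c'$ depending only on $\lambda,c$. The point is that any backtracking of $\alpha$ would force $[h_x,x]$ and $[y,h_y]$ to come close to each other, which via the near-orthogonality to $H$ and the triangle inequality would give $d(h_x,h_y)$ much smaller than what $\diam{\proj_H(\gamma)}$ forces. With $\alpha$ established as a quasigeodesic whose endpoints lie in $H$, relative quasiconvexity of $H$ (Definition \ref{quasiconvexdef}) gives a constant $M$ such that every $(U,L)$-transition point of $\alpha$ lies in $N_M(H)$.

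The middle piece $[x,y]_\gamma$ has distance $\ge \mu$ from $H$; choosing $\mu > M$, no point on $[x,y]_\gamma$ can be a transition point of $\alpha$, so every such point is $(U,L)$-deep in some parabolic coset. By Lemma \ref{boundinter} (bounded intersection of distinct parabolic cosets) and a straightforward pigeonhole on the length $d(x,y)$, for $\epsilon$ large one obtains a single parabolic coset $g_0P_0$ whose $U$-neighborhood contains a very long subsegment $[x',y']$ of $\alpha$. The entry/exit points $x',y'$ of $\alpha$ into $N_U(g_0P_0)$ are $(U,L)$-transition points of $\alpha$ and hence lie in $N_M(H)$, which forces $\diam{N_{U+M}(g_0P_0)\cap H}$ to be arbitrarily large as $\epsilon$ grows.

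The contradiction is then closed using full quasiconvexity. By Lemma \ref{longparabolic}, a sufficiently long intersection $N_{U+M}(g_0P_0)\cap H$ forces $H\cap P_0^{g_0}$ to be infinite, hence by full quasiconvexity of finite index in $P_0^{g_0}$; this yields a constant $K=K(H)$ with $g_0P_0 \subset N_K(H)$. But $\gamma$ passes through $N_U(g_0P_0)$, so $d(\gamma,H)\le U+K$, contradicting $d(\gamma,H)\ge \mu$ as soon as $\mu > U+K+M$. The main obstacle is the quasigeodesicity of $\alpha$ in the first step: it is the one place where one must genuinely exploit the projection-orthogonality, whereas the remainder is a clean combination of transition-point geometry with the finite-or-finite-index dichotomy of full quasiconvexity.
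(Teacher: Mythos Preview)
The paper does not actually prove this lemma; it is quoted verbatim as \cite[Proposition 8.2.4]{GePo4} and used as a black box. So there is no ``paper's own proof'' to compare against, and your proposal should be judged on its own merits. The overall architecture---pull back to a path with endpoints in $H$, use relative quasiconvexity to force transition points near $H$, deduce the middle of $\gamma$ is deep in a single parabolic coset, then invoke Lemmas \ref{longparabolic} and \ref{fullparabolic} to get $g_0P_0\subset N_K(H)$ and contradict $d(\gamma,H)\ge\mu$---is sound and is essentially the right picture. Once you grant that $\alpha$ is a $(\lambda',c')$-quasigeodesic with $\lambda',c'$ depending only on $\lambda,c$, the remaining steps go through: in particular your pigeonhole in step 6 works (if every point of $[x,y]_\gamma$ is $(U,L)$-deep with $L>\nu(U)$ then all are deep in the \emph{same} coset), and the entry/exit points $x',y'$ are forced within $M+\delta$ of $h_x,h_y$ respectively, so $d(x',y')\ge \epsilon-O(1)$ and the long-intersection argument fires.

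The genuine gap is exactly where you flag it, but your one-line justification does not close it. You claim that any backtracking of $\alpha$ would force the two outer legs $[h_x,x]$ and $[y,h_y]$ close to each other. That is not the only failure mode: $\alpha$ could fail to be a uniform quasigeodesic because $[h_x,x]$ backtracks against the \emph{middle} segment $[x,y]_\gamma$, with $[y,h_y]$ playing no role. Concretely, nothing you have said rules out a point $p\in[h_x,x]$ with $d(p,h_x)\gtrsim \mu$ and a point $q\in[x,y]_\gamma$ far along $\gamma$ with $d(p,q)$ small; both lie at distance $\gtrsim\mu$ from $H$, so ``near-orthogonality to $H$'' gives no leverage. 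In a hyperbolic space this is handled by the contracting property of quasiconvex sets---but that is precisely the statement you are trying to prove, so invoking it here would be circular. To repair the argument you need an independent input from relative hyperbolicity: either pass to the cusped (Groves--Manning) space where $H$ together with its horoballs is genuinely quasiconvex in a Gromov-hyperbolic space and the bent path \emph{is} a quasigeodesic there, or bypass $\alpha$ entirely by comparing $\gamma$ to a geodesic $[h_x,h_y]$ via the relative thinness of quadrilaterals (bounded coset penetration). The Gerasimov--Potyagailo proof uses yet another route, via Floyd functions and Karlsson's lemma; any of these would work, but the quasigeodesicity of $\alpha$ in $\Gx$ with constants independent of $\mu$ is not a triviality and needs a real argument.
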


\begin{rem}
Note that maximal parabolic subgroups are fully quasiconvex. Hence
by Lemma \ref{boundinter}, $\mathbb X$ is a $(\epsilon,
\mu)$-contracting system with $\nu$-bounded intersection.
\end{rem}

The following lemma is easy exercise by the definition of full
quasiconvexity.
\begin{lem}\label{fullparabolic}
Suppose $H$ is fully quasiconvex in $G$. Then there exists a
constant $U = U(H)$ such that if $\card{H \cap P^g} = \infty$ for
some $P \in \mathbb P, g \in G$ , then $gP \subset N_U(H)$.
\end{lem}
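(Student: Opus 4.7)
The plan is to reduce the general configuration $(g,P)$ with $\card{H \cap P^g} = \infty$ to one of finitely many ``model'' configurations using the $H$-action, and then bound each model case via finite-index coset representatives.

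First I would invoke the standard fact that the relatively quasiconvex subgroup $H$ inherits a relatively hyperbolic structure with only finitely many $H$-conjugacy classes of infinite subgroups of the form $H \cap P^{g'}$ (see \cite{Hru}). Pick representatives $Q_i = H \cap P_i^{g_i}$ for $1 \le i \le m$; by full quasiconvexity each $Q_i$ has finite index in $P_i^{g_i}$.

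Next, given any $(g, P)$ with $\card{H \cap P^g} = \infty$, there exist $h \in H$ and an index $i$ so that $h(H \cap P^g)h^{-1} = Q_i$, equivalently $H \cap P^{hg} = Q_i$. The infinite subgroup $Q_i$ lies in both $P^{hg}$ and $P_i^{g_i}$, so the almost malnormality of $\mathbb P$ (a standard consequence of relative hyperbolicity) forces $P^{hg} = P_i^{g_i}$. Since distinct members of $\mathbb P$ are pairwise non-conjugate in $G$, this gives $P = P_i$ and $g_i^{-1} h g \in N_G(P)$. Almost malnormality also yields $N_G(P) = P$ for infinite parabolic $P$, hence $hg \in g_i P$ and therefore $gP = h^{-1} g_i P_i$.

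Finally, since left translation by $h \in H$ preserves $N_U(H)$, it is enough to exhibit, for each $i$, a constant $U_i$ with $g_i P_i \subset N_{U_i}(H)$. Choose a finite set of right coset representatives $F_i \subset P_i^{g_i}$ so that $P_i^{g_i} = Q_i F_i$. Using the identity $g_i P_i = P_i^{g_i} g_i$, every element of $g_i P_i$ has the form $q f g_i$ with $q \in Q_i \subset H$ and $f \in F_i$, so $d(qfg_i, H) \le d(qfg_i, q) = d(1, fg_i) \le U_i := \max_{f \in F_i} d(1, fg_i)$. Setting $U = \max_i U_i$ gives a constant depending only on $H$. The main (and essentially only) obstacle is the finiteness assertion in the first step; once the finitely many model configurations are in hand, the rest is routine coset bookkeeping combined with the almost malnormality properties of $(G, \mathbb P)$.
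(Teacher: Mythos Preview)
Your proof is correct. The paper does not actually supply a proof of this lemma, remarking only that it is an ``easy exercise by the definition of full quasiconvexity''; your argument---reducing via the finiteness of $H$-conjugacy classes of infinite parabolic intersections (from \cite{Hru}) and almost malnormality to finitely many model cosets $g_iP_i$, and then bounding each by finite-index coset representatives---is the standard way to carry out that exercise.
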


We now come to the key fact that enables us to build a normal form
in the combination theorem in Section 4.
\begin{lem}[Orthogonality]\label{ksideproj}
Suppose $H$ is relatively quasiconvex and $K$ is fully quasiconvex
in $G$. For any constant $U> 0$, there exists a constant
$\tau=\tau(U) > 0$ such that the following statement is true.

Let $h \in H$ be such that $d(1,h) = d(1, C h C)$, where $C = H \cap
K$. Then for any geodesic $q$ between $1$ and $h$, we have $\diam{q
\cap N_U(K)} < \tau(U)$ and $\diam{q \cap N_U(hK)} < \tau(U)$.
\end{lem}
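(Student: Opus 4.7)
\textbf{Proof plan for Lemma \ref{ksideproj}.} The strategy is a contradiction argument: if $q$ lingers near $K$ (or $hK$) beyond a threshold $\tau(U)$, then we can extract an element of $C = H\cap K$ with which to shorten $h$ in its double coset $ChC$, contradicting the hypothesis $d(1,h)=d(1,ChC)$.

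First I handle the $K$-side. Since $1 \in K$, the geodesic $q$ starts in $N_U(K)$. Let $w$ be the last point of $q$ lying in $N_U(K)$; because $q$ is a geodesic containing $1$, we have $d(1,w) = \diam{q\cap N_U(K)}$. Choose $\tau(U)$ much larger than $\nu(U)$, so that Lemma \ref{boundpt} identifies $w$ as a $(U,\nu(U))$-transition point of $q$. Because the endpoints of $q$ lie in $H$, relative quasiconvexity of $H$ (Definition \ref{quasiconvexdef}), combined with the standard comparison between $(U_0,L_1)$- and $(U_0,L_2)$-transition points noted after Lemma \ref{transpoints}, places $w$ in $N_M(H)$ for a uniform constant $M=M(U)$. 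Hence $w\in N_{U'}(H)\cap N_{U'}(K)$ with $U'=\max\{U,M\}$. By the intersection lemma (Lemma \ref{intersection}) applied to $H$ and $K$, there is $\kappa=\kappa(H,K,U')$ and some $c\in C$ with $d(w,c)\le \kappa$. Now $c^{-1}h\in ChC$, and since $w$ lies on the geodesic $q$,
\[
d(1,c^{-1}h)\le d(c,w)+d(w,h)\le \kappa + \bigl(d(1,h)-d(1,w)\bigr),
\]
which is strictly less than $d(1,h)$ as soon as $d(1,w)>\kappa$. Choosing $\tau(U)$ larger than both $\nu(U)$ and $\kappa$ forces this strict inequality and contradicts the minimality of $d(1,h)$ in $ChC$.

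For the $hK$-side, the key observation is that $h\in H$ implies $h^{-1}H=H$, so the translated geodesic $h^{-1}q$ runs between two points of $H$ and the condition $w'\in q\cap N_U(hK)$ is equivalent to $h^{-1}w'\in (h^{-1}q)\cap N_U(K)$. Thus I can reduce the $hK$ case to the previous case: let $w'$ be the first point of $q$ in $N_U(hK)$ (noting $h\in hK$), apply the argument above to $h^{-1}q$ to produce $c\in C$ with $d(h^{-1}w',c)\le \kappa$, equivalently $d(w',hc)\le\kappa$; then $hc\in ChC$ satisfies $d(1,hc)<d(1,h)$, again contradicting minimality.

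The main technical issue is ensuring the constants $M$ and $\kappa$ are uniform in the element $h$ rather than depending on the coset $hK$. This is precisely why the translation trick $h^{-1}q$ is essential: it replaces the potentially unbounded-in-$h$ coset $hK$ by the fixed subgroup $K$, so that Lemma \ref{intersection} produces $\kappa=\kappa(H,K,U')$ independent of $h$. Setting $\tau(U)$ to be any constant larger than the finitely many thresholds arising above (essentially $\max\{\nu(U),\kappa\}$ plus the slack needed to pass between the various transition-point parameters) yields the required bound.
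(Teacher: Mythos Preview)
Your reduction of the $hK$-side to the $K$-side via the translation $h^{-1}q$ is correct and matches the paper. The gap is in your treatment of the $K$-side.

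You invoke Lemma \ref{boundpt} to conclude that the last point $w$ of $q$ in $N_U(K)$ is a transition point. But Lemma \ref{boundpt} is stated for $X\in\mathbb X$, the system of \emph{parabolic cosets}; $K$ is a fully quasiconvex subgroup, not in general a parabolic coset, so the lemma does not apply. More to the point, the conclusion can genuinely fail. Suppose $|K\cap P^g|=\infty$ for some $gP\in\mathbb X$. By full quasiconvexity (Lemma \ref{fullparabolic}) we have $gP\subset N_{U_1}(K)$ for a uniform $U_1$, so $q$ may travel a long way inside $N_{U_0}(gP)\subset N_{U_0+U_1}(K)$ while already having left the smaller neighbourhood $N_U(K)$. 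Your point $w$ is then $(U_0,L_0)$-\emph{deep} in $gP$, and relative quasiconvexity of $H$ gives no control on $d(w,H)$ at deep points. Since $H$ is only relatively (not fully) quasiconvex, knowing $|H\cap P^g|=\infty$ does not help either.

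The paper's proof handles exactly this scenario. It lets $z$ be your last point of $q$ in $N_U(K)$, takes the last transition point $w$ in $[1,z]_q$, and bounds $d(1,w)$ by precisely your shortening argument. It must then separately bound $d(w,z)$: if this is large, $z$ is deep in some $gP$, and one shows both $|K\cap P^g|=\infty$ and $|H\cap P^g|=\infty$ via Lemma \ref{longparabolic}. The exit point $o$ of $q$ from $N_{U_0}(gP)$ beyond $z$ is then a genuine transition point lying in $N_M(H)\cap N_{U_0+U_1}(K)$, hence close to some element of $C$; minimality of $h$ in $ChC$ bounds $d(1,o)$, and since $o$ lies beyond $z$ on the geodesic this bounds $d(1,z)$ as well. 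Your argument captures the first half of this but omits the parabolic-coset analysis needed for the second.
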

\begin{proof}
By assumption, we have that $d(1,h) = d(1, hC)$ and $d(1,h) = d(1,
Ch)$. It is easy to see that $d(1,h) = d(1, hC)$ implies that
$d(1,h^{-1}) = d(1, Ch^{-1})$. Observe that the verification of the
inequality $\diam{q \cap N_U(hK)} < \tau$ can be reduced to
$\diam{h^{-1}q \cap N_U(K)} < \tau$, where $h^{-1}q$ is a geodesic
between $h^{-1}$ and $1$. Hence, it suffices to verify the following
claim.

\begin{figure}[htb] % You can use the htb to tell latex: place here if possible, if not try top of page, if not try bottom of page.
\centering \scalebox{0.7}{
\includegraphics{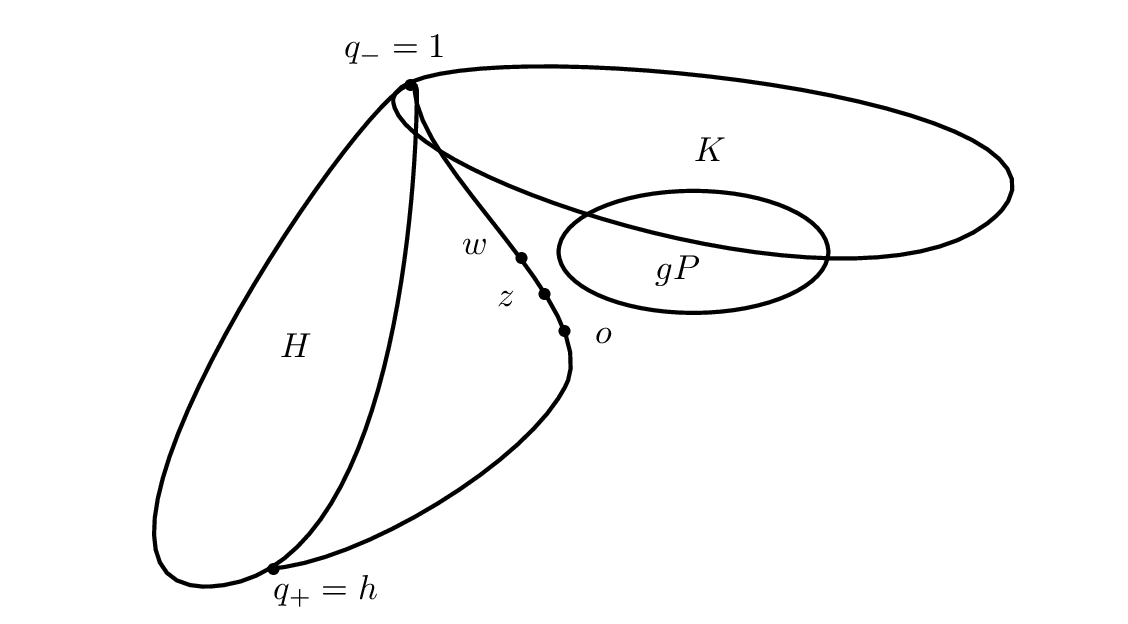} % Do not put file extension, only name
} \caption{Proof of Lemma \ref{ksideproj}} \label{fig:fig3}
\end{figure}
\end{proof}

\begin{claim}
Let $d(1, h) = d(1, Ch)$. Then $\diam{q \cap N_U(K)} < \tau$.
\end{claim}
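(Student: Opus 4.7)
My plan is to argue by contradiction: if the intersection $q \cap N_U(K)$ were too long, I would find an element $c \in C$ with $d(1, c^{-1}h) < d(1,h)$, contradicting $d(1,h) = d(1,Ch)$. The two tools that do the work are the relative quasiconvexity of $H$ (which forces transition points of $q$ near $H$) and Lemma \ref{intersection} (which pushes a point lying in both $N_U(H)$ and $N_U(K)$ into a bounded neighborhood of $C = H \cap K$).

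Concretely, I would first enlarge $U$ if necessary to assume $U \geq U(1,0)$, where $U(1,0)$ is the constant from Lemma \ref{transpoints} applied to geodesics; this only strengthens the conclusion. Let $z$ and $w$ be the first and last points of $q$ in $N_U(K)$. Since $q$ is a geodesic, $\diam{q \cap N_U(K)} = d(z,w) = d(1,w) - d(1,z)$. If $d(z,w) \leq \nu(U)$ we are already done, so assume $d(z,w) > \nu(U)$. By Lemma \ref{boundpt}, both $z$ and $w$ are then $(U, \nu(U))$-transition points of $q$, and by the remark following Lemma \ref{transpoints} they lie within a bounded Hausdorff distance of the $(U, \nu(U)+1)$-transition points of $q$. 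Relative $M$-quasiconvexity of $H$ (whose endpoints of $q$ lie in) then places $z$ and $w$ in $N_{M'}(H)$ for some $M' = M'(H, U)$.

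Now $z, w \in N_{U'}(H) \cap N_{U'}(K)$ with $U' = \max(U, M')$. By Lemma \ref{intersection} there is a constant $\kappa = \kappa(H, K, U')$ such that $z, w \in N_\kappa(C)$. Pick $c \in C$ with $d(c, w) \leq \kappa$. Then
\[
d(1, c^{-1}h) \;=\; d(c, h) \;\leq\; d(c, w) + d(w, h) \;\leq\; \kappa + d(w, h) \;=\; \kappa + d(1,h) - d(1,w).
\]
The hypothesis $d(1,h) = d(1, Ch)$ forces $d(1, c^{-1}h) \geq d(1,h)$, so the inequality above collapses to $d(1, w) \leq \kappa$. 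Since $d(1,w) \geq d(z,w)$, we conclude $d(z,w) \leq \kappa$. Setting $\tau(U) = \max\{\nu(U), \kappa(H,K,U')\} + 1$ completes the proof of the Claim, and the Lemma follows by the reduction already recorded just above the Claim.

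The only subtle point I foresee is the bookkeeping in passing from $(U,\nu(U))$-transition points to the $(U,\nu(U)+1)$-transition points required by the definition of relative quasiconvexity; everything else is a direct chain of the quoted lemmas.
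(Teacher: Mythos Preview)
Your argument has a genuine gap at the step where you invoke Lemma~\ref{boundpt} with $X = K$. That lemma is stated and proved only for $X \in \mathbb X = \{gP : g \in G,\ P \in \mathbb P\}$; its proof uses the $\nu$-bounded intersection of $X$ with the other parabolic cosets. A fully quasiconvex subgroup $K$ need \emph{not} have bounded intersection with all parabolic cosets: whenever $K \cap P^{g}$ has finite index in $P^{g}$, Lemma~\ref{fullparabolic} gives $gP \subset N_{U_1}(K)$, so $N_U(K) \cap N_U(gP)$ can be unbounded. Consequently the last point $w$ of $q$ in $N_U(K)$ may very well be $(U,\nu(U))$-deep in such a coset $gP$, and you cannot conclude that $w$ is a transition point. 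Without that, relative quasiconvexity of $H$ gives you no control on $d(w,H)$, and the chain leading to $w \in N_\kappa(C)$ breaks.

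This is exactly the obstruction the paper's proof is designed to handle. There one takes the last point $z \in q \cap N_U(K)$ and then the last \emph{genuine} transition point $w$ on $[1,z]_q$; your minimality trick bounds $d(1,w)$ just as you wrote. The remaining segment $[w,z]_q$ may be long, but then $z$ is deep in a single $gP$, and one uses Lemmas~\ref{longparabolic} and~\ref{fullparabolic} (full quasiconvexity of $K$) to show $gP \subset N_{U_1}(K)$ and $\card{H \cap P^{g}} = \infty$. The exit point $o$ of $q$ from $N_{U_0}(gP)$ beyond $z$ is then a transition point lying in both $N_M(H)$ and $N_{U_0+U_1}(K)$, and your minimality trick applied to $o$ finishes the bound. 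So your overall strategy is right, but you are missing precisely this ``deep parabolic'' case, which requires the extra input of $K$ being fully quasiconvex.
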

\begin{proof}[Proof of Claim]
Since $H$ is relatively quasiconvex, there exist $U_0, L_0, M$ such
that any $(U_0, L_0)$-transition point of $q$ lies in $N_M(H)$.

Given $U > 0$, let $z$ be the last point on $q$ such that $d(z, K)
\le U$. Since $K$ is fully quasiconvex, we have $d(z, K) \le
\sigma(U)$, where $\sigma$ is given by Lemma \ref{quasiconvexity}.
See Figure \ref{fig:fig3}.

Let $w$ be the last $(U_0, L_0)$-transition point of $q$ such that
$w \in [1, z]_q$. Hence $d(w, H) \le M$. By Lemma
\ref{intersection}, there exists $w' \in H\cap K = C$ such that
$d(w, w') \le \kappa(H, K, M+\sigma(U))$. Since $d(1, h) = d(1,
Ch)$, it follows that $d(1, w) \le \kappa(H, K, M+\sigma(U))$. In
fact, if $d(1, w) > d(w', w)$, then we obtain $$d(1, h) =d(1, w) +
d(w, h)
> d(w', w) + d(w, h) \ge d(1, w'^{-1}h),$$ contradicting the choice
of $h$.

Without loss of generality, we are going to bound $d(1, z)$ under
the assumption that $d(w, z) > \max\{L(H, U), L(K, U)\}$. Note that
$L(H, U), L(K, U)$ are given by Lemma \ref{longparabolic}.

Since $w$ is the last $(U_0, L_0)$-transition point in $[1, z]_q$.
Then $z$ is $(U_0, L_0)$-deep in a unique $gP \in \mathbb X$. In
particular, it follows that $w \in N_{U_0}(gP)$. Indeed, the first
point of $[1, z]_q$ such that $z \in N_{U_0}(gP)$ is a $(U_0,
L_0)$-transition point of $q$ by Lemma \ref{boundpt}.

Since $d(z,w) > L(K, U)$, we see that $\card {K \cap P^g} = \infty.$
Then $gP \subset N_{U_1}(K)$, where $U_1 = U(K)$ is given by Lemma
\ref{fullparabolic}.

Let $o$ be the last point on $[z, q_+]_q$ such that $o \in
N_{U_0}(gP)$. Then $o$ has to be a $(U_0, L_0)$-transition point by
Lemma \ref{boundpt}. By the relative quasiconvexity of $H$, we have
$d(o, H) \le M$. Since $d(o, w) > d(z,w) >L(H, U)$, it follows by
Lemma \ref{longparabolic} that $\card {H \cap P^g} = \infty.$

Note that $o \in N_M(H) \cap N_{U_0}(gP) \subset N_M(H) \cap
N_{U_0+U_1}(K)$. By Lemma \ref{intersection}, there is $o' \in H
\cap K =C$ such that $d(o, o') \le \kappa(H, K, M+U_0+U_1)$.

Since $d(1, h) = d(1, Ch)$, we obtain as above that $d(1, o) \le
d(o, o') \le \kappa(H, K, M+U_0+U_1)$. Hence $d(1, z) \le d(1, o)
\le \kappa(H, K, M+U_0+U_1)$, completing the proof of Claim.
\end{proof}

% an old remark!
%\begin{rem} The above argument works with $H$ being quasiconvex and
%$K$ being absolutely quasiconvex.
%\end{rem}

\section{Combining fully quasiconvex subgroups}

\subsection{The setup.}

Let $H$ be relatively quasiconvex and $K$ fully quasiconvex in a
relatively hyperbolic group $G$. Denote $C = H\cap K$. Let $Y = \Gx$
and $\mathbb X = \{gK: g \in G\}$. We choose $\mathcal L$ to be the
set of all quasigeodesics in $Y$.

Let $\mu, \epsilon$ be the common functions given by Lemma
\ref{projlem} for fully quasiconvex subgroups $K$ and all $P \in
\mathbb P$. Then $\mathbb X$ is a $(\mu, \epsilon)$-contracting
system with respect to $\mathcal L$. ($\mathbb X$ may not have
bounded intersection property.)

Let $\sigma$ be the quasiconvexity function given by Lemma
\ref{quasiconvexity}. Then $H, K$ and each $P \in \mathbb P$ are
$\sigma$-quasiconvex.

\subsection{Normal forms in $ H \star_C K$.}
Let's consider the case $g = k_0 h_1 k_1 ... h_n k_n$, where $h_i
\in H \setminus C, k_i \in K\setminus C$. The other form of $g$ is
completely analogous.

For each $1 \le i \le n$, let $h_i \in H$ be such that $d(1, h_i) =
d(1, C h_i C)$. It is easy to see that such a representation of $g$
always exists.

Let $\gamma = p_0 q_1 p_1 \ldots q_n p_n$ be a concatenation of
geodesic segments $p_i, q_i$ in $\Gx $ such that $(p_0)_- = 1$ and
$\lab (p_i) = k_i, \lab (q_i) = h_i$. We call $\gamma = p_0 q_1 p_1
\ldots q_n p_n$ a \textit{normal path} of $g$.

Define $f_i = k_0 h_1 k_1 \ldots k_{i-1} h_i$ for $1 \le i \le n$
and $f_0 =1$. Then $p_i$ has two endpoints in the left coset $f_i K
\in \mathbb X$.

The following lemma is almost obvious by the definition of a normal
path.

\begin{lem}\label{inject1}
There exists a constant $D=D(H,K) > 0$, $\Lambda=\Lambda(H,K) > 0$
such that the following statement is true.

Suppose that there are $\dot H \subset H$ and $\dot K \subset K$
such that $\dot H \cap \dot K = C$ and $d(1, g) > D$ for any $g \in
(\dot H \cup \dot K) \setminus C$. Then the normal path of any
element in $\dot H \star_C \dot K$ is a $(D, 1, 0)$-admissible path
and thus a ($\Lambda, 0$)-quasigeodesic.
\end{lem}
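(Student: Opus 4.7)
The plan is to verify directly the four clauses of Definition \ref{AdmDef} for the normal path $\gamma = p_0 q_1 p_1 \cdots q_n p_n$ with parameters $(\lambda, c) = (1, 0)$ and contracting system $\mathbb X = \{gK : g \in G\}$, with the $i$-th associated contracting subset taken to be $X_i = f_i K$. Once the four clauses are in place, Corollary \ref{maincor} immediately upgrades $\gamma$ to a $(\Lambda, 0)$-quasigeodesic for $\Lambda = \Lambda(1, 0)$.

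Three of the four clauses are essentially bookkeeping. For clause (1), both endpoints of $p_i$ lie in $f_i K$ by construction, while the endpoints $(q_i)_-$ and $(q_i)_- h_i$ of $q_i$ cannot share a common left $K$-coset because $h_i \in H\setminus C$ is not in $K$; so among each consecutive pair, exactly the $p$-piece has both endpoints in a single $X_i$. Clause (2) is immediate from the hypothesis on $\dot K$: for $0 < i < n$, $\len(p_i) = d(1, k_i) > D$ since $k_i \in \dot K \setminus C$. Clause (4) is handled by the length alternative (the bounded intersection alternative may fail for the system $\mathbb X$, which is why we avoid it): $\len(q_i) = d(1, h_i) > D$ since $h_i \in \dot H \setminus C$.

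The substantive step is clause (3), the $\tau$-orthogonality of $q_i$ to $X_{i-1}$ and $X_i$. The key preparation is to pass to a normal form in which each inner factor $h_i$ realises $d(1, h_i) = d(1, C h_i C)$. Such a form always exists: given a shorter representative $c_1 h_i' c_2$ with $c_1, c_2 \in C$, absorb $c_1$ into the preceding $k$-factor and $c_2$ into the following $k$-factor; since $C$ is a subgroup contained in both $\dot H$ and $\dot K$ and the coset $k_j C$ meets $C$ only if $k_j \in C$, the adjusted $k$-factors remain in $\dot K\setminus C$ and so still satisfy the length bound $>D$. With this choice of $h_i$, left-translate $q_i$ by $(q_i)_-^{-1}$ to get a geodesic from $1$ to $h_i$ and apply Lemma \ref{ksideproj} with $U = \mu_{1,0}$: it produces $\tau = \tau(\mu_{1,0})$ with
\[
\diam{(q_i)_-^{-1} q_i \cap N_{\mu_{1,0}}(K)} < \tau, \qquad \diam{(q_i)_-^{-1} q_i \cap N_{\mu_{1,0}}(h_i K)} < \tau.
\]
Since $f_{i-1} K = (q_i)_- K$ and $f_i K = (q_i)_- h_i K$, translating back by the isometry $(q_i)_-$ yields the required $\tau$-orthogonality of $q_i$ to both $X_{i-1}$ and $X_i$.

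Taking $D$ to be the maximum of the threshold from Corollary \ref{maincor} at $(\lambda, c) = (1, 0)$ and any constant implicit in the preceding estimates, all four clauses of Definition \ref{AdmDef} hold for $\gamma$, and $\gamma$ is a $(\Lambda, 0)$-quasigeodesic by Corollary \ref{maincor}. The only non-routine part is the preprocessing in clause (3) together with the invocation of Lemma \ref{ksideproj}; the latter is precisely the ingredient for which the shortest-in-its-double-coset choice of $h_i$ was designed, so the main obstacle has already been dispatched in the previous section.
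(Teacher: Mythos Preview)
Your proof is correct and follows the same route as the paper's: verify the clauses of Definition \ref{AdmDef} for the normal path, use Lemma \ref{ksideproj} for the orthogonality clause, and conclude via Corollary \ref{maincor}. Your version is in fact more explicit than the paper's, spelling out why the double-coset-minimal choice of $h_i$ is available within $\dot H \setminus C$ and why the adjusted $k$-factors stay in $\dot K \setminus C$; the only minor omission is that the same reasoning (closure of $\dot H$ under multiplication by $C$) also shows the replacement $h_i'$ remains in $\dot H \setminus C$, so $\len(q_i) > D$ survives the preprocessing.
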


\begin{proof}
Let $D =D(H, K), \Lambda=\Lambda(H, K)$ be the constants given by
Main Corollary \ref{maincor}.

Let $g = k_0 h_1 k_1 \ldots h_n k_n$ be an element in $\dot H
\star_C \dot K$, where $k_i \in \dot K \setminus C, h_i \in \dot H
\setminus C$. By assumption, it follows that $d(1, h_i) > D, d(1,
k_i)>D$.

Let $\gamma = p_0 q_1 p_1 \ldots q_n p_n$ be the associated normal
path. Hence $\len(p_i) > D, \len(q_i) > D$. By Lemma
\ref{ksideproj}, $q_i$ is $\nu$-orthogonal to $g_{i-1} K, g_i K$.
Hence $\gamma$ is a $(D, 1, 0)$-admissible path. By Main Corollary
\ref{maincor},  $\gamma$ is a ($\Lambda, 0$)-quasigeodesic.
\end{proof}

\begin{rem}
Note that if $K$ is a maximal parabolic subgroup, then it suffices
to assume that $d(1, g) > D$ for any $g \in \dot K \setminus C$.
This follows from the fact that $\{gP: g \in G, P \in \mathbb P \}$
has bounded intersection property. Hence, the second case of
Condition (3) in the definition of admissible paths is always
satisfied.
\end{rem}

\subsection{Proof of Theorem \ref{freeproduct}}
Lemma \ref{inject1} implies that $\dot H \star_C \dot K \to \langle
\dot H, \dot K \rangle$ is injective.

To show the relative quasiconvexity of $\langle \dot H, \dot K
\rangle$, note that the normal path of each element in $\langle \dot
H, \dot K \rangle$ is a $(\Lambda, 0)$-quasigeodesic. Let $U =
U(\Lambda, 0)$ be the constant given by Lemma \ref{transpoints} and
$L = \nu(U) + 1$. Observe that any $(U, L)$-transition point of
$\gamma$ is a $(U, L)$-transition point of either $p_i$ or $q_i$.
Since $H, K$ are relatively quasiconvex,  we see that any $(U,
L)$-transition point of $\gamma$ has a uniform bounded distance to
$\langle \dot H, \dot K \rangle$. Hence by Lemma \ref{transpoints},
$\langle \dot H, \dot K \rangle$ is relatively quasiconvex.

We now show the last statement of Theorem \ref{freeproduct} about
the conjugacy classes of parabolic subgroups.

\begin{lem}\label{amalgparabolic}
Every parabolic subgroup of $\langle \dot{H}, \dot{K}\rangle$ is
conjugated into either $\dot H$ or $\dot K$.
\end{lem}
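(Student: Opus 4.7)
My approach is to combine the Bass-Serre theory for the splitting $\langle \dot H,\dot K\rangle \cong \dot H \star_C \dot K$ afforded by Theorem~\ref{freeproduct}(1) with the quasi-geodesicity of long admissible paths (Corollary~\ref{maincor}). In the Bass-Serre tree $T$ associated to this splitting, an element fixes a vertex of $T$ if and only if it is conjugate in $\langle\dot H,\dot K\rangle$ into $\dot H$ or $\dot K$, which is in turn equivalent to its cyclically reduced normal form having length at most one. A subgroup of $\langle\dot H,\dot K\rangle$ fixes a vertex of $T$ exactly when every one of its elements does. Hence it is enough to show that no element of a parabolic subgroup of $\langle\dot H,\dot K\rangle$ has cyclically reduced normal form of length $\ge 2$.

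\textbf{Key steps.} Assume for contradiction that $\gamma \in \langle\dot H,\dot K\rangle \cap P^g$ is parabolic with cyclically reduced normal form of length $s\ge 2$. Cyclic reducedness prevents cancellation at the seam, so for every $n$ the normal form of $\gamma^n$ is the $n$-fold concatenation; its normal path $\gamma_n'$ is thus $(D,1,0)$-admissible by Lemma~\ref{inject1}, and hence a $(\Lambda,0)$-quasi-geodesic of length $\Theta(n)$ by Corollary~\ref{maincor}. On the other hand, both endpoints $1$ and $\gamma^n$ lie within distance $d(1,g)$ of the peripheral coset $gP$ (since $\gamma^n g = gp^n$ with $p=g^{-1}\gamma g\in P$), so by the quasiconvexity of $gP$ (Lemma~\ref{quasiconvexity}, using that maximal parabolic subgroups are contracting via Lemma~\ref{projlem}), any geodesic $\alpha_n$ from $1$ to $\gamma^n$ is confined to a uniform neighborhood $N_{U_0}(gP)$ with $U_0$ independent of $n$. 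Proposition~\ref{admissible} then makes $\alpha_n$ an $R$-fellow traveller of $\gamma_n'$; applying the quasiconvexity of the $K$-cosets hosting the pieces $p_i$ to the endpoint pairs supplied by the fellow-traveller, one upgrades this to a uniform containment $\gamma_n' \subset N_M(gP)$ with $M$ independent of $n$.

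\textbf{Deriving the contradiction.} The path $\gamma_n'$ contains $\Theta(n)$ interior geodesic pieces $p_i$ of length $>D$, each contained in $N_{\sigma(0)}(f_iK)$ for some $K$-coset $f_iK$, with consecutive cosets distinct. Full quasiconvexity of $K$ forces a dichotomy: either $K\cap P^g$ is finite, or it is of finite index in $P^g$. In the finite case, Lemma~\ref{intersection} applied to $P$ and the translate $f_i^{-1}gP$ bounds $N_{\sigma(0)}(f_iK)\cap N_M(gP)$ by a constant depending only on $H,K,P,g$; taking $D$ larger than this constant contradicts $\ell(p_i)>D$ with $p_i \subset N_{\sigma(0)}(f_iK)\cap N_M(gP)$. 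In the finite-index case, Lemma~\ref{fullparabolic} places $gP$ inside $N_{U(K)}(K)$, so $\gamma_n'$ lies in a neighborhood of the single coset $K$; Lemma~\ref{intersection} applied to $K$ against itself then bounds each interior $p_i$ with $f_iK\neq K$, again contradicting $\ell(p_i)>D$.

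\textbf{Main obstacle.} The delicate point is securing uniformity of the intersection bounds as the prefix coset $f_iK$ varies with $n$: the constant of Lemma~\ref{intersection} depends a priori on the coset translate. Cyclic reducedness rescues the argument, since the cosets $\{f_iK\}$ fall into only finitely many $\gamma$-orbits (one per letter in the cyclic normal form of $\gamma$), and the fully quasiconvex hypothesis on $K$ then yields a single overall bound depending only on $H,K,P,g$. This bound can be absorbed into the constant $D=D(H,K)$ produced at the outset by Theorem~\ref{freeproduct}, closing the contradiction.
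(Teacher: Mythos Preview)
Your overall architecture---reduce to showing no cyclically reduced word of syllable length $\ge 2$ is parabolic, take powers, use quasiconvexity of the parabolic coset $gP$ and the fellow-traveller property to trap the normal path in a uniform neighborhood $N_M(gP)$---is correct and matches the paper's approach. The gap is in how you extract the contradiction.

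First, your dichotomy is misplaced: whether $K\cap P^{(\cdot)}$ is finite or of finite index must be decided for the conjugate $P^{f_i^{-1}g}$ governing the particular coset $f_iK$, not once and for all for $P^g$. More seriously, in your finite-index branch you write ``Lemma~\ref{intersection} applied to $K$ against itself then bounds each interior $p_i$ with $f_iK\neq K$.'' This does not follow. Lemma~\ref{intersection} only gives $N_U(f_iK)\cap N_U(f_{i'}K)\subset N_\kappa\bigl(K^{f_i}\cap K^{f_{i'}}\bigr)$, and nothing forces $K^{f_i}\cap K^{f_{i'}}$ to be finite; the paper explicitly notes in \S4.1 that the system $\{gK:g\in G\}$ need \emph{not} have bounded intersection. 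So two distinct $K$-cosets can have arbitrarily large coarse overlap, and your bound on $\len(p_i)$ evaporates. A related uniformity problem afflicts your finite branch: the constant $\kappa(K,f_j^{-1}gP,U)$ from Lemma~\ref{intersection} depends on $f_j$, $g$, $P$, hence on the particular parabolic element, and cannot be absorbed into $D(H,K)$. (That branch is salvageable via Lemma~\ref{longparabolic}, whose constant $L(K,M)$ depends only on $K$ and $M$; your invocation of Lemma~\ref{intersection} there is the wrong tool.)

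The paper closes the argument differently: once some $f_iK$ engulfs $gP$ (which is exactly what your finite-index branch yields, via Lemma~\ref{fullparabolic}), it bounds the adjacent \emph{$H$-piece} $q_i$ rather than a $K$-piece, using the orthogonality Lemma~\ref{ksideproj}. That lemma---which relies on the normalization $d(1,h_i)=d(1,Ch_iC)$ built into the normal form---gives $\diam\bigl(q_i\cap N_U(f_iK)\bigr)<\tau(U)$ with $\tau$ depending only on $H,K$. Since $q_i\subset N_M(gP)\subset N_{M+U(K)}(f_iK)$, this forces $\len(q_i)<\tau(M+U(K))$, contradicting $\len(q_i)>D$ once $D$ exceeds this purely $(H,K)$-dependent constant. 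You never invoke Lemma~\ref{ksideproj}, and that is the missing ingredient.
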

\begin{proof}
Note that maximal parabolic subgroups in $\langle \dot{H},
\dot{K}\rangle$ are of form $P^f \cap \langle \dot{H},
\dot{K}\rangle$, where $f \in G$ and $P \in \mathbb P$. Let $g \in
\langle \dot{H}, \dot{K}\rangle \setminus (\dot H \cup \dot K)$. The
idea is to take sufficiently large $D$ in Theorem \ref{freeproduct},
to show that $g \notin P^f$ for any $f \in G, P\in \mathbb P$.

Suppose, to the contrary,  that $g = fpf^{-1}$ for some $f \in G, p
\in P$. Without loss of generality, we assume that $g = h_0 k_1 h_1
... k_n h_n$, where $h_i \in \dot H, k_i \in \dot K$. Denote the
normal path of $g$ by $\gamma = p_0 q_1 p_1 ... q_n p_n$.

Let $\alpha$ be a geodesic segment with the same endpoints as
$\gamma$. By Proposition \ref{admissible}, the endpoints of each
$p_i, q_i$ lie in a uniform $R$-neighborhood of $\alpha$, where $R =
R(1,0)$.

Let $z, w$ be the first and last points of $\alpha$ respectively
such that $z, w \in N_{\mu(1, 0)} (fP)$. Then by projection, we have
$d(\alpha_-, z), d(\alpha_+, w) \le d(1, f) + \mu_{1, 0} +
\epsilon_{1, 0}$. Let $\alpha' = [z, w]_\alpha$. Then $\alpha'
\subset N_{\sigma(\mu(1, 0))}(fP)$.

Note that the analysis in the last paragraph applies to any power of
$g$. By taking sufficiently large power of $g$, the length of
$\alpha'$ can be arbitrarily large.

Set $U =\sigma(R+\sigma(\mu_{1, 0}))$. Hence  we can assume further
that there exist consecutive $p_i, q_i$ for some $i$ of $\gamma$
such that $q_i, p_i \subset N_{U}(fP)$. Let $f_i \in G$ be the
element associated to the vertex $(q_i)_+ = (p_i)_-$. Then $q_i,
p_i$ have endpoints in $f_i H, f_i K$ respectively.

Since $\diam{N_U(fP) \cap f_i H} > D$ and we assumed that $D > L(H,
U)$, it follows by Lemma \ref{longparabolic} that $H \cap P^{f'}$ is
infinite, where $f' = f_i^{-1} f$. Similarly, as it is assumed that
$D> L(K, U)$, we have $K \cap P^{f'}$ is infinite. By Lemma
\ref{fullparabolic}, we see that $f' P \subset N_U(K)$.

We now translate the terminal point of $q_i$ to $1$. Note that
$f_i^{-1} q_i \subset N_U(f' P)$. Since $k_i \in K$ is such that
$d(1, k_i) = d(1, C k_i C)$. By Lemma \ref{ksideproj}, we have
$\diam{f_i^{-1} q_i \cap N_U(K)} < \tau(U)$. This implies that
$\len(q_i) < \tau(U)$. It suffices to assume further $D > \tau(U)$
to get a contradiction. Hence, it is shown that any $g \notin P^f$
for $f \in G, P \in \mathbb P$.
\end{proof}

In fact, the proof also shows the following.

\begin{cor}
Any element $g$ in $\langle \dot{H}, \dot{K}\rangle \setminus (\dot
H \cup \dot K)$ is hyperbolic, i.e.: $g$ is not conjugated into any
$P \in \mathbb P$.
\end{cor}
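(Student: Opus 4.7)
The plan is to observe that the argument just given for Lemma \ref{amalgparabolic} already establishes this stronger element-wise statement, so the corollary follows by a re-reading of that proof. Concretely, the proof of Lemma \ref{amalgparabolic} begins by fixing an arbitrary $g \in \langle \dot H, \dot K\rangle \setminus (\dot H \cup \dot K)$ and then derives a contradiction from the assumption that $g = f p f^{-1}$ for some $f \in G$ and $p \in P \in \mathbb P$ --- which is exactly the statement to be proved here.

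For completeness, here is how I would lay out the argument one more time. Fix $g$ as above, write it in the alternating normal form $g = h_0 k_1 h_1 \ldots k_n h_n$ of Section 4.2 with the $h_i$ chosen so that $d(1, h_i) = d(1, C h_i C)$, and let $\gamma$ be the associated normal path, which is $(D, 1, 0)$-admissible by Lemma \ref{inject1}. I would then pass to powers $g^m = f p^m f^{-1}$ and observe that the $m$-fold concatenation $\gamma_m$ remains $(D, 1, 0)$-admissible and hence a $(\Lambda, 0)$-quasigeodesic; the admissibility across the $m-1$ new seams is routine since the building blocks are re-used unchanged. A geodesic $\alpha_m$ from $1$ to $g^m$ meanwhile has both endpoints within distance $d(1, f)$ of $fP$, so by projection it contains a subsegment lying in $N_{\sigma(\mu_{1,0})}(fP)$ whose length grows linearly in $m$, while Proposition \ref{admissible} forces $\gamma_m$ to $R$-fellow-travel $\alpha_m$.

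For $m$ sufficiently large, this traps some consecutive pair $q_i, p_i$ of pieces of $\gamma_m$ inside $N_U(fP)$ for $U = \sigma(R + \sigma(\mu_{1,0}))$, and from here the rest of the contradiction is mechanical: Lemma \ref{longparabolic} applied to both $H$ and $K$ forces $H \cap P^{f'}$ and $K \cap P^{f'}$ to be infinite for $f' = f_i^{-1} f$, Lemma \ref{fullparabolic} upgrades this to $f'P \subset N_U(K)$, and Lemma \ref{ksideproj} bounds $\len(q_i) < \tau(U)$, contradicting $\len(q_i) > D$ once $D > \tau(U)$. There is no genuinely new obstacle; the main bookkeeping point is to confirm that the enlarged $D$ already fixed in Theorem \ref{freeproduct} satisfies every inequality needed here, which it does, since the required chain of estimates coincides with the one used in Lemma \ref{amalgparabolic}.
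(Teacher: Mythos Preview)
Your proposal is correct and matches the paper's own treatment: the paper simply states ``In fact, the proof also shows the following'' before the corollary, and your re-reading of the proof of Lemma \ref{amalgparabolic} faithfully reproduces that argument (passing to powers, trapping a consecutive $q_i, p_i$ in $N_U(fP)$, and deriving the contradiction via Lemmas \ref{longparabolic}, \ref{fullparabolic}, and \ref{ksideproj}). Your explicit remark that the $m$-fold concatenation $\gamma_m$ remains $(D,1,0)$-admissible is a point the paper leaves implicit, but otherwise the two accounts coincide.
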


We also note the following corollary.
\begin{cor}
The virtual amalgamation of two fully quasiconvex subgroups is fully
quasiconvex.
\end{cor}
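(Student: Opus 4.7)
My plan is to deduce the corollary by combining Theorem~\ref{freeproduct} with Lemma~\ref{amalgparabolic} and the full quasiconvexity of both factors. The phrase ``virtual amalgamation'' I interpret as: take finite-index subgroups $\dot H \leq_{fi} H$ and $\dot K \leq_{fi} K$ satisfying the hypotheses of Theorem~\ref{freeproduct}, namely $\dot H \cap \dot K = C = H \cap K$ and the length condition $d(1,g) > D$ for $g \in (\dot H \cup \dot K)\setminus C$. Set $\Gamma := \langle \dot H, \dot K\rangle$, which is then isomorphic to $\dot H \star_C \dot K$ and relatively quasiconvex in $G$.

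The verification of full quasiconvexity proceeds as follows. Fix $g \in G$ and $P \in \mathbb P$ and consider $P^g \cap \Gamma$. If this intersection is finite, there is nothing to prove. Otherwise it is an infinite parabolic subgroup of $\Gamma$, so by Lemma~\ref{amalgparabolic} it is conjugated, by some $u \in \Gamma$, into $\dot H$ or $\dot K$; say $u(P^g \cap \Gamma) u^{-1} \subset \dot H$. Rewriting the left-hand side as $P^{ug} \cap \Gamma$ and using $\dot H \subset \Gamma$, this containment forces the equality $P^{ug} \cap \Gamma = P^{ug} \cap \dot H$.

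Now invoke the full quasiconvexity of $H$: since $P^{ug} \cap H$ contains the infinite subgroup $P^{ug} \cap \dot H$, it must be of finite index in $P^{ug}$. Because $\dot H$ is a finite-index subgroup of $H$, a standard index calculation gives $[P^{ug} \cap H : P^{ug} \cap \dot H] < \infty$, whence $P^{ug} \cap \dot H$ is of finite index in $P^{ug}$. Conjugating back by $u^{-1}$ (which preserves both $\Gamma$ and the index), we obtain that $P^g \cap \Gamma$ is of finite index in $P^g$. This establishes the finite-or-finite-index dichotomy for every $g \in G, P \in \mathbb P$.

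The only delicate point is the interpretation of ``conjugated into'' in Lemma~\ref{amalgparabolic}: for the argument above I need a conjugating element that actually lies in $\Gamma$, not merely in $G$. I would confirm this by reading the proof of Lemma~\ref{amalgparabolic} together with standard Bass--Serre theory for $\dot H \star_C \dot K$, where every parabolic (indeed every elliptic) subgroup of the amalgam is conjugate \emph{inside the amalgam} to a subgroup of a vertex group. Modulo this unpacking, the proof is essentially the three steps above, and the use of finite index of $\dot H$ in $H$ (and $\dot K$ in $K$) is the reason the conclusion is phrased for the \emph{virtual} amalgamation rather than an arbitrary one.
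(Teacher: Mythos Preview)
Your proof is correct and matches what the paper intends; the corollary is stated without proof immediately after Lemma~\ref{amalgparabolic} as an evident consequence, and your three-step argument (conjugate the parabolic into a factor, use full quasiconvexity of $H$ or $K$, pass to the finite-index subgroup $\dot H$ or $\dot K$) is exactly the natural deduction. On the point you flag about the conjugating element: the proof of Lemma~\ref{amalgparabolic} actually establishes that every element of $\Gamma\setminus(\dot H\cup\dot K)$ is hyperbolic, so any parabolic subgroup $P^g\cap\Gamma$ is already contained in $\dot H\cup\dot K$ and hence (being a group with $\dot H\cap\dot K=C$) in one of the two factors---you may take $u=1$.
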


\section{HNN combination theorem}

As in the previous section, a finitely generated group $G$ is
assumed to be hyperbolic relative to a collection of subgroups
$\mathbb P$. In addition, we will also consider the geometry of
\textit{relative Cayley graph} of $G$ with respect to $\mathbb P$,
denoted by $\G$.

Note that $\Gx$ is a subgraph of $\G$, and a path(resp. geodesic) in
$\G$ is also referred to as a \textit{relative} path(resp.
\textit{relative} geodesic).

\subsection{The setup}
Let $Y = \Gx$ and $\mathbb X = \{gP: g \in G, P \in \mathbb P\}$.
Let $\mu, \epsilon$ be the common functions given by Lemma
\ref{projlem} for all $P \in \mathbb P$. We choose $\mathcal L$ to
be the set of all quasigeodesics in $Y$. Then $\mathbb X$ is a
$(\mu, \epsilon)$-contracting system with respect to $\mathcal L$.

Let $\sigma$ be the quasiconvexity function given by Lemma
\ref{quasiconvexity}. Then each $P \in \mathbb P$ are
$\sigma$-quasiconvex.

\subsection{Lift paths}
The notion of a lift path is interacting between the geometry of
relative and normal Cayley graphs. Before giving the definition, we
need recall several notions introduced by Osin \cite{Osin} in
relative Cayley graphs.
\begin{defn}[$P_i$-components]
Let $\gamma$ be a path in $\G$. Given $P_i \in \mathbb P$, a subpath
$p$ of $\gamma$ is called \textit{$P_i$-component} if $\lab(p) \in
P_i$ and no subpath $q$ of $\gamma$ exists such that $p \subsetneq
q$ and $\lab(q) \in P_i$.

Two $P_i$-components $p_1, p_2$ of $\gamma$ are \textit{connected}
if $(p_1)_-, (p_2)_-$ belong to a same $X \in \mathbb X$. A
$P_i$-component $p$ is \textit{isolated} if no other $P_i$-component
of $\gamma$ is connected to $p$.
\end{defn}

\begin{defn}[Lift path]
Let $\gamma$ be a path in $\G$. The \textit{lift path} $\hat \gamma$
is obtained by replacing each $P_i$-component of $\gamma$ by a
geodesic with the same endpoints in $\Gx$.
\end{defn}

We recall a fact implicitly in \cite[Thm. 1.12(4)]{DruSapir} that
the lift path of a relative geodesic is a quasigeodesic. A rather
general version can be found in \cite[Proposition 7.2.2]{GePo4}.

\begin{lem} \label{liftpath}
There exists a constant $\lambda \ge 1$ such that the lift of any
relative geodesic is a $(\lambda, 0)$-quasigeodesic.
\end{lem}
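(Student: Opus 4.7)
\emph{Plan.} The plan is to realize the lift $\hat\gamma$ as a long admissible path in the sense of Definition \ref{AdmDef} and to conclude via Corollary \ref{maincor}. Decompose the relative geodesic as $\gamma = e_0 c_1 e_1 \cdots c_n e_n$, where each $c_i$ is a single $P_{k_i}$-edge with endpoints in the parabolic coset $X_i := g_i P_{k_i} \in \mathbb X$, and each $e_j$ is a (possibly trivial) maximal subpath of $S$-edges. Then $\hat\gamma = e_0 \hat c_1 e_1 \cdots \hat c_n e_n$, where each $\hat c_i$ is a $\Gx$-geodesic between the endpoints of $c_i$.

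I would first record two standard consequences of $\gamma$ being a relative geodesic: (i) no two $P$-components $c_i, c_j$ are connected, so the cosets $X_1,\ldots,X_n$ are pairwise distinct---otherwise a single $P$-edge inside a common coset would strictly shorten $\gamma$; and (ii) each $e_j$ is itself an absolute geodesic in $\Gx$---otherwise a shorter $\Gx$-geodesic with the same endpoints would yield a shorter relative path. Moreover, the restriction of $\gamma$ between any two of its vertices is again a relative geodesic, so for such vertices $x, y$ one has $\ell_\G([x,y]_\gamma) = d_\G(x,y)$.

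Next, fix $\Lambda_0 \ge 1$ and set $D_0 = D(\Lambda_0, 0)$ from Corollary \ref{maincor}. Call a lift $\hat c_i$ \emph{long} when $\ell(\hat c_i) > \Lambda_0 D_0$, and \emph{short} otherwise. After absorbing short lifts into their adjacent $e$-pieces, I would write $\hat\gamma = p_0 q_1 p_1 q_2 \cdots q_m p_m$ (with the first and last $p$ possibly degenerate), where each interior $p_j$ is a long $\hat c$ (a $\Gx$-geodesic with both endpoints in some $X'_j \in \mathbb X$) and each $q_j$ is the intervening concatenation of $e$-pieces and short lifts. Since inside $q_j$ every short $\hat c$ contributes at most $\Lambda_0 D_0$ to $\ell_{\Gx}$ but at least $1$ to $\ell_\G$, and each $e$-piece contributes equally to both,
\[
\ell_{\Gx}(q_j) \le \Lambda_0 D_0 \cdot \ell_\G([q_j]_\gamma) = \Lambda_0 D_0 \cdot d_\G((q_j)_-, (q_j)_+) \le \Lambda_0 D_0 \cdot d_{\Gx}((q_j)_-, (q_j)_+),
\]
so each $q_j$ is a $(\Lambda_0 D_0, 0)$-quasigeodesic, hence lies in $\mathcal L$.

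The admissibility conditions of Definition \ref{AdmDef} are then checked as follows: conditions (1) (alternation) and (2) (interior $p$'s are long) hold by construction, and (4) is free from the $\nu$-bounded intersection of distinct parabolic cosets (Lemma \ref{boundinter}). The main obstacle is orthogonality (3): each $q_j$ must be $\tau$-orthogonal to both neighbouring contracting subsets $X'_{j-1}$ and $X'_j$, uniformly. I would establish this directly: if two vertices $x, y \in q_j$ both lie in $N_\mu(X'_j)$, then there is a $\G$-path from $x$ to $y$ that enters $X'_j$, crosses it by a single $P$-edge, and exits, of $\G$-length at most $2\mu + 1$; since $[x,y]_\gamma$ is a sub-relative-geodesic, $\ell_\G([x,y]_\gamma) \le 2\mu + 1$, and the same short-lift accounting then bounds $d_{\Gx}(x, y) \le (2\mu + 1)\Lambda_0 D_0$, yielding a uniform $\tau$ (the $X'_{j-1}$-side is symmetric). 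Calibrating $\Lambda_0$ so that its derived quasigeodesicity constant is absorbed into $(\Lambda_0, 0)$, the path $\hat\gamma$ becomes a $(D_0, \Lambda_0, 0)$-admissible path, and Corollary \ref{maincor} produces the required uniform constant $\lambda$.
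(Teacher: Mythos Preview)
The paper does not prove this lemma at all: it is quoted as a known fact, attributed to \cite[Thm.~1.12(4)]{DruSapir} and, more generally, to \cite[Proposition~7.2.2]{GePo4}. In the paper's logical structure, Lemma~\ref{liftpath} is an \emph{input} to the admissible-path machinery (it is used in Lemma~\ref{reladmissible} to certify that the $\hat q_i$ pieces are uniform quasigeodesics), not an output of it.

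Your attempt to derive the lemma from Corollary~\ref{maincor} contains a genuine circularity in the choice of constants. You fix $\Lambda_0$ and set $D_0=D(\Lambda_0,0)$, then declare a lifted component ``short'' when its length is at most $\Lambda_0 D_0$. This makes each intermediate piece $q_j$ only a $(\Lambda_0 D_0,0)$-quasigeodesic, not a $(\Lambda_0,0)$-quasigeodesic. To invoke Corollary~\ref{maincor} you would then need the long pieces $p_j$ to satisfy $\ell(p_j)>(\Lambda_0 D_0)\cdot D(\Lambda_0 D_0,0)$, whereas by construction they are only guaranteed $\ell(p_j)>\Lambda_0 D_0$; this forces $D(\Lambda_0 D_0,0)<1$, which fails in general since the function $D(\lambda,c)$ grows with $\lambda$. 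The vague ``calibrating $\Lambda_0$'' step cannot close this gap: whatever threshold $T$ you pick for short/long, the $q_j$ become $(T,0)$-quasigeodesics and the admissibility condition on the $p_j$ demands $\ell(p_j)>T\cdot D(T,0)$, while ``long'' only gives $\ell(p_j)>T$.

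There is also a secondary issue in your orthogonality step: the points $x,y\in q_j\cap N_\mu(X'_j)$ need not be vertices of the original relative geodesic $\gamma$ (they may lie in the interior of a short lift $\hat c_k$), so the inequality $\ell_\G([x,y]_\gamma)\le 2\mu+1$ is not directly available; passing to nearby vertices of $\gamma$ costs an extra additive $\Lambda_0 D_0$, which feeds back into the same circularity. The upshot is that Lemma~\ref{liftpath} genuinely requires an external ingredient (bounded coset penetration, or the Dru\c{t}u--Sapir/Gerasimov--Potyagailo arguments) and is not a corollary of the admissible-path framework developed here.
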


The following result says that a relative geodesic leaves parabolic
cosets in an orthogonal way, as is defined in Section 2.

\begin{lem}[Orthogonality of relative geodesics]\label{relorthogonal}
For any constant $U > 0$, there exists a constant $\tau=\tau(U) >0$
such that the following holds.

Given $gP \in \mathbb X$, let $p$ be a relative geodesic such that
$p \cap gP = \{p_+\}$. Denote by $\hat p$ the lift of $p$. Then
$\diam{\hat p \cap N_U(gP)} < \tau$.
\end{lem}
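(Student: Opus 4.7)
The plan is to bound $\diam(\hat p \cap N_U(gP))$ by combining the bounded intersection property of the contracting system $\mathbb X$, the structural implication of the hypothesis $p \cap gP = \{p_+\}$, and the shortcut principle for vertices close to $gP$ in the relative Cayley graph.

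First I would decompose the lift $\hat p$ according to the edge structure of $p$ in $\G$: each $S$-edge stays as itself (length $1$), while each $P_i$-component is replaced by a $\Gx$-geodesic $\hat e_k$ contained in $N_{\sigma(0)}(f_k P_i)$ by $\sigma$-quasiconvexity of the coset $f_k P_i$. A structural exclusion is then immediate: no $P_i$-component of $p$ can have both endpoints in $gP$, since otherwise two distinct vertices of $p$ would lie in $gP$, contradicting $p \cap gP = \{p_+\}$. Consequently $f_k P_i \neq gP$ for every lifted component, and Lemma \ref{boundinter} yields
\[
\diam\bigl(\hat e_k \cap N_U(gP)\bigr) \leq \nu(\max(U,\sigma(0))) =: \nu'.
\]

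Next I would show that every point $x \in \hat p \cap N_U(gP)$ is $\Gx$-close to the terminal vertex $p_+$. By Step~1, $x$ is within $\max(1,\nu')$ of some vertex $v_j$ of $p$ lying close to $gP$. For such a vertex $v_j$ with $j<m$, the relative-geodesic property of $p$ combined with the hypothesis $p\cap gP=\{p_+\}$ forces its $\Gx$-projection $z_j\in gP$ to coincide with $p_+$: any $z_j\neq p_+$ would produce a $\G$-shortcut from $v_j$ to $p_+$ of length at most $U+1$ passing through $z_j\in gP\setminus\{p_+\}$, which would either strictly shorten $p$ (contradicting relative geodesicity) or force $p$ to pass through $z_j$ (contradicting the hypothesis). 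This yields $d_{\Gx}(v_j,p_+)\leq U+O(1)$, and applying the triangle inequality through $p_+$ to any two points $z,w\in \hat p \cap N_U(gP)$ produces the uniform bound $\tau(U)$.

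The main obstacle is the second step: the $\G$-shortcut principle directly bounds only the relative distance $d_{\G}(v_j,p_+)$, not the $\Gx$-distance. The subtlety is that $d_{\G}(v_j,p_+)$ may be realized by several competing $\G$-paths, not all of which traverse $gP\setminus\{p_+\}$. Handling this requires a careful case analysis of minimal $\G$-paths to ensure the hypothesis really forces the projection coincidence $z_j=p_+$; this is precisely where the interaction between relative geodesicity, bounded intersection, and the parabolic-coset structure of $\G$ is delicately exploited.
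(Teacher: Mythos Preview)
Your Step 1 is sound: each lifted component $\hat e_k$ lies in $N_{\sigma(0)}(f_kP_i)$ with $f_kP_i\neq gP$, so bounded intersection caps $\diam(\hat e_k\cap N_U(gP))$. This reduces the problem to bounding $d_{\Gx}(v_j,p_+)$ for vertices $v_j$ of $p$ that fall in $N_U(gP)$.

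The gap is Step 2. Your shortcut $v_j\to z_j\to p_+$ has $\G$-length at most $U+1$, so relative geodesicity of $p$ gives only $d_{\G}(v_j,p_+)\le U+1$; equivalently, the tail $[v_j,p_+]_p$ has at most $U+1$ relative edges. This says nothing about $d_{\Gx}(v_j,p_+)$, because those $U+1$ edges may include $P_i$-components of arbitrarily large $\Gx$-length. Your proposed remedy, the ``projection coincidence'' $z_j=p_+$, is not forced by the hypotheses: the existence of the alternative path $[p_-,v_j]_p\cdot[v_j,z_j]\cdot e$ (with $e$ the $P$-edge from $z_j$ to $p_+$) contradicts neither geodesicity of $p$ nor the hypothesis $p\cap gP=\{p_+\}$, since the hypothesis concerns the specific path $p$, not all geodesics with the same endpoints. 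So you never obtain a $\Gx$-bound, and the case analysis you allude to cannot close the gap without a new ingredient.

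The paper supplies exactly that missing ingredient. Rather than trying to force a coincidence, it takes the first point $z\in\hat p\cap N_U(gP)$ (lying on some lifted component $\hat s$), picks $w\in gP$ with $d_{\Gx}(z,w)\le U$, and forms the relative cycle
\[
o=[z,w]\cdot e_1\cdot[p_+,s_+]_p\cdot e_2,
\]
where $e_1,e_2$ are single $P$- and $P_i$-edges. Since $p$ is a relative geodesic, every $P_j$-component of $o$ is isolated; projecting the remaining $\le 2(U+1)$ edges of $o$ onto each component's coset bounds the $\Gx$-length of that component by $B\cdot\len(o)$. Summing over the components of $[z,p_+]_{\hat p}$ gives $d_{\Gx}(z,p_+)\le 4B(U+1)^2+2(U+1)$. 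The essential mechanism you are missing is this isolated-component-plus-bounded-projection estimate for controlling $\Gx$-lengths of $P_i$-components inside a short relative cycle.
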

\begin{proof}
By Lemma \ref{equivalence}, $\mathbb X = \{gP: g \in G, P \in
\mathbb P\}$ has $B$-bounded projection for some $B >0$. Moreover,
the projection of an edge in $\Gx $ to any $X \in \mathbb X$ is also
uniformly bounded by a constant, say $B$ for convenience.

Given $U > 0$, set $\nu(U) = 4B(U+1)^2 + 2(U+1)$. Let $z$ be the
first point of $\hat p$ such that $d(z, gP) \le U$. We shall show
that $d(z, p_+) < \nu(U)$.

Without lost of generality, assume that $z$ is a vertex in a
geodesic segment $\hat s$, which is the lift of a $P_i$-component
$s$ of $p$. Clearly $\dxp(z, s_+) \le 1$. Let $w \in gP$ such that
$\dx(z, w) \le U$.

Let $q$ be a geodesic in $\Gx$ such that $q_- = z, q_+ = w$. Since
$\dxp(w, p_+) \le 1$, let $e_1$ be an edge such that $\lab(e_1) \in
P$ and $(e_1)_-=w, (e_1)_+ =p_+$. Similarly, let $e_2$ be an edge
such that $\lab(e_2) \in P_i$ and $(e_2)_-=s_+, (e_2)_+ =z$.
Consider the cycle $o = q e_1 [p_+, s_+]_p e_2$. Note that
$$\len(o) \le \len(q) + 1 + \dxp(s_+, p_+) + 1 \le 2(U+1).$$

Since $p$ is a relative geodesic, each $P_i$-component of $o$ is
isolated. Then given a $P_i$-component $t$ of $o$, we project other
edges of $o$ to the parabolic coset associated to $t$. This gives
the estimate $d(t_-, t_+) \le B \len (o)$ for each $P_i$-component
$t$ of $o$. Hence, it follows that $d(z, p_+) \le \len([z,
p_+]_{\hat p}) \le \len(o) + B (\len(o))^2 < 4B(U+1)^2 + 2(U+1)$.
\end{proof}

We now consider a class of admissible paths coming from the lifts of
piecewise relative geodesics.  Such type of admissible paths will be
obtained by truncating the normal path defined in the next
subsection.
\begin{lem}\label{reladmissible}
There are constants $D >0, \Lambda \ge 1$ such that the following
holds.

Let $\gamma = p_0 q_1 p_1 \ldots q_n p_n$ be a concatenation path in
$\G$, where $p_i$ are $P_i$-components of $\gamma$ for some $P_i \in
\mathbb P$ and $q_i$ are relative geodesics. Assume that $d((p_i)_-,
(p_i)_+)
> D$ for $0 < i < n$, and $p_{i-1}, p_i$ are not connected. Then the
lift of $\gamma$ is a $(\Lambda, 0)$-quasigeodesic.
\end{lem}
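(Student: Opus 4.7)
The plan is to show that the lift $\hat\gamma$ fits the framework of Definition \ref{AdmDef}: it is a $(D_0, \lambda, 0)$-admissible path for the contracting system $\mathbb X = \{gP : g \in G, P \in \mathbb P\}$, where $\lambda \ge 1$ is the constant from Lemma \ref{liftpath}. I would then invoke Corollary \ref{maincor} applied to $(\lambda, 0)$ to extract the constants $D_0 = D(\lambda, 0)$ and $\Lambda = \Lambda(\lambda, 0)$, and take the lemma's threshold to be $D = \lambda D_0 + 1$, which guarantees $\len(\hat p_i) = d((p_i)_-, (p_i)_+) > \lambda D_0$ for $0 < i < n$, i.e.\ the interior length condition in the definition of an admissible path.

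Structurally, each $P_i$-component $p_i$ of $\gamma$ lifts to a $\Gx$-geodesic $\hat p_i$ whose endpoints lie in $X_i = (p_i)_- P_i \in \mathbb X$, while each relative geodesic $q_i$ lifts to a $(\lambda, 0)$-quasigeodesic $\hat q_i$ by Lemma \ref{liftpath}. This yields the alternating pattern required by condition (1) of Definition \ref{AdmDef}. For condition (4), the hypothesis that $p_{i-1}$ and $p_i$ are not connected implies $X_{i-1} \ne X_i$, so Lemma \ref{boundinter} supplies $\nu$-bounded intersection between $X_{i-1}$ and $X_i$, and no length bound on $\hat q_i$ needs to be imposed.

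The delicate point is the orthogonality condition (3), which requires $\hat q_i$ to be $\tau$-orthogonal to both $X_{i-1}$ and $X_i$. The key observation is that $q_i \cap X_i = \{(q_i)_+\}$: if some other vertex $v$ of $q_i$ lay in $X_i$, then the subpath $[v, (q_i)_+]_{q_i}$ would carry a label in $P_i$ and, being a subpath of a relative geodesic, would have to be a single $P_i$-labeled edge; but concatenating this edge with $p_i$ would produce a strictly longer $P_i$-labeled subpath of $\gamma$ containing $p_i$, contradicting the maximality of $p_i$ as a $P_i$-component. The symmetric argument at the initial end, using maximality of $p_{i-1}$, gives $q_i \cap X_{i-1} = \{(q_i)_-\}$. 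Lemma \ref{relorthogonal} (applied with $U = \mu(\lambda, 0)$ to both $q_i$ and its reverse) then supplies a uniform $\tau$ such that $\diam{\hat q_i \cap N_U(X_i)} < \tau$ and $\diam{\hat q_i \cap N_U(X_{i-1})} < \tau$, establishing condition (3).

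With all four conditions of Definition \ref{AdmDef} verified, Corollary \ref{maincor} immediately yields that $\hat\gamma$ is a $(\Lambda, 0)$-quasigeodesic. The main obstacle is the orthogonality step: it is the only place where the two structural hypotheses (non-connectedness of consecutive $P$-components and the fact that the $q_i$ are relative geodesics) actually do substantive work, and getting it right requires the subtle interplay between labels, maximality of $P_i$-components, and Lemma \ref{relorthogonal}. Everything else is a matter of choosing constants consistently.
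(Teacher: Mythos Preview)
Your proposal is correct and follows essentially the same route as the paper: verify that the lift $\hat\gamma$ is a $(D,\lambda,0)$-admissible path for the contracting system $\mathbb X=\{gP:g\in G,\,P\in\mathbb P\}$ (using Lemma \ref{liftpath} for the $\hat q_i$, Lemma \ref{relorthogonal} for orthogonality, and Lemma \ref{boundinter} together with non-connectedness for condition (4)), then invoke Corollary \ref{maincor}. Your treatment of the orthogonality step---arguing via maximality of $p_i$ as a $P_i$-component that $q_i\cap X_i=\{(q_i)_+\}$ before applying Lemma \ref{relorthogonal}---is in fact more careful than the paper's own proof, which simply asserts that Lemma \ref{relorthogonal} applies.
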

\begin{proof}
Let $\hat \gamma = \hat p_0 \hat q_1 \hat p_1 \ldots \hat q_n \hat
p_n$ be the lift path. Each $\hat q_i$ is a
$(\lambda,0)$-quasigeodesic for some $\lambda \ge 1$ by Lemma
\ref{liftpath}. Let $g_i P_i \in \mathbb X$ be the parabolic coset
in which the endpoints of $p_i$ lie. Note that Lemma
\ref{relorthogonal} verifies that $\hat q_i$ is orthogonal to
$g_{i-1} P_{i-1}, g_i P_i$. Hence, we see that $\hat \gamma$ is a
$(D, \lambda, 0)$-admissible path. Since $\mathbb X$ is a
contracting system. As a consequence, the constants $D, \Lambda$ are
provided by Main Corollary \ref{maincor}.
\end{proof}

\subsection{Normal forms in $H \star_{Q^t = Q'}$}
Let $P \in \mathbb P, f \in G$ be such that $Q = P \cap H$ and $Q^f
= Q'$ are non-conjugate maximal parabolic subgroups of $H$. Denote
$P' = P^f$.

Assume that there is $c \in P$ such that $Q^c = Q$. Set $t = fc$.

Let $g \in H \star_{Q^t = Q'}$ be written as the form $h_1
t^{\epsilon_1} h_2 t^{\epsilon_2} \ldots h_n t^{\epsilon_n}$, where
$h_i \in H, \epsilon_i \in \{1, -1\}$. By Britton's Lemma, if
$\epsilon_i = 1, \epsilon_{i+1}=-1$, then $t \notin Q$; if
$\epsilon_i = -1, \epsilon_{i+1}=1$, then $t \notin Q'$.

A \textit{normal path} of $g$ is a concatenated path $\gamma = q_1
(\beta_1 p_1)^{\epsilon_1} q_2 (\beta_2 p_2)^{\epsilon_2} \ldots q_n
(\beta_n p_n)^{\epsilon_n}$ in $\G $ with the following properties

\begin{enumerate}
\item
$q_i$ is a relative geodesic in $\G$ such that $\lab(q_i) = h_i$,
\item
$\beta_i$ is a geodesic in $\Gx$ such that $\lab(\beta_i) = f$, and
\item
$p_i$ is an edge in $\G$ such that $\lab(p_i) = c$.
\end{enumerate}

Let $g_i P \in \mathbb X$ be the parabolic coset such that $(p_i)_-,
(p_i)_+ \in g_i P$. These $g_i P$ will serve as contracting subsets
for a admissible path that we will construct. We shall first verify
that consecutive $g_i P$ are distinct.
\begin{lem}\label{distneighbor}
Peripheral cosets $g_{i-1} P, g_i P$ are distinct.
\end{lem}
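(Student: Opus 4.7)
The plan is to prove $g_{i-1}P \neq g_iP$ by a short case analysis on $(\epsilon_{i-1}, \epsilon_i) \in \{\pm 1\}^2$, after first obtaining a clean description of each coset $g_iP$. Setting $u_i := h_1 t^{\epsilon_1} \cdots h_i t^{\epsilon_i}$ (with $u_0 = 1$), a direct unpacking of the normal path together with the identity $t = fc$ and the fact that $c \in P$ yields
\[
g_iP \;=\; \begin{cases} u_iP & \text{if } \epsilon_i = +1,\\ u_i fP & \text{if } \epsilon_i = -1.\end{cases}
\]

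Computing $g_{i-1}^{-1} g_i$ in each of the four sign combinations, I find that $g_{i-1}P = g_iP$ is equivalent to: $h_i \in P\cap H = Q$ in case $(+1,-1)$; $h_i \in P^f\cap H = Q'$ in case $(-1,+1)$ (here one uses the maximality of $Q'$ among parabolic subgroups of $H$ to identify $P^f\cap H$ with $Q'$); $h_if \in P$ in case $(+1,+1)$; and $f^{-1} h_i \in P$ in case $(-1,-1)$. The first two possibilities are immediately excluded by Britton's Lemma, as recorded at the start of Section~5. The last two both force $Hf \cap P \neq \emptyset$ (since $h_i \in H$), i.e., $f \in HP$.

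It then remains to show that $f \notin HP$, which I view as the main substantive step. Suppose for contradiction $f = h_0 p_0$ with $h_0 \in H$ and $p_0 \in P$. Then, since $p_0 Q p_0^{-1} \subset P$,
\[
Q' = fQf^{-1} = h_0(p_0 Q p_0^{-1}) h_0^{-1} \subset h_0 P h_0^{-1} = P^{h_0}.
\]
Intersecting with $H$ and using $h_0 \in H$ gives
\[
Q' \subset P^{h_0} \cap H = h_0(P\cap H) h_0^{-1} = Q^{h_0}.
\]
As $Q'$ and $Q^{h_0}$ are both maximal parabolic subgroups of $H$ with $Q' \subset Q^{h_0}$, maximality of $Q'$ forces $Q' = Q^{h_0}$, contradicting the hypothesis that $Q$ and $Q'$ are non-conjugate in $H$.

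The principal obstacle to anticipate is precisely this last step: extracting from the non-conjugacy hypothesis the global statement $f \notin HP$. Once this is in hand, the four cases fall out immediately from Britton's Lemma combined with routine coset arithmetic based on $t = fc$ and $c \in P$.
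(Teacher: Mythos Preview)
Your proof is correct and follows essentially the same approach as the paper: the mixed-sign cases $(\pm 1,\mp 1)$ are handled by Britton's Lemma, and the same-sign cases reduce to the non-conjugacy of $Q$ and $Q'$ in $H$ via the identification $P'\cap H=Q'$. Your treatment is in fact more complete than the paper's, which only writes out the cases $(\epsilon_1,\epsilon_2)=(1,-1)$ and $(1,1)$ and declares the rest ``completely analogous''; your extraction of the clean statement $f\notin HP$ and the explicit invocation of maximality of $Q'$ to get $P'\cap H=Q'$ make the argument tighter than the original.
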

\begin{proof}
In the following, we only verify the case $i=1$. The other cases are
completely analogous.

If $\epsilon_1 = 1, \epsilon_2 = {-1}$, then $\gamma = q_1 (\beta_1
p_1) q_2 (p_2^{-1} \beta_2^{-1})\ldots q_n  (\beta_n
p_n)^{\epsilon_n}$. It follows from Britton's Lemma that $\lab(q_2)
\notin Q$. Hence we see that $g_1 P, g_2 P$ are distinct.

If $\epsilon_1 = 1, \epsilon_2 = 1$, then $\gamma = q_1 (\beta_1
p_1) q_2 (\beta_2 p_2) \ldots q_n  (\beta_n p_n)^{\epsilon_n}$.
Assume that $(p_1)_- = 1$. Let $g_2 P$ be the parabolic coset in
which $p_2$ lies. Suppose, to the contrary, that $P = g_2 P$, that
is $h_2 f \in P$. By assumption, note that $P^f = P'$ and thus
$h_2^{-1} P h_2 = P'$. It follows that $h_2^{-1} Q h_2= Q'$,
contradicting the assumption that $Q, Q'$ are not conjugate in $H$.
\end{proof}

Note that the normal path is defined in $\G $. So our next step is,
before lifting each $p_i, q_i$, to truncate the extra part of
$\gamma$ lying inside $g_i P$ as follows.

\textbf{Truncating the path $\gamma$}. Given $g \in \langle H, t
\rangle$, let $$\gamma = q_1 (\beta_1 p_1)^{\epsilon_1} q_2 (\beta_2
p_2)^{\epsilon_2} \ldots q_n (\beta_n p_n)^{\epsilon_n}$$ be its
normal path. For each $g_i P$, if $q_i \cap g_i P \ne \emptyset$,
then let $z_i$ be the first point of $q_i$ such that $z_i \in g_i
P$; otherwise, let $z_i = (p_i^{\epsilon_i})_-$. In a similar way,
if $q_{i+1} \cap g_i P \ne \emptyset$, then let $w_i$ be the last
point of $q_{i+1}$ such that $w_i \in g_i P$;  otherwise, let $w_i =
(p_i^{\epsilon_i})_+$. Denote $w_0 = \gamma_-$.

Let $q_i'$ be the lift path of the segment $[w_{i-1}, z_i]_\gamma$.
Let $p'_i$ be a geodesic in $\Gx$ between $z_i$ and $w_i$. Then
$\bar \gamma = q_1' p_1' ... q_n' p_n'$ is called the
\textit{truncation} of $\gamma$.

We now carefully examine the truncation paths and show that they are
admissible paths.  Let $\card{f} = d(1, f)$.
\begin{lem}\label{hnnadm}
Given $D > 0$,  assume that $d(1, g) > D$ for any $g \in cQ$. Let
$D' = D-\kappa(H, f^{-1} P, M) - \kappa(H, P, M)$. Then the
truncation path of any element in $\langle H, t \rangle \setminus H$
is a $(D', \lambda, (\lambda+2)\card{f})$-admissible path.
\end{lem}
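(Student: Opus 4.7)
The plan is to verify, one by one, the four conditions of Definition \ref{AdmDef} for the truncation $\bar\gamma = q_1' p_1' \cdots q_n' p_n'$, with contracting subsets $g_i P \in \mathbb{X}$ and the specified parameters $(D', \lambda, (\lambda+2)|f|)$.

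First I would describe the building blocks. By construction, each $p_i'$ is a $\Gx$-geodesic with both endpoints $z_i, w_i \in g_i P$. Each $q_i'$ is the lift in $\Gx$ of the subpath $[w_{i-1}, z_i]_\gamma$, which consists of (at most) an initial $\beta$-fragment of label $f^{\pm 1}$, a subpath of the relative geodesic $q_i$, and a terminal $\beta$-fragment. Lemma \ref{liftpath} makes the lift of the $q_i$-portion a $(\lambda, 0)$-quasigeodesic; the two adjoined $\beta$-geodesics, each of length at most $|f|$, inflate the constants so that $q_i'$ is a $(\lambda, (\lambda+2)|f|)$-quasigeodesic. With this setup, conditions (1) and (4) follow almost immediately: (1) is clear from how $\bar\gamma$ alternates $p_i'$ (endpoints in $g_i P$) with $q_i'$; and (4) is the combination of Lemma \ref{distneighbor} (consecutive $g_{i-1}P \neq g_i P$) with Lemma \ref{boundinter} (distinct parabolic cosets have $\nu$-bounded intersection).

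For condition (3), the orthogonality of $q_i'$ to $g_{i-1}P$ and $g_i P$ follows from Lemma \ref{relorthogonal}: the truncation point $z_i$ is precisely the first entry of $q_i$ into $g_i P$, so the lift of that portion stays, outside a bounded initial segment, far from $g_i P$; the same holds at $w_{i-1}$ with $g_{i-1}P$. The appended $\beta$-fragments have uniformly bounded length $|f|$ and do not disrupt the orthogonality after absorbing into $\tau$.

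The main work is condition (2): $\ell(p_i') > \lambda D' + (\lambda+2)|f|$. The strategy is to bound $d(z_i, (p_i)_-)$ and $d(w_i, (p_i)_+)$ separately via relative quasiconvexity of $H$ combined with Lemma \ref{intersection}. On the $z_i$ side, the point $z_i$ lies both in $g_i P$ and on the relative geodesic $q_i$, whose endpoints sit in a translate $aH$ of $H$. A nearby transition point of the lift of $q_i$ is $M$-close to $aH$ by relative quasiconvexity. Left-translating by an element of $a$ to normalize $aH$ to $H$ (valid by left-invariance of $d$, since $h_j \in H$), the coset $g_i P$ maps to a translate whose relevant factor is $f^{-1}P$, and Lemma \ref{intersection} yields $d(z_i, (p_i)_-) \leq \kappa(H, f^{-1}P, M)$. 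A symmetric calculation on the $w_i$ side uses that the path leaves $g_i P$ after traversing the $c$-edge $p_i$ (with $c \in P$), so the corresponding normalization produces the coset $P$, giving $d(w_i, (p_i)_+) \leq \kappa(H, P, M)$. Using the hypothesis $d(1, g) > D$ for all $g \in cQ$ (so in particular $d((p_i)_-, (p_i)_+) = |c| > D$), the reverse triangle inequality yields $\ell(p_i') > D - \kappa(H, f^{-1}P, M) - \kappa(H, P, M) = D'$, which under the choice of $D$ (taken sufficiently large relative to the fixed $\lambda$ and $|f|$) gives the required $\ell(p_i') > \lambda D' + (\lambda+2)|f|$.

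The hardest part will be the translation bookkeeping in condition (2): correctly tracing through which parabolic coset ($fP$ vs. $f^{-1}P$) each endpoint projects to after normalization, keeping the two signs $\epsilon_i = \pm 1$ uniform, and handling the four sub-cases depending on whether each of $z_i, w_{i-1}$ sits on $q_i$ or at the endpoint of a $\beta$-segment. Once this is set up carefully, everything else is a routine application of the lemmas already established.
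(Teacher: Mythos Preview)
Your verification of conditions (1), (3), and (4) is essentially the same as the paper's, and your description of why $q_i'$ is a $(\lambda,(\lambda+2)|f|)$-quasigeodesic is correct.

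The gap is in condition (2). Lemma \ref{intersection} does \emph{not} give $d(z_i,(p_i)_-)\le\kappa(H,f^{-1}P,M)$. After normalizing so that $(p_i)_-=1$, the point $z_i$ lies in $N_M(f^{-1}H)\cap P$, and Lemma \ref{intersection} only yields a point $z_i'\in f^{-1}Hf\cap P=Q$ with $d(z_i,z_i')\le\kappa$. Since $Q$ is infinite, $z_i'$ may be arbitrarily far from $(p_i)_-=1$. The same applies on the $w_i$ side: one only gets $w_i'\in Q$ with $d(w_i,cw_i')\le\kappa$. Consequently the inequality you need is $d(z_i',cw_i')>D$, not merely $|c|>D$; and for this one must observe that $(z_i')^{-1}cw_i'\in QcQ=cQ$, which uses the standing hypothesis $Q^c=Q$ from Theorem \ref{hnn}. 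Only then does the full assumption ``$d(1,g)>D$ for every $g\in cQ$'' (rather than the single instance $g=c$) deliver the bound. This is exactly the computation in the paper's inequality (\ref{truncation}).

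A smaller point: the lemma is stated for \emph{given} $D>0$, so you cannot enlarge $D$ at the end to absorb the factor $\lambda$ and the additive $(\lambda+2)|f|$; the conclusion is that $\ell(p_i')>D'$, and the admissibility parameters are recorded accordingly.
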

Recall that the number $\lambda$ above is given by Lemma
\ref{liftpath} and the function $\kappa(\cdot, \cdot, \cdot)$ given
by Lemma \ref{intersection}.

\begin{proof}
Let $\gamma = q_1 (\beta_1 p_1)^{\epsilon_1} q_2 (\beta_2
p_2)^{\epsilon_2} \ldots q_n (\beta_n p_n)^{\epsilon_n}$ be the
normal path of an element in $\langle H, t \rangle$. Without loss of
generality, we consider the case that $\epsilon_1 = 1$. The case
that $\epsilon_1 = -1$ is symmetric by reversing the orientation of
$\gamma$.

Let $(p_1)_- =1$. Let $z$ be the first vertex of $q_1$ such that $z
\in P$ if it exists; otherwise let $z = (\beta_1)_+$. The relative
$M$-quasiconvexity of $H$ implies that $z \in N_M(f^{-1}H) \cap P$.
By Lemma \ref{intersection}, there is $z' \in f^{-1} Hf \cap P =
f^{-1} Q' f = Q$ such that $d(z', z) \le \kappa(H, f^{-1}P, M)$.

Let $w$ be the last vertex of $q_2$ such that $w \in P$. The
relative $M$-quasiconvexity of $H$ implies that $w \in N_M(c H) \cap
P$. By Lemma \ref{intersection}, there is $w' \in H \cap P = Q$ such
that $d(cw', w) \le \kappa(H, P, M)$. See Figure \ref{fig:fig2}.

Let $q_1'$ be the lift of the relative path $[\gamma_-, z]_\gamma$,
and $p_1'$ a geodesic between $z$ and $w$. By Lemma \ref{liftpath},
we see that $q_1'$ is a $(\lambda, (\lambda +
2)\card{f})$-quasigeodesic.

Since $z'^{-1} c w' \in Q c Q = c Q$, it follows that $d(z', cw')
>D$. Then we have

\begin{equation}\label{truncation}
\begin{array}{rl}
\len(p_1') = d(z, w) & > d(z', cw') - d(z, z') -d(w, cw') \\
&> D - \kappa(H, f^{-1}P, M) - \kappa(H, P, M).
\end{array}
\end{equation}
We continuously truncate $q_i$ to define $q_i', p_i'$ as above. Let
$\bar \gamma = q_1' p_1' ... q_n' p_n'$ be the truncation path,
where $p_i'$ are $P$-components. Moreover, the paths $q_i'$ are
$(\lambda, (\lambda + 2)\card{f})$-quasigeodesics. By Lemma
\ref{distneighbor}, we see that $\bar \gamma$ is a $(D', \lambda,
(\lambda + 2)\card{f})$-admissible path.
\end{proof}

\begin{figure}[htb] % You can use the htb to tell latex: place here if possible, if not try top of page, if not try bottom of page.
\centering \scalebox{0.8}{
\includegraphics{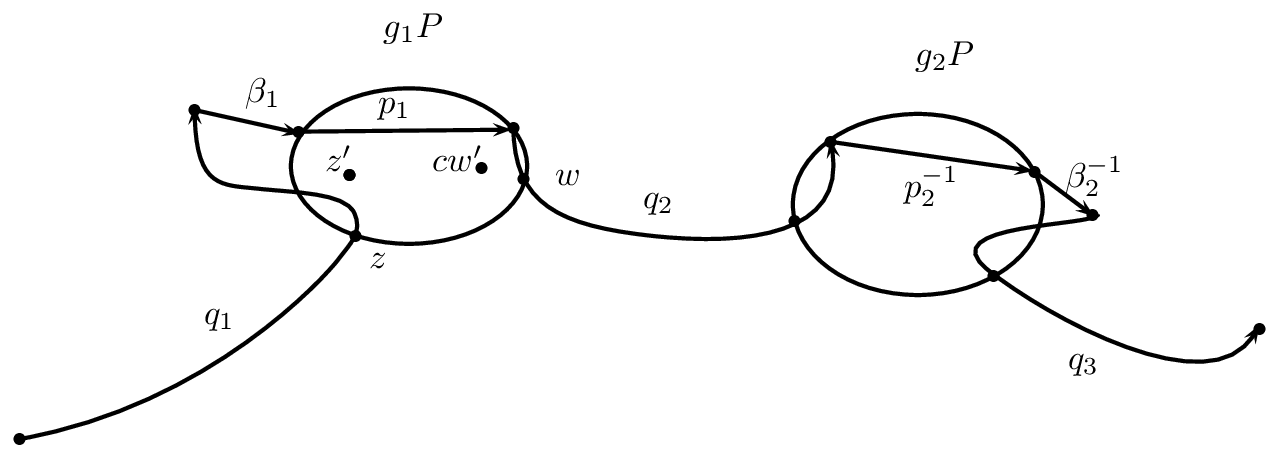} % Do not put file extension, only name
} \caption{Proof of Lemma \ref{hnnadm}} \label{fig:fig2}
\end{figure}

\subsection{Proof of Theorem \ref{hnn}}
Let $D = D(\lambda, (\lambda + 2)\card{f}), \Lambda=\Lambda(\lambda,
(\lambda + 2)\card{f})$ be the constants given by Corollary
\ref{maincor}. Define $$D(H, P, f) = D + 1 - \kappa(H, f^{-1} P, M)
- \kappa(H, P, M).$$  By Lemma \ref{hnnadm}, the truncation path of
any element in $H \star_{Q^t = Q'}$ is a $(\Lambda,
0)$-quasigeodesic in $\Gx $. Then $H \star_{Q^t = Q'} \to \langle H,
t\rangle$ is injective.

We shall now show that $\langle H, t\rangle$ is relatively
quasiconvex. Let $\bar \gamma = q_1' p_1' \ldots q_n' p_n'$ be a
truncation path obtained as above for an element in $\langle H,
t\rangle \setminus H$. Let $U = U(\Lambda, 0)$ be the constant given
by Lemma \ref{transpoints} and $L = \nu(U) + 1$. Observe that any
$(U, L)$-transition point of $\bar \gamma$ is a $(U, L)$-transition
point of either $q_i'$ or $p_i'$.

By the definition of truncation, each $q_i'$ is the lift of one of
the following types of relative paths: 1). $\beta_{i-1}^{-1} q_i
\beta_i$, 2). a subpath of either $\beta_{i-1}^{-1} q_i$, $q_i
\beta_i$ or  $q_i$. Note that $q_i$ has two endpoints in a left
$H$-coset and $\beta_i$ is of the fixed length $\card{f}$. Then any
$(U, L)$-transition point of $q_i'$ lies in a uniform neighborhood
of the associated left $H$-coset in all cases.

We now consider the subpath $p_i'$. By the analysis in the proof of
Lemma \ref{hnnadm} and the inequality (\ref{truncation}) therein, we
see that the endpoints of $p_i'$ are at most a distance $\kappa(H,
f^{-1}P, M) - \kappa(H, P, M)$ to the endpoints of a geodesic $[z',
cw']$ with label $z'^{-1} c w'  \in Q c Q = Q c$. Since $Q \subset
H$ and $z' \in Q$, any $(U, L)$-transition point of $[z', cw']$ lies
in a uniform neighborhood of the associated left $Q$-coset. Note
that $c$ is a fixed element. Then any $(U, L)$-transition point of
$p_i'$ has a uniformly bounded distance to a $(U, L)$-transition
point of $[z', cw']$. Consequently, any $(U, L)$-transition point of
$p_i'$ lies in a uniform neighborhood of the associated left
$H$-coset.

Therefore, it is verified that any $(U, L)$-transition point of
$\bar \gamma$ lies in a uniform neighborhood of $\langle H, t
\rangle$. This shows that the relative quasiconvexity of $\langle H,
t\rangle$.

We now show the second statement of Theorem \ref{hnn}.
\begin{lem}
Every parabolic subgroup in $\langle H, t \rangle$ is conjugate into
$H$.
\end{lem}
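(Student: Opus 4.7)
The plan is to follow the strategy of Lemma~\ref{amalgparabolic}. By Britton's normal form for HNN extensions, every element of $\langle H, t\rangle$ is either $\langle H, t\rangle$-conjugate into $H$, or $\langle H, t\rangle$-conjugate to a cyclically reduced word $g = h_1 t^{\epsilon_1}\cdots h_n t^{\epsilon_n}$ with $n\ge 1$. Since being parabolic in $G$ is preserved under $G$-conjugation (in particular under $\langle H, t\rangle$-conjugation), it suffices to show that such a cyclically reduced $g$ of positive $t$-length is not conjugate in $G$ into any $P^* \in \mathbb P$. Suppose, for contradiction, that $g = f' p (f')^{-1}$ for some $f' \in G$ and $p \in P^*$.

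For every $k \ge 1$, the power $g^k = f' p^k (f')^{-1}$ is again cyclically reduced. Let $\gamma_k$ be the truncation of its normal path; by Lemma~\ref{hnnadm} and Corollary~\ref{maincor}, $\gamma_k$ is a $(\Lambda,0)$-quasigeodesic with $\Lambda$ depending only on $H, P, f$. Let $\alpha_k$ be any geodesic in $\Gx$ with the same endpoints as $\gamma_k$. Since $\gamma_k$ has length growing linearly in $k$, $d(1, g^k)\to\infty$. The endpoints of $\alpha_k$ lie within distance $|f'|$ of $f'P^*$, so a standard projection argument using the contracting property of $f'P^*$ rules out $\alpha_k\cap N_{\mu_{1,0}}(f'P^*) = \emptyset$ for $k$ large. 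Hence there are first and last points $z,w$ of $\alpha_k$ in $N_{\mu_{1,0}}(f'P^*)$, with $[z,w]_{\alpha_k}\subset N_{\sigma(\mu_{1,0})}(f'P^*)$ by quasiconvexity (Lemma~\ref{quasiconvexity}), and with $d(z,w)\to\infty$.

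By Proposition~\ref{admissible}, $\alpha_k$ is an $R$-fellow traveller for $\gamma_k$ with $R = R(\lambda, (\lambda+2)|f|)$. Setting $U = R + \sigma(\mu_{1,0}) + \mu_{1,0}$, we obtain a subpath $\beta_k$ of $\gamma_k$ contained in $N_U(f'P^*)$ whose length tends to infinity. For $k$ sufficiently large, $\beta_k$ therefore contains two consecutive complete pieces $p_i'$ and $p_{i+1}'$ of $\gamma_k$, living in the peripheral cosets $g_i P$ and $g_{i+1} P$ respectively and satisfying $\ell(p_j') > D'$. Choosing $D$ in the statement of Theorem~\ref{hnn} large enough that $D' > \nu(U)$, where $\nu$ is the bounded intersection function of Lemma~\ref{boundinter}, the containment $p_j'\subset N_U(f'P^*)\cap g_j P$ together with $\ell(p_j') > \nu(U)$ forces $g_j P = f'P^*$ for both $j = i, i+1$. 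This contradicts the distinctness $g_i P \ne g_{i+1} P$ established in Lemma~\ref{distneighbor}.

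The main obstacle is ensuring that a single threshold $D = D(H, P, f)$ works uniformly, since the conjugator $f'$ is not known in advance. This succeeds because the quasigeodesic constants $\Lambda, R$ of the truncation paths depend only on $H, P, f$, and the functions $\mu, \sigma, \nu$ are intrinsic to the ambient relatively hyperbolic group $G$; the element $f'$ affects only how soon $\alpha_k$ enters $N_{\mu_{1,0}}(f'P^*)$, which is irrelevant once $d(z,w)\to\infty$ forces the subpath $\beta_k$ of $\gamma_k$ to contain the two consecutive pieces producing the contradiction.
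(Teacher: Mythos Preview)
Your argument is correct and follows essentially the same route as the paper's sketch: pass to large powers, use the fellow-traveller property to trap two consecutive $P$-pieces $p_i', p_{i+1}'$ of the truncation path inside a bounded neighborhood of $f'P^*$, and then invoke the $\nu$-bounded intersection of parabolic cosets together with Lemma~\ref{distneighbor} to derive a contradiction. One small imprecision: the fellow-traveller property only controls the \emph{endpoints} of the $p_i'$ along $\alpha_k$, so your claim that an entire subpath $\beta_k\subset\gamma_k$ lies in $N_U(f'P^*)$ (and hence that $p_j'\subset N_U(f'P^*)$ with your choice of $U$) is not quite justified---but this is harmless, since having both endpoints of $p_j'$ in $N_U(f'P^*)\cap g_jP$ at distance greater than $\nu(U)$ already forces $g_jP=f'P^*$, which is all you need.
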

\begin{proof}[Sketch of Proof]
Let $g \in \langle H, t \rangle \setminus H$. Similarly as Lemma
\ref{amalgparabolic}, the idea is to take sufficiently large $D$ in
Theorem \ref{hnn}, to show that $g \notin P^f$ for any $f \in G$ and
$P \in \mathbb P$.

Suppose, to the contrary,  that $g = fpf^{-1}$, where $p \in P$. Let
$\bar \gamma = q_1 p_1 q_2 p_2 \ldots q_n p_n$ be the truncation
path of $g$, where $p_i$ are $P$-components.

Let $\alpha$ be a geodesic segment with the same endpoints as
$\gamma$. By Proposition \ref{admissible}, the endpoints of each
$p_i$ lie in a uniform $R$-neighborhood of $\alpha$, where $R =
R(1,0)$.

Set $U :=\sigma(R+\sigma(\mu_{1, 0}))$ as in the proof of Lemma
\ref{amalgparabolic}. By taking a sufficiently large power of $g$,
we can assume further that there exist $p_{i-1}, p_i$ of $\bar
\gamma$ such that $p_{i-1}, p_i \subset N_U(fP)$.

Note that each $p_i$ is a $P$-component with endpoints in some
parabolic coset $g_i P$. By the $\nu$-bounded intersection of
$\mathbb X$ and $D > \nu(U)$, we obtain that $g_i P = fP$. However,
$g_{i-1} P, g_i P$ are distinct by Lemma \ref{distneighbor}. This
gives a contradiction. Hence, it is shown that $g \notin P^f$ for
any $f \in G, P \in \mathbb P$.
\end{proof}

\subsection{Proof of Corollary \ref{surface}}
Let $\{Q_1, \dots, Q_m\}$ be the conjugacy classes in $H$
representing boundary components of a compact surface $S$. As $H$
has no accidental parabolics in $G$, there exists parabolic
subgroups $P_1, \ldots, P_m$ of $G$ such that $H$ is relatively
quasiconvex in $G$ and $Q_i = H \cap P_i$ are parabolic subgroups in
$H$.

By Theorem 1.3 in \cite{MarPed2}, there exists a constant $D_1=D(H,
P_1)$ such that the following holds. Let $pQ_1$ be such that
$\forall g \in pQ_1, d(1, g)> D_1$. Then we have that  $H
\star_{Q_1} H^{p} = \langle H, H^{p}\rangle$ is relatively
quasiconvex in $G$.

Note that $Q_i$ are all cyclic and $P_i$ are of rank at least two.
Then there exists $p_1 \in P_1$ such that any elements in $p_1 Q_1$
has length bigger then $D_1$. This implies that $H_1^+ = H
\star_{Q_1} H^{p_1} = \langle H, H^{p_1}\rangle$ is relatively
quasiconvex. Moreover, the complete set of conjugacy classes of
parabolic subgroups in $H_1^+$ is $\{Q_1, Q_2, Q_2^{p_1}, \ldots,
Q_m, Q_m^{p_1}\}$.

Apply Theorem \ref{hnn} to $H_1^+, P_2$ and $p_1$. Let $D_2 =
D(H_1^+, P_2, p_1)$ be the constant given by Theorem \ref{hnn}. As
$Q_2$ is of infinite index in $P_2$, there is an element $p_2 \in
P_2$ such that any element in $p_2 Q_2$ is of length bigger then
$D_2$. Let $t = p_1 p_2$. It follows that $H_2^+ =\langle H_1^+, t
\rangle$ is relatively quasiconvex. Moreover, the complete set of
conjugacy classes of parabolic subgroups in $H_2^+$ are $\{Q_1, Q_2,
\ldots, Q_m, Q_m^{p_m}\}$.

After a finitely many steps, we obtain that $H_m^+$ is relatively
quasiconvex and its conjugacy classes of parabolic subgroups are
$\{Q_1, Q_2, \ldots, Q_m\}$. On the other hand, one sees that
$H_m^+$ is isomorphic a closed surface group. The proof is complete.

\begin{rem}
Note that the constructed surface subgroup $H_m^+$ has accidental
parabolics in $G$.
\end{rem}

\section{Separability of double cosets}

Suppose $G$ is hyperbolic relative to a collection of slender LERF
groups. Note that every subgroup of a slender LERF group is
separable.

The following result is shown in \cite{ManMar} by a much involved
proof. We here provide simpler proof using normal paths constructed
in Section 3.
\begin{lem}\label{sepafull}
Suppose $G$ is hyperbolic relative to a collection of slender LERF
groups. Let $H$ be relatively quasiconvex in $G$ and an element $g
\in G \setminus H$. Then there exists a fully quasiconvex subgroup
$H^+$ such that $H \subset H^+$ and $g \notin H^+$.
\end{lem}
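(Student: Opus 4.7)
The plan is to enlarge $H$ iteratively by absorbing finite-index subgroups of ``bad'' maximal parabolic subgroups of $G$, one conjugacy class at a time. Call a conjugacy class of maximal parabolic subgroups of $H$ \emph{bad} if it has a representative $Q = H \cap P^f$ with $[P^f : Q] = \infty$; since $H$ is relatively quasiconvex, only finitely many such classes exist. If there are none, $H$ is already fully quasiconvex and we set $H^+ := H$.

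For the inductive step, fix a bad class with representative $Q = H \cap P^f$. Apply Theorem \ref{freeproduct} with $H$ and the fully quasiconvex maximal parabolic $P^f$ playing the roles of the theorem's $H$ and $K$; this produces a constant $D$, and Lemma \ref{inject1} provides the associated normal-path quasigeodesic constant $\Lambda$. By the Remark following Theorem \ref{freeproduct}, since the theorem's $K$ is a maximal parabolic, the length hypothesis need only be verified on $\dot K \setminus C$. I therefore seek $K \subset P^f$ of finite index containing $Q$ such that
\begin{enumerate}
\item[(i)] $d(1, k) > D$ for every $k \in K \setminus Q$, and
\item[(ii)] $K$ contains no element of $P^f \setminus Q$ of length at most $\Lambda \cdot d(1, g)$ in $\Gx$.
\end{enumerate}
Since balls in $\Gx$ are finite, each of (i) and (ii) asks to separate $Q$ from a finite subset of $P^f \setminus Q$. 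As $P^f$ is slender LERF, every subgroup of $P^f$ is separable in $P^f$, so intersecting finitely many finite-index overgroups of $Q$ in $P^f$ produces such a $K$.

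Taking $\dot H := H$ and $\dot K := K$ in Theorem \ref{freeproduct} gives $\langle H, K \rangle = H \star_Q K$, relatively quasiconvex, with every parabolic subgroup conjugate into $H$ or $K$; in particular the bad class $[Q]$ is absorbed into the fully quasiconvex class $[K]$ (a finite-index subgroup of the maximal parabolic $P^f$) while the remaining bad classes of $H$ survive unchanged. To verify $g \notin \langle H, K \rangle$: since $g \notin H$, any normal form of $g$ in the amalgam must contain a $K$-syllable $k_i \in K \setminus Q$. By Lemma \ref{inject1} the normal path for $g$ is a $(\Lambda, 0)$-quasigeodesic in $\Gx$, so $d(1, k_i) \le \Lambda \cdot d(1, g)$, contradicting condition (ii). Iterating this step finitely many times exhausts the bad classes and yields the desired $H^+$.

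The main obstacle is fulfilling conditions (i) and (ii) simultaneously with a single finite-index $K$: one needs both the combination-theorem length bound and a finite list of forbidden elements encoding the avoidance of $g$. Slender-LERF-ness of $P^f$ supplies separability of arbitrary subgroups of $P^f$, and finiteness of balls in $\Gx$ together with the quasigeodesic normal-path lemma ensures the forbidden list stays finite; this is the crucial interaction between the hypotheses.
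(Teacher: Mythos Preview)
Your proposal is correct and follows essentially the same approach as the paper's proof: iteratively amalgamate $H$ with a finite-index subgroup of a bad maximal parabolic, using separability in the (slender LERF) parabolic to choose that subgroup so that all nontrivial coset representatives are long enough both for the combination theorem and, via the quasigeodesic normal path, to force $d(1,g)$ to be too small if $g$ were in the amalgam. Your write-up is in fact more explicit than the paper's---you separate out conditions (i) and (ii) and invoke the constant $\Lambda$ from Lemma~\ref{inject1} to bound the $K$-syllable length, whereas the paper bundles both into ``sufficiently large word length'' and leaves the termination of the iteration implicit.
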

\begin{proof}
Let $g \in G \setminus H$. Let $P \in \mathbb P^G$ such that
$\card{H \cap P} =\infty$ and $[P : H \cap P] < \infty$. Since $H
\cap P$ is separable in $P$, we can combine $H$ with a finite index
subgroup $\dot P \subset P$, where each element in $\dot P \setminus
H$ has sufficiently large word length. Thus by Lemma \ref{inject1},
so does any element in $\langle H, \dot P \rangle \setminus (H \cap
P)$. As a consequence, this implies that $g \notin \langle H, \dot P
\rangle$ (otherwise, it would lead to a contradiction that $g \in H
\cap P$). After finitely many steps, we can get a fully quasiconvex
subgroup containing $H$ but avoiding $g$.
\end{proof}

\begin{cor}\label{relsepa}
Under the assumption of Lemma \ref{sepafull}, if $G$ is separable on
fully quasiconvex subgroups, then every relatively quasiconvex
subgroup is separable.
\end{cor}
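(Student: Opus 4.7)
The plan is to reduce the separability of an arbitrary relatively quasiconvex subgroup $H$ to the assumed separability of fully quasiconvex subgroups, using Lemma \ref{sepafull} as the bridge.

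First, I would fix an element $g \in G \setminus H$ and aim to produce a finite-index subgroup of $G$ that contains $H$ but omits $g$. The first step applies Lemma \ref{sepafull} directly to the pair $(H,g)$: it supplies a fully quasiconvex subgroup $H^+ \le G$ with $H \subset H^+$ and $g \notin H^+$. This is the key sandwiching step, and all the real work (constructing $H^+$ by iteratively combining $H$ with finite-index subgroups of suitable parabolics, and verifying that $g$ stays outside at each stage) has already been carried out in the proof of Lemma \ref{sepafull}.

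Second, I would invoke the hypothesis that $G$ is separable on fully quasiconvex subgroups applied to $H^+$. Unpacking the definition of separability, since $g \notin H^+$, there exists a homomorphism $\phi : G \to F$ to a finite group $F$ such that $\phi(g) \notin \phi(H^+)$. Then the subgroup $K := \phi^{-1}(\phi(H^+))$ is of finite index in $G$, contains $H^+$ (hence contains $H$), and excludes $g$. Equivalently, $\phi$ separates $g$ from $H$ as well, since $\phi(H) \subset \phi(H^+)$ and $\phi(g) \notin \phi(H^+)$. As $g \in G \setminus H$ was arbitrary, $H$ is separable.

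There is no serious obstacle here; the statement is essentially a formal consequence of Lemma \ref{sepafull} together with the separability hypothesis. The only subtlety worth mentioning is to make sure the separability hypothesis is used in its correct form, namely that each fully quasiconvex subgroup is closed in the profinite topology of $G$, which gives precisely the finite-index subgroup $K$ needed to separate $g$ from $H$.
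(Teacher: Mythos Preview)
Your proposal is correct and is exactly the intended argument: the paper states the corollary without proof, treating it as an immediate consequence of Lemma~\ref{sepafull} together with the separability hypothesis on fully quasiconvex subgroups. Your two-step outline (sandwich $H \subset H^+$ with $g \notin H^+$, then separate $g$ from $H^+$) is the straightforward unpacking of this.
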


Suppose $H$ is relatively quasiconvex and $K$ fully quasiconvex in
$G$. Let $C = H\cap K$.

\begin{proof}[Proof of Theorem \ref{doublecoset}]
Since $H', K'$ are separable, $C'=H'\cap K'$ is separable. Given any
$g \notin H'K'$, it suffices to find a closed set separating $H'K'$
and $g$.

We first consider the case that $C' = C$.

Let $m=d(1,g)$. Let $\Lambda = \Lambda(H, K), D = D(H, K)$ be the
constants given by Lemma \ref{inject1}, and $D_1 = \max(m\Lambda,
D)$. By the separability of $C$, there exists finite index subgroups
$\dot H, \dot K$ of $H', K'$ respectively such that $d(1, f) > D_1$
for any $f \in \dot H \cup \dot K \setminus C$. By Theorem
\ref{freeproduct}, $\langle \dot H, \dot K \rangle$ is relatively
quasiconvex. Thus $\langle \dot H, \dot K \rangle$ is separable by
Corollary \ref{relsepa}.

We now claim that $g \notin H'\langle \dot H , \dot K \rangle K'$.
Argue by way of contradiction. Suppose that $g \in H' \langle \dot H
, \dot K \rangle K'$. Then there exists $h \in H', k \in K'$ such
that $h g k \in \langle \dot H, \dot K \rangle$. Since $g \notin
H'K'$, it follows that $h g k$ cannot be written as $h' k'$, where
$h' \in \dot H, k' \in \dot K$. Hence, $g$ has the following form $g
= h_0 k_1 h_1 \ldots k_n h_n k_{n+1}$ for $n \ge 1$, where $h_0 \in
H', k_{n+1} \in K', h_i \in \dot H\setminus C, k_i \in \dot
K\setminus C$. Let $\gamma$ be the normal path of $g$. Since
$\gamma$ is a $(\Lambda, 0)$-quasigeodesic, we have $d(1, g) >
\Lambda^{-1} D_1 > m$. This is a contradiction. It follows that $g
\notin H'\langle \dot H , \dot K \rangle K'$.

Since $\dot H, \dot K$ are of finite index in $H', K'$ respectively,
there exists finitely many $h_i \in H', k_j \in K'$ such that $\cup
h_i \dot H = H', \cup \dot K' k_j = K$. Observe that $H'K'$ is
contained in  $H'\langle \dot H , \dot K \rangle K' = \bigsqcup h_i
\langle \dot H,  \dot K \rangle k_j $, which is a finite union of
closed sets. This shows that $'HK'$ is separable.

Let's now turn to the general case that $C'$ is of finite index in
$C$.

Denote by $\{1, c_1, c_2, ..., c_n \}$ the set of left coset
representatives of $C'$ in $C$. In virtue of separability of $H',
K'$, it is easy to see that there are two finite index subgroups
$\hat H, \hat K$ in $H, K$ respectively such that $\hat H \cap  \hat
K = C'$. Note that $\hat K$ fully quasiconvex. Applying the special
case to $H_1 = H' \cap \hat H, K_1 = K' \cap \hat K$, we see
$H_1K_1$ is separable. Since $H_1, K_1$ are of finite index in $H',
K'$ respectively, it follows that $H'K'$ is separable.
\end{proof}

\begin{proof}[Proof of Corollary \ref{dblparacoset}]
Let $H, K$ be two parabolic subgroups. If $H, K$ lie in different
maximal parabolic subgroups, then $H\cap K$ is finite. The
conclusion follows from Theorem \ref{doublecoset}. Now let $H, K$ be
in the same $P \in \mathbb P$. Since $P$ is virtually abelian, it
follows that the double coset of any two subgroups in $P$ is
separable in $P$ and thus in $G$.

Note that by a result of Osin \cite{Osin2}, any hyperbolic element
$g$ in $G$ is contained in a virtually cyclic subgroup $E(g)$ such
that $G$ is hyperbolic relative to $\mathbb P \cup \{E(g)\}$. Hence
it follows by the same argument as above that the double coset of
any two cyclic subgroups is separable.
\end{proof}

%\appendix
%\section{Groups acting on spaces with projective systems}
%Let $G$ be a group acting a geodesic space $Y$ with an $(\epsilon,
%\mu)$-projective system $\mathbb X$ such that $\mathbb X$ is
%$G$-equivariant and $G$-finite. For each $X \in \mathbb X$, let
%$G_X$ be a setwise stabilizer of $X$ in $G$. Assume that $G_X$ is
%conjugate to $G_X'$ if and only if there is $g \in G$ such that $X'
%= gX$.

%Define $E(X) = \{g \in G: d_H(X, g X) < \infty\}$. Note that $E(X)$
%is a group.

%\begin{thm}
%Let $G$ be a group acting a geodesic space $Y$ with an $(\epsilon,
%\mu)$-projective system $\mathbb X$ such that $\mathbb X$ is
%$G$-equivariant and $G$-finite. If $\mathbb X$ has bounded
%intersection, then $E(X)$ is hyperbolically embedded into $G$ for
%each $X \in \mathbb X$.
%\end{thm}

%\begin{lem}
%If $\mathbb X$ has bounded intersection, then for each $g \in G
%\setminus E(X)$, $gX$ and $X$ have bounded intersection.
%\end{lem}

%\begin{lem}
%Suppose $G$ acts on $Y$ properly. Assume that for each $X \in
%\mathbb X$,  $G_X$ acts on $X$ cocompactly. Then  $[E(X): G_X]$ is
%finite.
%\end{lem}

% For alignments use AmS-LaTeX constructions not \eqnarray.

% For acknowledgements use \ack or \acks immediately before the references

\bibliographystyle{amsplain}
\bibliography{bibliography}

\end{document}